 \newtheorem{thm}{Theorem}[section]
 \newtheorem{cor}[thm]{Corollary}
 \newtheorem{lem}[thm]{Lemma}
 \theoremstyle{definition}
 \newtheorem{defn}[thm]{Definition}
 \theoremstyle{remark}
 \newtheorem{assumption}[thm]{Assumption}
 \newtheorem*{ex}{Example}
 \numberwithin{equation}{section}
\begin{document}

%
%
%
%
%
%
%
%
%

\title[Discrete diffusive SIR epidemic model]
 {Traveling wave solutions for a class of discrete diffusive SIR epidemic model}

\author[Zhang]{Ran Zhang}

\address{%
School of Mathematics\\
Harbin Institute of Technology\\
Harbin 150001 Heilongjiang\\
People’s Republic of China}

\email{16B312002@hit.edu.cn}

\thanks{The authors were supported by Natural Science Foundation of China (11871179; 11771374).}
\author[Wang]{Jinliang Wang}
\address{%
School of Mathematical Sciences\\
Heilongjiang University\\
Harbin 150080 Heilongjiang\\
People's Republic of China}
\email{jinliangwang@hlju.edu.cn}
\author[Liu]{Shengqiang Liu}
\address{%
School of Mathematical Sciences\\
Tiangong University\\
Tianjin 300387 Tianjin\\
People's Republic of China}
\email{sqliu@tiangong.edu.cn}
\subjclass{35C07, 35K57, 92D30}

\keywords{Lattice dynamical system, Schauder's fixed point theorem, Traveling wave solutions, Diffusive Epidemic model, Lyapunov functional}

\date{January 1, 2004}

\begin{abstract}
This paper is concerned with the conditions of existence and nonexistence of traveling wave solutions (TWS) for a class of discrete diffusive epidemic models. We find that the existence of TWS is determined by the so-called basic reproduction number and the critical wave speed: When the basic reproduction number $\Re_0>1$, there exists a critical wave speed $c^*>0$, such that for each $c \geq c^*$ the system admits a nontrivial TWS and for $c<c^*$ there exists no nontrivial TWS for the system. In addition, the boundary asymptotic behaviour of TWS is obtained by constructing a suitable Lyapunov functional and employing Lebesgue dominated convergence theorem. Finally, we apply our results to two discrete diffusive epidemic models to verify the existence and nonexistence of TWS.
\end{abstract}

\maketitle
\section{Introduction}\label{sec:Inc}
\def\d {{\rm d}}
In a pioneering work, the classical Susceptible-Infectious-Recovered (SIR) epidemic model was introduced by Kermack and McKendrick \cite{KermackMcKendrickPRLSA1927} in 1927. Since then,
epidemic modeling has became one of the most important tools to study  spread of the disease, we refer readers to a good survey \cite{Hethcote2000} on this topic. In order to understand the geographic spread of infectious disease, the spatial
effect would give insights into disease spread and control. Due to this fact, epidemic models with spatial diffusion have been studied for decades. Considering spatial effects, Hosono and Ilyas \cite{HosonoIlyasMMMAS1995} proposed and studied the following SIR epidemic model with diffusion:
\begin{equation}
\label{PreModel1}\left\{
\begin{array}{ll}
\vspace{2mm}
\displaystyle   \frac{\partial S(x,t)}{\partial t}= d_1 \Delta S(t,x) -  \beta S(x,t)I(x,t),&\ x\in \mathbb{R},\ t>0,\\
\displaystyle   \frac{\partial I(x,t)}{\partial t}= d_2 \Delta I(t,x) +  \beta S(x,t)I(x,t) - \gamma I(x,t),&\ x\in \mathbb{R},\ t>0,\\
\end{array}\right.
\end{equation}
with initial conditions
\[
S(x,0) = S_0(x),\ \ I(x,0) = I_0(x) >0,
\]
where $S(x,t)$ and $I(x,t)$ denote the densities of susceptible and infected individuals at position $x$ and time $t$, respectively; $d_i(i=1,2)$ are the diffusion rates of each compartments; $\beta$ denotes the transmission rate between susceptible and infected individuals; $\gamma$ is the remove rate. All parameters in system (\ref{PreModel1}) are assumed to be positive. The authors proved the existence of traveling wave solutions of system (\ref{PreModel1}) with a constant speed when $d_2 = 1$.
In recent years,  many researchers have paid attention to study the traveling wave solutions for diffusive epidemic models (see, for example, \cite{BaiZhangCNSNS2015,DucrotMagalNon2011,FuJMAA2016,HeTsaiZAMP2019,LamWangZhangSIAMJMA2018,LiLiYangAMC2014,LiXuZhangDCDS2017,TianYuanSCM2017,WangMaDCDSB2018,ZhaoWangRuanNonlinearity2017,ZhangXuAMC2013} and references therein).

However, there are relatively few works on epidemic models with discrete spatial structure. In contrast to continuous media, lattice dynamical systems is more realistic in describing the discrete diffusion (for example, patch environment \cite{SanWangJMAA2019}). Lattice dynamical systems are systems of ordinary differential equations with a discrete spatial structure.
Such systems arise from practical backgrounds, such as biology \cite{WengHuangWuIMAJAM2003,FangWeiZhaoPRSAMPES2010,WuWengRuanEJAM2015,YangZhangSCM2018,Han2019},  chemical reaction \cite{KapralJMC1991,ErneuxNicolisPD1993} and material science \cite{BatesChmajARMA1999,HallareVleckSIAMADS2011}.
In a recent paper \cite{FuGuoWuJNCA2016}, Fu et al. studied the existence of traveling wave solutions for a lattice dynamical system arising in a discrete diffusive epidemic model:
\begin{equation}
\label{PreModel2}\left\{
\begin{array}{l}
\vspace{2mm}
\displaystyle   \frac{\d S_n(t)}{\d t}= [S_{n+1}(t) + S_{n-1}(t) - 2S_n(t)] - \beta S_n(t)I_n(t),\ \ n\in\mathbb{Z},\\
\displaystyle   \frac{\d I_n(t)}{\d t}= d[I_{n+1}(t) + I_{n-1}(t) - 2I_n(t)] + \beta S_n(t)I_n(t)  - \gamma I_n(t),\ \ n\in\mathbb{Z},\\
\end{array}\right.
\end{equation}
where $S_n(t)$ and $I_n(t)$ denote the populations densities of susceptible and infectious individuals at niche $n$ and time $t$, respectively; $1$ and $d$ denote the random migration coefficients for susceptible and infectious individuals, respectively; $\beta$ is the transmission coefficient between susceptible and infectious individuals; $\gamma$ is the recovery rate of infectious individuals.
Note that system \eqref{PreModel2} is a spatially discrete version of system \eqref{PreModel1}. It was proved in \cite{FuGuoWuJNCA2016} that the conditions of existence and nonexistence of traveling wave solution for system \eqref{PreModel2} are determined by a threshold number and the critical wave speed $c^*$. If the threshold number is greater than 1, then there exists a traveling wave solution for any $c>c^*$ and there is no traveling wave solutions for $c<c^*$. Also, the non-existence of traveling wave solutions for the threshold number is less than 1 was derived. Furthermore, Wu \cite{WuJDE2017} studied the existence of traveling wave solutions with critical speed $c=c^*$ of system \eqref{PreModel2}. In \cite{ZhangWuIJB2019} and \cite{ZhouSongWeiJDE2019}, two models with saturated incidence rate are considered, and they also investigated the existence and nonexistence of traveling wave solutions. By introducing the constant recruitment, Chen et al. \cite{ChenGuoHamelNon2017} studied the traveling wave solutions for the following discrete diffusive epidemic model:
\begin{equation}
\label{PreModel}\left\{
\begin{array}{l}
\vspace{2mm}
\displaystyle   \frac{\d S_n(t)}{\d t}= [S_{n+1}(t) + S_{n-1}(t) - 2S_n(t)] + \mu -  \beta S_n(t)I_n(t) - \mu S_n(t),\ \ n\in\mathbb{Z},\\
\displaystyle   \frac{\d I_n(t)}{\d t}= d[I_{n+1}(t) + I_{n-1}(t) - 2I_n(t)] + \beta S_n(t)I_n(t)  - (\gamma+\mu) I_n(t),\ \ n\in\mathbb{Z},\\
\end{array}\right.
\end{equation}
where $\mu$ is the input rate of the susceptible population, meanwhile, the death rates of susceptible and infectious individuals are also assumed to be $\mu$. In \cite{ChenGuoHamelNon2017}, the authors showed that the existence of traveling wave solutions connecting the disease-free equilibrium to the endemic equilibrium, but it still remains open whether the traveling wave solutions converge to the endemic equilibrium at $+\infty$.
As explained in \cite{ChenGuoHamelNon2017}, the main difficulties come from the fact that (\ref{PreModel})  is a system and is non-local.
In fact, the traveling wave solutions of \eqref{PreModel2} and \eqref{PreModel} are totally different: The disease will always die out for the system like \eqref{PreModel2} without constant recruitment, that is, $I$ tends to 0 as $\xi\rightarrow\pm\infty$, where $\xi=n+ct$ is the wave profile to be introduced in the next section; However, for the diffusive model with positive constant recruitment, it is more likely to get that $I(\xi)\rightarrow0$ as $\xi\rightarrow-\infty$ and $I(\xi)\rightarrow I^*$ as $\xi\rightarrow+\infty$, where $I^*$ is the positive endemic equilibrium (see \cite{LiLiYangAMC2014} for nonlocal diffusive epidemic model; \cite{FuJMAA2016} for random diffusive epidemic model). Therefore, it naturally   raises a question:   For discrete diffusive systems, does the traveling wave solutions converge to the endemic equilibrium as $\xi\rightarrow+\infty$? This constitutes our first motivation of the present paper.

Our second motivation is the nonlinear incidence rate which plays a critical role in the epidemic modeling~\cite{AndersonMay1991}, for the discrete diffusive systems with nonlinear incidence rate, will the traveling wave solutions still converge to the endemic equilibrium
as  $\xi\rightarrow+\infty$? Traditionally, the incidence rate of an infectious disease in most of the literature is assumed to be of mass action form $\beta SI$~\cite{AndersonMay1991}. Yet  the disease transmission process is generally unknown~\cite{KorobeinikovMainiMMB2005}, some nonlinear incidence rates  have been introduced and studied, for example, the saturated incidence rate with $f(I) = \frac{I}{1+\alpha I}$ by \cite{CapassoSerioMB1978}, the saturated nonlinear incidence rate with $f(I) = \frac{I}{1+\alpha I^p}(0<p<1)$ by \cite{LiuLevinIwasaJMB1986}, and so on. For more general cases, Capasso et al.~\cite{CapassoSerioMB1978} considered a more general incidence rate with the form $Sf(I)$. It is seems that the general nonlinear incidence rate could bring nontrivial challenges in analysis. Therefore, it is of great significance to study the convergence property of traveling wave solutions of the system with nonlinear incidence rate.

In this paper, we consider a discrete diffusive SIR epidemic model with general nonlinear incidence rate. The main model of this paper is formulated as the following system:
\begin{equation}
\label{PreModel3}\left\{
\begin{array}{l}
\vspace{2mm}
\displaystyle   \frac{\d S_n(t)}{\d t}= d_1[S_{n+1}(t) + S_{n-1}(t) - 2S_n(t)] + \Lambda - \beta S_n(t)f(I_n(t)) - \mu_1 S_n(t),\ \ n \in \mathbb{Z},\\
\vspace{2mm}
\displaystyle   \frac{\d I_n(t)}{\d t}= d_2[I_{n+1}(t) + I_{n-1}(t) - 2I_n(t)] + \beta S_n(t)f(I_n(t))  - \gamma I_n(t) - \mu_2 I_n(t),\ \ n \in \mathbb{Z},\\
\displaystyle   \frac{\d R_n(t)}{\d t}= d_3[R_{n+1}(t) + R_{n-1}(t) - 2R_n(t)] + \gamma I_n(t) - \mu_1 R_n(t),\ \ n \in \mathbb{Z},
\end{array}\right.
\end{equation}
where $S_n(t)$, $I_n(t)$ and $R_n(t)$ denote the densities of susceptible, infectious and removed individuals at niche $n$ and time $t$, respectively;
$d_i(i=1,2,3)$ is the random migration coefficients for each compartments; $\Lambda$ is the input rate of susceptible individuals.
The biological meaning of other parameters are the same as in model \eqref{PreModel}.
This paper aims to study the existence and convergence property of traveling wave solutions of model (\ref{PreModel3}), and one of our results will answer the open problem proposed in \cite{ChenGuoHamelNon2017}.

Since $R_n(t)$ is decoupled from other equations and denote $\mu_2 = \gamma + \mu_1$, then we only need to study the following system:
\begin{equation}
\label{Model}\left\{
\begin{array}{l}
\vspace{2mm}
\displaystyle   \frac{\d S_n(t)}{\d t}= d_1[S_{n+1}(t) + S_{n-1}(t) - 2S_n(t)] + \Lambda - \beta S_n(t)f(I_n(t)) - \mu_1 S_n(t),\ \ n \in \mathbb{Z},\\
\displaystyle   \frac{\d I_n(t)}{\d t}= d_2[I_{n+1}(t) + I_{n-1}(t) - 2I_n(t)] + \beta S_n(t)f(I_n(t))  - \mu_2 I_n(t),\ \ n \in \mathbb{Z}.
\end{array}\right.
\end{equation}
We make the following assumptions on function $f$.

\begin{assumption}\label{Assum}
Assume the function $f$ satisfying
\begin{description}
  \item[(A1)] $f(I)\geq 0$ and $f'(I) > 0$ for all $I\geq0$, $f(I) = 0$ if and only if $I = 0$.
  \item[(A2)] $\frac{f(I)}{I}$ is continuous and monotonously non-increasing for all $I\geq0$ and $\lim\limits_{I\rightarrow 0^+}\frac{f(I)}{I}$ exists.
\end{description}
\end{assumption}

 The  conditions of  Assumption \ref{Assum} are   satisfied in all the following specific incidence rates:
\begin{enumerate}
  \item[(i)] the bilinear incidence rate with $f(I)=I$ (see \cite{AndersonMay1991});
  \item[(ii)] the saturated incidence rate with $f(I) = \frac{I}{1+\alpha I}$ (see \cite{CapassoSerioMB1978});
  \item[(iii)] the saturated nonlinear incidence rate with $f(I) = \frac{I}{1+\alpha I^p}$, where $\alpha>0$ and $0<p<1$ (a special case in \cite{LiuLevinIwasaJMB1986}, see also \cite{MuroyaKuniyaEnatsuDCDSB2015});
  \item[(iv)]  the nonlinear incidence rate with $f(I) = \frac{I}{1+kI+\sqrt{1+2kI}}$ (see \cite{HeesterbeekMetzJMB1993,ThiemeJDE2011});
  \item[(v)]  the nonlinear incidence rate with $f(I) = \frac{I}{(\epsilon^\alpha + I^\alpha)^\gamma}$, where $\epsilon, \alpha, \gamma>0$ and $\alpha\gamma<1$  (see \cite{ThiemeJDE2011});
  \item[(vi)] the nonlinear incidence rate for pathogen transmission in infection of insects with $f(I) = k \ln \left(1+\frac{\nu I}{k}\right)$, which could be described by epidemic model (see \cite{BriggsGodfray1995}).
\end{enumerate}

Hence, system (\ref{Model}) covers many models as special cases. Now, we introduce some results on the system  (\ref{Model})  without migration, which takes the form as:
\begin{equation}
\label{ODEModel}\left\{
\begin{array}{l}
\vspace{2mm}
\displaystyle   \frac{\d S(t)}{\d t}= \Lambda -  \beta S(t)f(I(t)) - \mu_1 S(t),\\
\displaystyle   \frac{\d I(t)}{\d t}= \beta S(t)f(I(t)) - \mu_2 I(t).
\end{array}\right.
\end{equation}
It is well-known that the global dynamics of \eqref{ODEModel} is completely determined by the basic reproduction number $\Re_{0} = \frac{\beta S_0 f'(0)}{\mu_2}$ (see \cite{KorobeinikovBMB2006}): that is, if the number is less than unity, then the disease-free equilibrium $E_0=(S_0,0)=(\Lambda/\mu_1,0)$ is globally asymptotically stable, while if the number is greater than unity, then a positive endemic equilibrium $E^*=(S^*,I^*)$ exists and it is globally asymptotically stable, where $E^*$ satisfy
\begin{equation}
\label{EStar}\left\{
\begin{array}{l}
\vspace{2mm}
\displaystyle   \Lambda - \beta S^*f(I^*) - \mu_1 S^* = 0,\\
\displaystyle   \beta S^*f(I^*) - \mu_2 I^* = 0.
\end{array}\right.
\end{equation}

The organization of this paper is as follows. In Section 2, we apply Schauder's fixed point theorem to construct a family of solutions of the truncated problem. In Section 3, we show the existence and boundedness of traveling wave solutions. Further, we use a Lyapunov functional to show that the convergence of traveling wave solutions at $+\infty$. In Section 4, we investigate the nonexistence of traveling wave solutions by using two-sided Laplace transform. At last, there is an application for our general results and a brief discussion.

\section{Preliminaries}
In this section, since system \eqref{Model} does not enjoy the comparison principle, we will construct a pair upper and lower solutions and apply Schauder's fixed point theorem
to investigate the existence of traveling wave solutions of system (\ref{Model}).
Consider traveling wave solutions which can be expressed as bounded profiles of continuous variable $n+ct$ such that
\begin{equation}\label{c}
S_n(t) = S(n+ct)\ \ \textrm{and}\ \ I_n(t) =  I(n+ct).
\end{equation}
where $c$ denotes the wave speed. Let $\xi=n+ct$, then we can rewrite system (\ref{Model}) as follows:
\begin{equation}
\label{WaveEqu}\left\{
\begin{array}{l}
\vspace{2mm}
\displaystyle   cS'(\xi)= d_1J[S](\xi) + \Lambda - \mu_1 S(\xi) - \beta S(\xi)f(I(\xi)),\\
\displaystyle   cI'(\xi)= d_2J[I](\xi) + \beta S(\xi)f(I(\xi)) - \mu_2 I(\xi)
\end{array}\right.
\end{equation}
for all $\xi\in \mathbb{R}$, where $J[\phi](\xi):=  \phi(\xi+1) +  \phi(\xi-1) - 2 \phi(\xi)$.
We want to find traveling wave solutions with the following asymptotic boundary conditions:
\begin{equation}\label{Bound1}
\lim_{\xi\rightarrow-\infty}(S(\xi), I(\xi))=(S_0, 0),
\end{equation}
and
\begin{equation}\label{Bound2}
\lim_{\xi\rightarrow+\infty}(S(\xi), I(\xi))=(S^*, I^*).
\end{equation}
where $(S_0,0)$ is the disease-free equilibrium and $(S^*,I^*)$ is the positive endemic equilibrium, which have been defined in Section 1. Linearizing the second equation of system (\ref{WaveEqu}) at disease-free equilibrium $(S_0, 0)$, we have
\begin{equation}
\label{LinearModel}
c I'(\xi)= d_2J[ I](\xi) - \mu_2 I(\xi) + \beta S_0 f'(0) I(\xi).
\end{equation}
Letting $I(\xi)=e^{\lambda \xi}$ and substituting it into (\ref{LinearModel}), yields
\[
d_2[e^\lambda + e^{-\lambda} - 2] - c\lambda + \beta S_0 f'(0) - \mu_2 = 0.
\]
Denote
\begin{equation}\label{Delta}
\Delta(\lambda,c) = d_2[e^\lambda + e^{-\lambda} - 2] - c\lambda + \beta S_0 f'(0) - \mu_2.
\end{equation}
By some calculations, we have
\begin{align*}
&\Delta(0,c) = \beta S_0 f'(0) - \mu_2,\ \ \ \lim_{c\rightarrow+\infty} \Delta (\lambda,c) = -\infty,\\
&\frac{\partial \Delta(\lambda, c)}{\partial\lambda} = d_2[e^\lambda - e^{-\lambda}] - c,\ \ \ \frac{\partial \Delta(\lambda, c)}{\partial c}= -\lambda < 0,\\
&\frac{\partial^2 \Delta(\lambda, c)}{\partial\lambda^2} = d_2[e^\lambda + e^{-\lambda}] > 0,\ \ \ \frac{\partial \Delta(\lambda, c)}{\partial\lambda}\bigg|_{(0,c)} = -c < 0,
\end{align*}
for $\lambda>0$ and $c>0$. Therefore, we have the following lemma.
\begin{lem}\label{WaveSpeed}
Let $\Re_{0}>1.$ There exist $c^*>0$ and $\lambda^*>0$ such that
\[
\frac{\partial \Delta(\lambda, c)}{\partial \lambda}\bigg|_{(\lambda^*,c^*)} = 0\ \ \textrm{and}\ \ \Delta(\lambda^*,c^*) = 0.
\]
Furthermore,
\begin{description}
  \item[(i)] if $c=c^*,$ then $\Delta(\lambda, c)=0$ has only one positive real root $\lambda^*;$
  \item[(ii)] if $0<c<c^*,$ then $\Delta(\lambda, c)>0$ for all $\lambda\in(0, \lambda_M),$ where  $\lambda_M\in(0,+\infty]$;
  \item[(iii)] if $c>c^*,$ then $\Delta(\lambda, c)=0$ has two positive real roots $\lambda_1,$ $\lambda_2$ with $\lambda_1<\lambda^*<\lambda_2$.
\end{description}
\end{lem}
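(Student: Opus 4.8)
The plan is to fix the wave speed $c>0$ and analyze $\Delta(\cdot,c)$ as a one-variable strictly convex function of $\lambda$, then let its minimum value vary with $c$. For each fixed $c>0$, the computed fact $\partial^2\Delta/\partial\lambda^2 = d_2(e^\lambda + e^{-\lambda}) > 0$ shows that $\lambda \mapsto \Delta(\lambda,c)$ is strictly convex; combined with $\Delta(0,c) = \beta S_0 f'(0) - \mu_2 > 0$ (which holds precisely because $\Re_0 > 1$), with $\partial\Delta/\partial\lambda|_{(0,c)} = -c < 0$, and with $\Delta(\lambda,c) \to +\infty$ as $\lambda \to +\infty$, this means $\Delta(\cdot,c)$ decreases from a positive value at $\lambda = 0$, attains a unique interior minimum at the point $\lambda_m(c) > 0$ solving $\partial\Delta/\partial\lambda = 0$, i.e. $2 d_2 \sinh\lambda = c$, and then increases to $+\infty$. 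I would record the minimum value as $m(c) := \Delta(\lambda_m(c), c)$, and note $\lambda_m(c) = \sinh^{-1}(c/(2d_2))$ is continuous and strictly increasing with $\lambda_m(0^+)=0$ and $\lambda_m(+\infty)=+\infty$.

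Next I would track $m(c)$ as $c$ ranges over $(0,+\infty)$. Since $\lambda_m(c)$ is defined by $\partial\Delta/\partial\lambda = 0$, the chain-rule (envelope) computation gives $m'(c) = \partial\Delta/\partial c|_{(\lambda_m(c),c)} = -\lambda_m(c) < 0$, so $m$ is continuous and strictly decreasing. As $c \to 0^+$ one has $\lambda_m(c) \to 0$ and hence $m(c) \to \beta S_0 f'(0) - \mu_2 > 0$; as $c \to +\infty$ one has $\lambda_m(c) \to +\infty$, and substituting $c = 2d_2\sinh\lambda_m$ into $\Delta$ yields $m(c) = 2 d_2[\cosh\lambda_m - 1 - \lambda_m \sinh\lambda_m] + \beta S_0 f'(0) - \mu_2 \to -\infty$, since $\lambda_m \sinh\lambda_m$ dominates $\cosh\lambda_m$. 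By the intermediate value theorem there is a unique $c^* > 0$ with $m(c^*) = 0$; setting $\lambda^* := \lambda_m(c^*) > 0$ gives simultaneously $\partial\Delta/\partial\lambda|_{(\lambda^*,c^*)} = 0$ and $\Delta(\lambda^*,c^*) = 0$, which is the first assertion.

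The three cases then follow from the strict monotonicity of $m$ together with convexity in $\lambda$. For $c = c^*$ the convex function $\Delta(\cdot,c^*)$ has minimum value $0$ attained only at $\lambda^*$, so $\lambda^*$ is its unique positive root, giving (i). For $0 < c < c^*$ monotonicity gives $m(c) > m(c^*) = 0$, so $\Delta(\lambda,c) \geq m(c) > 0$ for every $\lambda$, and one may take $\lambda_M = +\infty$ in (ii). For $c > c^*$ one has $m(c) < 0$, so the convex function dips below zero and has exactly two positive roots $\lambda_1 < \lambda_m(c) < \lambda_2$; to place them relative to $\lambda^*$ I would integrate $\partial\Delta/\partial c = -\lambda$ along the segment from $c^*$ to $c$ at fixed $\lambda = \lambda^*$, obtaining $\Delta(\lambda^*,c) = \Delta(\lambda^*,c^*) - \lambda^*(c - c^*) = -\lambda^*(c-c^*) < 0$, which forces $\lambda_1 < \lambda^* < \lambda_2$ and establishes (iii). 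The main technical point to handle carefully is the asymptotic estimate $m(c) \to -\infty$ as $c \to +\infty$ (equivalently the divergence of $\lambda_m \sinh\lambda_m$), since the existence of $c^*$ rests entirely on $m$ changing sign; everything else is a routine consequence of convexity and the sign of $\partial\Delta/\partial c$.
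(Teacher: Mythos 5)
Your proof is correct and follows essentially the same route as the paper, which states the lemma directly after listing exactly the ingredients you use (strict convexity of $\Delta$ in $\lambda$, $\Delta(0,c)=\beta S_0 f'(0)-\mu_2>0$ when $\Re_0>1$, $\partial\Delta/\partial\lambda|_{(0,c)}=-c<0$, and $\partial\Delta/\partial c=-\lambda<0$); your write-up simply makes the minimum-value function $m(c)$ and the intermediate value argument explicit. The only cosmetic difference is at the step you flag as delicate: instead of the asymptotics of $\lambda_m\sinh\lambda_m$, one can get $m(c)<0$ for large $c$ more cheaply from the paper's observation that $\Delta(\lambda,c)\to-\infty$ as $c\to+\infty$ at any fixed $\lambda>0$, since $m(c)\leq\Delta(\lambda,c)$.
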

From Lemma \ref{WaveSpeed}, we have
\begin{equation}
\label{WaveSpeedRel}
\Delta(\lambda,c)\left\{
\begin{array}{l}
\vspace{2mm}
\displaystyle   >0\ \ \ {\rm for}\ \ \ \lambda<\lambda_1,\\
\vspace{2mm}
\displaystyle   <0\ \ \ {\rm for}\ \ \ \lambda_1<\lambda<\lambda_2,\\
\displaystyle   >0\ \ \ {\rm for}\ \ \ \lambda>\lambda_2.
\end{array}\right.
\end{equation}
In the following of this section, we always fix $c>c^*$ and $\Re_0>1$.

\subsection{Construction of upper and lower solutions}
\begin{defn}\label{Def}
$(S^+(\xi), I^+(\xi))$ and $(S^-(\xi), I^-(\xi))$ are called a pair upper and lower solutions of \eqref{WaveEqu} if $S^+, I^+, S^-, I^-$ satisfy
\[
\left\{
\begin{array}{l}
\vspace{2mm}
\displaystyle   d_1J[S^+](\xi) - c{S^+}'(\xi) + \Lambda - \mu_1 S^+(\xi) - \beta S^+(\xi)f(I^-(\xi)) \leq 0,\\
\vspace{2mm}
\displaystyle   d_2J[I^+](\xi) - c{I^+}'(\xi) + \beta S^+(\xi)f(I^+(\xi)) - \mu_2 I^+(\xi) \leq 0,\\
\vspace{2mm}
\displaystyle   d_1J[S^-](\xi) - c{S^-}'(\xi) + \Lambda - \mu_1 S^-(\xi) - \beta S^-(\xi)f(I^+(\xi)) \geq 0,\\
\displaystyle   d_2J[I^-](\xi) - c{I^-}'(\xi) + \beta S^-(\xi)f(I^-(\xi)) - \mu_2 I^-(\xi) \geq 0.\\
\end{array}\right.
\]
\end{defn}
Define the following functions:
\begin{equation}
\label{UpLowSolution}\left\{
\begin{array}{l}
\vspace{2mm}
\displaystyle   S^+(\xi)=S_0,\\
\displaystyle   I^+(\xi) = e^{\lambda_1 \xi},
\end{array}\right.
\ \
\left\{
\begin{array}{l}
\vspace{2mm}
\displaystyle   S^-(\xi)=\max\{S_0(1-M_1 e^{\varepsilon_1 \xi}),0\},\\
\displaystyle   I^-(\xi)=\max\{e^{\lambda_1\xi}(1-M_2e^{\varepsilon_2 \xi}),0\},
\end{array}\right.
\end{equation}
where $M_i$ and $\varepsilon_i(i=1,2)$ are some positive constants to be determined in the following lemmas.
Now we show that \eqref{UpLowSolution} are a pair upper and lower solutions of \eqref{WaveEqu}.
\begin{lem}\label{IUp}
The function $ I^+(\xi) = e^{\lambda_1 \xi}$ satisfies
\begin{equation}
\label{IUpEqu}
c {I^+}'(\xi)= d_2J[I^+](\xi) - \mu_2  I^+(\xi) + \beta S_0 f'(0) I^+(\xi).
\end{equation}
\end{lem}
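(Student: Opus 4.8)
The plan is to verify \eqref{IUpEqu} by direct substitution, exploiting the fact that $\lambda_1$ was chosen in Lemma \ref{WaveSpeed}(iii) precisely as a root of the characteristic function $\Delta(\cdot,c)$ (recall we have fixed $c>c^*$ and $\Re_0>1$, so by part (iii) the equation $\Delta(\lambda,c)=0$ has the two positive roots $\lambda_1<\lambda^*<\lambda_2$). The computation will turn out to be self-contained: I only need the definition of $I^+$, the definition of the operator $J$, and the explicit form of $\Delta$ from \eqref{Delta}.

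First I would compute each of the three ingredients appearing in \eqref{IUpEqu} for $I^+(\xi)=e^{\lambda_1\xi}$. The derivative is immediate, ${I^+}'(\xi)=\lambda_1 e^{\lambda_1\xi}$. The crucial observation is that pure exponentials are eigenfunctions of the discrete Laplacian $J$: since $J[\phi](\xi)=\phi(\xi+1)+\phi(\xi-1)-2\phi(\xi)$, one gets
\[
J[I^+](\xi)=e^{\lambda_1(\xi+1)}+e^{\lambda_1(\xi-1)}-2e^{\lambda_1\xi}
=\bigl(e^{\lambda_1}+e^{-\lambda_1}-2\bigr)e^{\lambda_1\xi}.
\]
The remaining two terms $-\mu_2 I^+(\xi)$ and $\beta S_0 f'(0)I^+(\xi)$ are already multiples of $e^{\lambda_1\xi}$.

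Next I would substitute these expressions into the right-hand side of \eqref{IUpEqu}, subtract $c\,{I^+}'(\xi)=c\lambda_1 e^{\lambda_1\xi}$, and factor out the common nonvanishing factor $e^{\lambda_1\xi}$. This collapses the whole identity to the scalar statement
\[
e^{\lambda_1\xi}\Bigl\{d_2\bigl[e^{\lambda_1}+e^{-\lambda_1}-2\bigr]-c\lambda_1+\beta S_0 f'(0)-\mu_2\Bigr\}=0 .
\]
The bracketed quantity is exactly $\Delta(\lambda_1,c)$ as defined in \eqref{Delta}, and this vanishes because $\lambda_1$ is a root of $\Delta(\cdot,c)$ by Lemma \ref{WaveSpeed}. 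Hence \eqref{IUpEqu} holds identically in $\xi$.

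There is, frankly, no substantial obstacle here: the statement is a linear identity and the proof is a one-line verification once the eigenfunction property of $J$ is noted. The only point requiring care is bookkeeping, namely ensuring that the exponent used is the smaller root $\lambda_1$ furnished by Lemma \ref{WaveSpeed}(iii) under the standing assumptions $c>c^*$ and $\Re_0>1$, so that the defining relation $\Delta(\lambda_1,c)=0$ is indeed available to close the computation.
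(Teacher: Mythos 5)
Your verification is correct and is exactly the argument the paper intends: the paper omits the proof as ``straightforward,'' and the intended computation is precisely yours --- $e^{\lambda_1\xi}$ is an eigenfunction of $J$, so the identity \eqref{IUpEqu} reduces to $e^{\lambda_1\xi}\Delta(\lambda_1,c)=0$, which holds because $\lambda_1$ is a root of $\Delta(\cdot,c)$ furnished by Lemma \ref{WaveSpeed}(iii) under the standing assumptions $c>c^*$ and $\Re_0>1$. Nothing further is needed.
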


\begin{lem}\label{SVUp}
The function $S^+(\xi)=S_0$ satisfies
\begin{equation}
\label{SVUpEqu}
c{S^+}'(\xi) \geq d_1J[S^+](\xi) + \Lambda - \mu_1 S^+(\xi) - \beta S^+(\xi)f(I^-(\xi)).
\end{equation}
\end{lem}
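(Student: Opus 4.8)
The plan is to exploit the fact that $S^+(\xi)\equiv S_0$ is constant, which collapses the differential--difference inequality in Definition~\ref{Def} into a purely pointwise algebraic inequality that can be checked directly. Concretely, the inequality to be established is equivalent, after rearranging, to the upper--solution condition
\[
d_1J[S^+](\xi) - c{S^+}'(\xi) + \Lambda - \mu_1 S^+(\xi) - \beta S^+(\xi)f(I^-(\xi)) \leq 0,
\]
so it suffices to evaluate the left-hand side and show it is nonpositive for every $\xi\in\mathbb{R}$.

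First I would compute the elementary quantities generated by a constant profile. Since $S^+(\xi)=S_0$ is independent of $\xi$, we immediately have ${S^+}'(\xi)=0$; and since $J[\phi](\xi)=\phi(\xi+1)+\phi(\xi-1)-2\phi(\xi)$, the constant profile gives $J[S^+](\xi)=S_0+S_0-2S_0=0$. Thus the left-hand side of the statement reduces to $\Lambda-\mu_1 S_0-\beta S_0 f(I^-(\xi))$. Next I would invoke the definition of the disease-free equilibrium, namely $S_0=\Lambda/\mu_1$ from~\eqref{Bound1}, which yields $\Lambda-\mu_1 S_0=\Lambda-\mu_1(\Lambda/\mu_1)=0$. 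Hence the whole expression simplifies to $-\beta S_0 f(I^-(\xi))$.

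It then remains to verify the sign, i.e.\ that $-\beta S_0 f(I^-(\xi))\le 0$, equivalently $\beta S_0 f(I^-(\xi))\ge 0$. Here I would use that $\beta>0$ and $S_0=\Lambda/\mu_1>0$ by positivity of the parameters, together with the structural hypothesis~(A1) on $f$. Since $I^-(\xi)=\max\{e^{\lambda_1\xi}(1-M_2e^{\varepsilon_2\xi}),0\}\ge 0$ by construction in~\eqref{UpLowSolution}, assumption~(A1) gives $f(I^-(\xi))\ge 0$. Therefore $\beta S_0 f(I^-(\xi))\ge 0$, so $c{S^+}'(\xi)=0\ge -\beta S_0 f(I^-(\xi))$, which is exactly the claimed inequality.

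I do not anticipate any genuine obstacle in this lemma: the only inputs are the constancy of $S^+$, the identity $S_0=\Lambda/\mu_1$, and the nonnegativity of $f$ from~(A1); notably, no restriction on the as-yet-undetermined constants $M_i,\varepsilon_i$ is required, since $S^+$ plays the role of a trivial (constant) barrier here. The substantive work on the barriers will instead appear in the lemmas verifying the lower solution $S^-$ and the profiles $I^{\pm}$, where the exponential corrections and the parameters $M_i,\varepsilon_i$ must be tuned.
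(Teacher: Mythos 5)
Your proposal is correct and is precisely the straightforward verification the paper has in mind when it omits the proof: with $S^+\equiv S_0$ one has ${S^+}'=0$ and $J[S^+]=0$, the identity $S_0=\Lambda/\mu_1$ kills $\Lambda-\mu_1 S_0$, and nonnegativity of $f(I^-)$ from (A1) gives the sign. The only trivial slip is attributing $S_0=\Lambda/\mu_1$ to \eqref{Bound1}; that identity comes from the definition of the disease-free equilibrium $E_0=(\Lambda/\mu_1,0)$ in Section 1, not from the boundary condition itself.
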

The proof of the above two lemmas are straightforward, so we omit the details.

\begin{lem}\label{LemLowS}
For each $0<\varepsilon_1<\lambda_1$ sufficiently small and $M_1>0$ large enough, the function $S^-(\xi)=\max\{S_0(1-M_1 e^{\varepsilon_1 \xi}),0\}$ satisfies
\begin{equation}\label{SLowEqu}
c{S^-}'(\xi) \leq d_1J[S^-](\xi) + \Lambda - \mu_1 S^-(\xi) - \beta S^-(\xi)f(I^+(\xi))
\end{equation}
with $\xi\neq\frac{1}{\varepsilon_1}\ln\frac{1}{M_1}:=\mathfrak{X}_1$.
\end{lem}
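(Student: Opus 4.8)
The plan is to exploit the corner point $\mathfrak{X}_1=\tfrac{1}{\varepsilon_1}\ln\tfrac{1}{M_1}$, where $S^-$ switches between its two branches, and to verify \eqref{SLowEqu} on the two regions $\xi>\mathfrak{X}_1$ and $\xi<\mathfrak{X}_1$ separately. Essentially all of the work lives on the left region; the right region is immediate, and the discontinuity of ${S^-}'$ at $\mathfrak{X}_1$ is precisely why the point $\xi=\mathfrak{X}_1$ is excluded.

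For $\xi>\mathfrak{X}_1$ one has $S_0(1-M_1e^{\varepsilon_1\xi})<0$, so $S^-(\xi)=0$ and ${S^-}'(\xi)=0$. Since $S^-\geq0$ everywhere, the nonlocal term satisfies $J[S^-](\xi)=S^-(\xi+1)+S^-(\xi-1)-2S^-(\xi)=S^-(\xi+1)+S^-(\xi-1)\geq0$. Hence the right-hand side of \eqref{SLowEqu} is at least $\Lambda>0$, while the left-hand side is $0$, and the inequality holds trivially.

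The substantive case is $\xi<\mathfrak{X}_1$, where $S^-(\xi)=S_0(1-M_1e^{\varepsilon_1\xi})=:g(\xi)\in(0,S_0)$. Here the nonlocal term is the obstacle: the shifted points $\xi\pm1$ may straddle $\mathfrak{X}_1$, so $S^-(\xi\pm1)$ need not equal $g(\xi\pm1)$, and a direct computation of $J[S^-]$ is unavailable. I would resolve this with the one-sided bound $S^-=\max\{g,0\}\geq g$, which gives $S^-(\xi+1)\geq g(\xi+1)$ and $S^-(\xi-1)\geq g(\xi-1)$; combined with the exact identity $S^-(\xi)=g(\xi)$ and $d_1>0$, this yields $J[S^-](\xi)\geq J[g](\xi)$. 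Because every other term of \eqref{SLowEqu} depends only on $S^-(\xi)=g(\xi)$, it then suffices to establish the smoothed inequality $cg'(\xi)\leq d_1 J[g](\xi)+\Lambda-\mu_1 g(\xi)-\beta g(\xi)f(I^+(\xi))$.

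To finish I would evaluate this smoothed inequality explicitly. Using $\mu_1 S_0=\Lambda$ collapses $\Lambda-\mu_1 g(\xi)$ to $\mu_1 S_0 M_1 e^{\varepsilon_1\xi}$, while $cg'(\xi)=-c\varepsilon_1 S_0 M_1 e^{\varepsilon_1\xi}$ and $J[g](\xi)=-S_0 M_1 e^{\varepsilon_1\xi}(e^{\varepsilon_1}+e^{-\varepsilon_1}-2)$. For the incidence term I would invoke Assumption \ref{Assum}: since $f(I)/I$ is non-increasing with $\lim_{I\to0^+}f(I)/I=f'(0)$ one has $f(I)\leq f'(0)I$, so with $g(\xi)\leq S_0$ and $I^+(\xi)=e^{\lambda_1\xi}$ the bound $\beta g(\xi)f(I^+(\xi))\leq\beta S_0 f'(0)e^{\lambda_1\xi}$ holds. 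Dividing by $S_0$, the inequality reduces to
\[
M_1 e^{\varepsilon_1\xi}\big[\mu_1+c\varepsilon_1-d_1(e^{\varepsilon_1}+e^{-\varepsilon_1}-2)\big]\geq\beta f'(0)e^{\lambda_1\xi}.
\]
The bracket tends to $\mu_1>0$ as $\varepsilon_1\to0$, hence is positive once $\varepsilon_1$ is small; imposing also $0<\varepsilon_1<\lambda_1$ and using $\xi<\mathfrak{X}_1$ gives $e^{(\lambda_1-\varepsilon_1)\xi}<M_1^{-(\lambda_1-\varepsilon_1)/\varepsilon_1}$, so the claim becomes $M_1^{\lambda_1/\varepsilon_1}\geq\beta f'(0)/[\,\mu_1+c\varepsilon_1-d_1(e^{\varepsilon_1}+e^{-\varepsilon_1}-2)\,]$, which holds for all $M_1$ large enough. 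The main obstacle throughout is exactly this nonlocal corner interaction: the $\max$ in the definition of $S^-$ blocks a direct computation, and the entire argument hinges on the estimate $J[S^-]\geq J[g]$ together with the compatible choices of $\varepsilon_1$ small and $M_1$ large.
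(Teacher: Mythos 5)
Your proposal is correct and follows essentially the same route as the paper: the trivial verification on $\xi>\mathfrak{X}_1$, the one-sided replacement of $J[S^-]$ by $J[g]$ on $\xi<\mathfrak{X}_1$, the bound $f(I^+)\leq f'(0)I^+$ from Assumption (A2), and the choice of $\varepsilon_1$ small so that $\mu_1+c\varepsilon_1-d_1(e^{\varepsilon_1}+e^{-\varepsilon_1}-2)>0$ followed by $M_1$ large. The only (harmless) difference is cosmetic: the paper bounds $e^{(\lambda_1-\varepsilon_1)\xi}\leq 1$ and requires $M_1\geq \beta f'(0)/[\mu_1+c\varepsilon_1-d_1(e^{\varepsilon_1}+e^{-\varepsilon_1}-2)]$, whereas you keep the sharper decay $e^{(\lambda_1-\varepsilon_1)\xi}<M_1^{-(\lambda_1-\varepsilon_1)/\varepsilon_1}$ and arrive at the equivalent condition on $M_1^{\lambda_1/\varepsilon_1}$.
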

\begin{proof}
If $\xi>\mathfrak{X}_1$, then inequality (\ref{SLowEqu}) holds since $S^-(\xi)=S^-(\xi+1)=0$ and $S^-(\xi-1)\geq0$.
If $\xi<\mathfrak{X}_1$, then
\[
S^-(\xi)=S_0(1-M_1 e^{\varepsilon_1 \xi}),\ \ S^-(\xi-1)=S_0(1-M_1 e^{\varepsilon_1 (\xi-1)})\ \ \textrm{and}\ \ S^-(\xi+1)\geq S_0(1-M_1 e^{\varepsilon_1 (\xi+1)}).
\]
From the concavity of function $f( I)$, we have
\[
f( I^+(\xi)) \leq f'(0)  I^+(\xi),
\]
thus
\begin{align*}
&\ d_1J[S^-](\xi) + \Lambda - \mu S^-(\xi) - \beta S^-(\xi)f(I^+(\xi)) - c{S^-}'(\xi)\\
\geq &\ e^{\varepsilon_1 \xi} S_0\left[-M_1 (d_1e^{\varepsilon_1} + d_1e^{-\varepsilon_1} - 2d_1 - \mu - c \varepsilon_1) - \beta_1 f\left(e^{\lambda_1\xi}\right) e^{-\varepsilon_1\xi} + \beta_1 M_1 \varepsilon_1 f\left(e^{\lambda_1\xi}\right)\right]\\
\geq &\ e^{\varepsilon_1 \xi} S_0 \left[-M_1 (d_1e^{\varepsilon_1} + d_1e^{-\varepsilon_1} - 2d_1 - \mu - c \varepsilon_1) -\beta_1 f'(0) e^{\lambda_1\xi}e^{-\varepsilon_1\xi}\right].
\end{align*}
Select $0<\varepsilon_1<\lambda_1$ small enough such that $-d_1(2 - e^{\varepsilon_1} - e^{-\varepsilon_1}) - \mu - c \varepsilon_1 < 0$ and note that $e^{(\lambda_1 - \varepsilon_1)\xi}\leq 1$ since $\xi<\mathfrak{X}_1<0$. Hence, we need to choose
\[
M_1 \geq \frac{\beta_1 f'(0) }{d_1(2 - e^{\varepsilon_1} - e^{-\varepsilon_1}) + \mu + c \varepsilon_1}
\]
large enough to make sure inequality (\ref{SLowEqu}) holds. This completes the proof.
\end{proof}

\begin{lem}\label{LemLowI}
For each $\varepsilon_2>0$ sufficiently small and $M_2>\frac{\varepsilon_2}{\varepsilon_1}M_1$ large enough, the function $ I^-(\xi)=\max\{e^{\lambda_1\xi}(1-M_2e^{\varepsilon_2 \xi}),0\}$ satisfies
\begin{equation}\label{lowIEqu}
c I'(\xi) \leq d_2J[I](\xi) + \beta S^-(\xi)f(I(\xi)) - \mu_2 I(\xi)
\end{equation}
with $\xi \neq \frac{1}{\varepsilon_2}\ln \frac{1}{M_2}:=\mathfrak{X}_2$.
\end{lem}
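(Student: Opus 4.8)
The plan is to check \eqref{lowIEqu} separately on the two regions separated by the kink $\mathfrak{X}_2$ at which $I^-$ leaves its smooth branch. On $\{\xi<\mathfrak{X}_2\}$ I set $\psi(\xi):=e^{(\lambda_1+\varepsilon_2)\xi}$, so that $I^-(\xi)=I^+(\xi)-M_2\psi(\xi)>0$ with $I^+(\xi)=e^{\lambda_1\xi}$. The region $\xi>\mathfrak{X}_2$ is immediate: there $I^-(\xi)=I^-(\xi+1)=0$ and $I^-(\xi-1)\ge0$, so $J[I^-](\xi)=I^-(\xi-1)\ge0$, while $(I^-)'(\xi)=0$ and $f(I^-(\xi))=f(0)=0$; hence the right-hand side of \eqref{lowIEqu} equals $d_2J[I^-](\xi)\ge0$ and the inequality holds. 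All the work is on $\{\xi<\mathfrak{X}_2\}$.

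On that region I would introduce the linear operator $\mathcal{L}[u]:=d_2J[u]-cu'-\mu_2u$. Since the outer maximum can only increase $I^-(\xi\pm1)$, one has $J[I^-](\xi)\ge J[I^+](\xi)-M_2J[\psi](\xi)$, and as the terms $-c(I^-)'$ and $-\mu_2I^-$ agree with those of $I^+-M_2\psi$ there, it follows that $\mathcal{L}[I^-]\ge\mathcal{L}[I^+]-M_2\mathcal{L}[\psi]$. By Lemma \ref{IUp} (equivalently \eqref{IUpEqu}) we have $\mathcal{L}[I^+]=-\beta S_0f'(0)I^+$, and substituting $e^{(\lambda_1+\varepsilon_2)\xi}$ into \eqref{Delta} gives $\mathcal{L}[\psi]=\psi\,(\Delta(\lambda_1+\varepsilon_2,c)-\beta S_0f'(0))$. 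Adding $\beta S^-f(I^-)$ and collecting the two multiples of $\beta S_0f'(0)$, which recombine into $\beta S_0f'(0)(I^+-M_2\psi)=\beta S_0f'(0)I^-$, the whole inequality \eqref{lowIEqu} collapses to the single defect estimate
\[
\beta\big(S_0f'(0)I^-(\xi)-S^-(\xi)f(I^-(\xi))\big)\le-M_2\,\Delta(\lambda_1+\varepsilon_2,c)\,\psi(\xi).
\]
Taking $0<\varepsilon_2<\lambda_2-\lambda_1$ makes $-\Delta(\lambda_1+\varepsilon_2,c)>0$ by \eqref{WaveSpeedRel}, so the right-hand side is a genuinely positive reservoir against which the left-hand defect must be bounded.

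To estimate the defect I would write
\[
S_0f'(0)I^--S^-f(I^-)=(S_0-S^-)f'(0)I^-+S^-\big(f'(0)I^--f(I^-)\big),
\]
both summands being nonnegative because $S^-\le S_0$ and $f(I)\le f'(0)I$ (the latter from Assumption \ref{Assum}(A2)). For the first summand, the condition $M_2>\frac{\varepsilon_2}{\varepsilon_1}M_1$ with $M_1$ large forces $\mathfrak{X}_2<\mathfrak{X}_1$, so on $\{\xi<\mathfrak{X}_2\}$ one has $S^-=S_0(1-M_1e^{\varepsilon_1\xi})$, hence $S_0-S^-=S_0M_1e^{\varepsilon_1\xi}$; together with $I^-\le I^+=e^{\lambda_1\xi}$ this bounds the first summand by $S_0f'(0)M_1e^{(\lambda_1+\varepsilon_1)\xi}$, which for $\xi<0$ and $\varepsilon_2<\varepsilon_1$ is at most $S_0f'(0)M_1\psi(\xi)$ and is absorbed once $M_2$ is large enough relative to $M_1$ and $-\Delta(\lambda_1+\varepsilon_2,c)$.

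The delicate summand is $S^-\big(f'(0)I^--f(I^-)\big)\le S_0I^-\big(f'(0)-f(I^-)/I^-\big)$. By Assumption \ref{Assum}(A2) the factor $f'(0)-f(I^-)/I^-$ is nonnegative and tends to $0$ as $I^-\to0^+$, i.e.\ as $\xi\to-\infty$, and the main obstacle is to dominate this summand by the reservoir $-M_2\Delta(\lambda_1+\varepsilon_2,c)\psi(\xi)$ \emph{uniformly} for all $\xi<\mathfrak{X}_2$, including the limit $\xi\to-\infty$. Since $I^-\le e^{\lambda_1\xi}$ and $\psi=e^{(\lambda_1+\varepsilon_2)\xi}$, this amounts to showing $f'(0)-f(I^-)/I^-=O(e^{\varepsilon_2\xi})$, which by the monotonicity of $f(I)/I$ reduces to the rate at which $f'(0)-f(I)/I\to0$ as $I\to0^+$; shrinking $\varepsilon_2$ lets the slowly decaying weight $e^{\varepsilon_2\xi}$ outlast this rate, and this is exactly where the power-type behaviour near $0$ of each incidence rate in (i)--(vi) enters. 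Combining the two bounds and choosing $\varepsilon_2$ small and $M_2$ large then yields the defect estimate and completes the case $\xi<\mathfrak{X}_2$, hence the lemma.
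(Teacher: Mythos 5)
Your route is structurally the same as the paper's: the region $\xi>\mathfrak{X}_2$ is dismissed in the same way (your version, with $I^-(\xi)=I^-(\xi+1)=0$ and $I^-(\xi-1)\ge0$, is in fact the correct reading; the paper's roles of $\xi\pm1$ are a typo), and on $\xi<\mathfrak{X}_2$ you compare $I^-$ with $e^{\lambda_1\xi}-M_2e^{(\lambda_1+\varepsilon_2)\xi}$, use $\Delta(\lambda_1,c)=0$ together with $\Delta(\lambda_1+\varepsilon_2,c)<0$ from \eqref{WaveSpeedRel}, and reduce \eqref{lowIEqu} to the defect estimate $\beta\bigl(S_0f'(0)I^-(\xi)-S^-(\xi)f(I^-(\xi))\bigr)\le-M_2\Delta(\lambda_1+\varepsilon_2,c)e^{(\lambda_1+\varepsilon_2)\xi}$, which is precisely the paper's inequality \eqref{I1} and the step after it. Your splitting of the defect into $\beta(S_0-S^-)f'(0)I^-+\beta S^-\bigl(f'(0)I^--f(I^-)\bigr)$ and the absorption of the first summand, bounded by $\beta S_0f'(0)M_1e^{(\lambda_1+\varepsilon_1)\xi}\le\beta S_0f'(0)M_1e^{(\lambda_1+\varepsilon_2)\xi}$, into the reservoir for $M_2$ large is correct, and it is actually the part the paper mishandles: the paper's intermediate claim that the defect is $\le[I^-]^2$ (drawn from \eqref{Equ1} by letting $\epsilon\to0$ and ``$S^-\to S_0$'') is false even for $f(I)=I$, since there the defect equals $\beta S_0M_1f'(0)e^{(\lambda_1+\varepsilon_1)\xi}(1-M_2e^{\varepsilon_2\xi})$, which decays like $e^{(\lambda_1+\varepsilon_1)\xi}$, not like $e^{2\lambda_1\xi}$, as $\xi\to-\infty$ (recall $\varepsilon_1<\lambda_1$ from Lemma \ref{LemLowS}).

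The genuine gap is in your final paragraph, and you flag it yourself: the summand $S^-\bigl(f'(0)I^--f(I^-)\bigr)$ is controlled only by appealing to ``power-type behaviour near $0$'' of $f$, which is not part of Assumption \ref{Assum}. Condition (A2) gives $f(I)/I\to f'(0)$ as $I\to0^+$ but prescribes no rate, whereas your scheme needs $f'(0)-f(I^-(\xi))/I^-(\xi)=O(e^{\varepsilon_2\xi})$, i.e.\ a H\"older-type rate $f'(0)-f(I)/I=O(I^{\sigma})$ for some $\sigma>0$ (then $\varepsilon_2<\sigma\lambda_1$ works). If instead $f(I)/I=f'(0)-1/|\ln I|$ near $0$, which is compatible with (A1)--(A2), then $f'(0)-f(I^-(\xi))/I^-(\xi)\sim 1/(\lambda_1|\xi|)$ decays only algebraically in $|\xi|$, while the reservoir-to-$I^-$ ratio $M_2(-\Delta(\lambda_1+\varepsilon_2,c))e^{\varepsilon_2\xi}$ decays exponentially, so no choice of $\varepsilon_2>0$ and $M_2$ closes the estimate. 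Hence your argument proves the lemma only under an additional (unstated) hypothesis on the vanishing rate of $f'(0)-f(I)/I$, satisfied by all the examples (i)--(vi) but not implied by Assumption \ref{Assum}. In fairness, the paper's own proof breaks down at exactly this point — its passage from \eqref{Equ1} to ``defect $\le[I^-]^2$'' mixes the $\epsilon$, $\delta$ and $M_2$ limits illegitimately — so you have isolated a real defect of the published argument rather than missed an available repair; but as a proof of the lemma as stated, yours, like the paper's, is incomplete.
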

\begin{proof}
If $\xi > \mathfrak{X}_2$, then inequality (\ref{lowIEqu}) holds since $I^-(\xi)=I^-(\xi-1)=0$ and $I^-(\xi+1)\geq 0$. If $\xi < \mathfrak{X}_2$, then
\[
I^-(\xi)=e^{\lambda_1\xi}(1-M_2e^{\varepsilon_2 \xi}),\ \ I^-(\xi-1)=e^{\lambda_1(\xi-1)}(1-M_2e^{\varepsilon_2 (\xi-1)}),\ \ I^-(\xi+1)\geq e^{\lambda_1(\xi+1)}(1-M_2e^{\varepsilon_2 (\xi+1)}),
\]
and inequality (\ref{lowIEqu}) is equivalent to the following inequality:
\begin{align}\label{I1}
& \ \beta S_0 f'(0) I^-(\xi) - \beta S^-(\xi) f (I^-(\xi))\\ \nonumber
\leq & \ d_2J[ I^-](\xi) - \mu_2  I^-(\xi) - c{ I^-}'(\xi) + \beta S_0 f'(0) I^-(\xi).
\end{align}
Note that $\frac{f(I)}{I}$ is non-increasing on $(0,\infty)$ in Assumption \ref{Assum}. For any $\epsilon\in(0,f'(0))$, then there exists a small positive number $\delta>0$ such that
\[
\frac{f( I)}{ I} \geq f'(0) - \epsilon,\ \ 0< I<\delta.
\]
For $0< I<\delta$, we have
\begin{align}\label{Equ1}
\nonumber \beta S_0 f'(0) I^-(\xi) - \beta S^-(\xi) f (I^-(\xi)) = & \left(\beta S_0 - \beta S^-(\xi) \frac{f(I^-(\xi))}{I^-(\xi)}\right) I^-(\xi)\\
\nonumber\leq & \left(\frac{\beta S_0  - \beta S^-(\xi)\frac{f(I^-(\xi))}{I^-(\xi)} + I^-(\xi)}{2}\right)^2\\
\leq & \left(\beta S_0 - \beta S^-(\xi) (f'(0) - \epsilon) +  I^-(\xi)\right)^2.
\end{align}
Recall that $\xi<\mathfrak{X}_2$, we can choose $M_2$ large enough such that
\[
0<I^-(\xi)<\delta\ \ {\rm and}\ \  S^-(\xi)\rightarrow S_0.
\]
Since (\ref{Equ1}) is valid for any $\epsilon$, we have
\[
\beta S_0 f'(0) I^-(\xi) - \beta S^-(\xi) f (I^-(\xi)) \leq [ I^-(\xi)]^2.
\]
Furthermore, the right-hand of (\ref{I1}) is satisfy
\[
d_2J[ I^-](\xi) - \mu_2  I^-(\xi) - c{ I^-}'(\xi) + \beta S_0 f'(0) I^-(\xi) \geq e^{\lambda_1\xi}\Delta(\lambda_1,c) - M_2 e^{\lambda_1+\varepsilon_2}\Delta(\lambda_1+\varepsilon_2,c).
\]
Using the definition of $\Delta(\lambda,c)$ and $[ I^-(\xi)]^2\leq e^{2\lambda_1\xi}$, noticing that $\Delta(\lambda_1+\varepsilon_2,c)<0$ for small $\varepsilon_2>0$ by (\ref{WaveSpeedRel}), then it suffices to show that
\[
e^{(\lambda_1 - \varepsilon_2)\xi} \leq - M_2 \Delta(\lambda_1+\varepsilon_2,c).
\]
The above inequality holds for $M_2$ large enough, since the left-hand side vanishes and the right-hand side tends to infinity as $M_2\rightarrow+\infty$.
This ends the proof.
\end{proof}

Hence, functions \eqref{UpLowSolution} are a pair upper and lower solutions of \eqref{WaveEqu} by the Definition \ref{Def}.

\subsection{Truncated problem}
Let $X>-\mathfrak{X}_2>0$. Define the following set
\begin{equation*}
\Gamma_X := \left\{(\phi, \psi)\in C([-X,X],\mathbb{R}^2)\left|
\begin{array}{l}
\vspace{2mm}
\displaystyle   S^-(\xi)\leq \phi(\xi) \leq S^+(\xi),\  I^-(\xi)\leq \psi(\xi) \leq I^+(\xi)\ \ \forall\xi\in[-X,X],\\
\displaystyle   \phi(-X)=S^-(-X),\ \ \psi(-X)=I^-(-X).
\end{array}\right.\right\}.
\end{equation*}
It is clear that $\Gamma_X$ is a nonempty bounded closed convex set in $C([-X,X],\mathbb{R}^2)$. For any $(\phi,\psi)\in C([-X,X],\mathbb{R}^2)$,
extend it as
\begin{equation*}
\label{hat1}
\hat{\phi}(\xi)=\left\{
\begin{array}{ll}
\displaystyle   \phi(X), &\mbox{for $\xi>X$,}
\\
\displaystyle   \phi(\xi), &\mbox{for $\xi\in[-X, X]$,}
\\
\displaystyle   S^-(\xi), &\mbox{for $\xi< -X$,}\\
\end{array}\right.\ \ \
\hat{\psi}(\xi)=\left\{
\begin{array}{ll}
\displaystyle   \psi(X), &\mbox{for $\xi>X$,}
\\
\displaystyle   \psi(\xi), &\mbox{for $\xi\in[-X, X]$,}
\\
\displaystyle   I^-(\xi), &\mbox{for $\xi< -X$.}\\
\end{array}\right.
\end{equation*}
Consider the following truncated initial problem:
\begin{equation}
\label{TruPro}\left\{
\begin{array}{l}
\vspace{2mm}
\displaystyle   cS'(\xi) + (2d_1+\mu_1+\alpha)S(\xi) = d_1\hat{\phi}(\xi+1) + d_1\hat{\phi}(\xi-1) + \Lambda + \alpha \phi(\xi) - \beta \phi(\xi)f(\psi(\xi)) := H_1(\phi,\psi),\\
\vspace{2mm}
\displaystyle   cI'(\xi) + (2d_2+\mu_2)I(\xi) = d_2\hat{\psi}(\xi+1) + d_2\hat{\psi}(\xi-1) + \beta \phi(\xi) f(\psi(\xi)) := H_2(\phi,\psi),\\
\displaystyle   (S,I)(-X) = (S^-,I^-)(-X),
\end{array}\right.
\end{equation}
where $(\phi,\psi)\in \Gamma_X$ and $\alpha$ is a constant large enough such that $H_1(\phi,\psi)$ is non-decreasing on $\phi(\xi)$. By the ordinary differential equation theory, system (\ref{TruPro}) has a unique solution $(S_X(\xi),I_X(\xi))$
satisfying $(S_X(\xi),I_X(\xi))\in C^1([-X,X],\mathbb{R}^2)$. Then we define an operator
\[
\mathcal{A} = (\mathcal{A}_1,\mathcal{A}_2):\Gamma_X\rightarrow C^1\left([-X,X],\mathbb{R}^2\right)
\]
by
\[
S_X(\xi)=\mathcal{A}_1(\phi,\psi)(\xi)\ \ {\rm and}\ \ I_X(\xi)=\mathcal{A}_2(\phi,\psi)(\xi).
\]

Next we show the operator $\mathcal{A}=(\mathcal{A}_1,\mathcal{A}_2)$ has a fixed point in $\Gamma_X$ by Schauder's fixed point theorem (see \cite[Corollary 2.3.10]{Chang2005}).

\begin{lem}
The operator $\mathcal{A}=(\mathcal{A}_1,\mathcal{A}_2)$ maps $\Gamma_X$ into itself.
\end{lem}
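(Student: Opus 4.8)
The plan is to reduce the claim to four scalar pointwise inequalities and to dispatch each by a one-dimensional comparison principle. Since the unique solution $(S_X,I_X)=(\mathcal{A}_1(\phi,\psi),\mathcal{A}_2(\phi,\psi))$ of \eqref{TruPro} belongs to $C^1([-X,X],\mathbb{R}^2)$, it is automatically continuous, and the prescribed initial data give $S_X(-X)=S^-(-X)$ and $I_X(-X)=I^-(-X)$; thus the only thing left to prove is the ordering $S^-(\xi)\leq S_X(\xi)\leq S^+(\xi)$ and $I^-(\xi)\leq I_X(\xi)\leq I^+(\xi)$ on $[-X,X]$. For a generic comparison, observe that each equation in \eqref{TruPro} has the affine form $cu'(\xi)+k\,u(\xi)=\mathrm{RHS}(\xi)$ with a positive constant $k$; if $W$ satisfies $cW'+kW\geq 0$ on $[-X,X]$ together with $W(-X)\geq 0$, then multiplying by the integrating factor $e^{k\xi/c}$ shows that $e^{k\xi/c}W(\xi)$ is nondecreasing, whence $W\geq 0$. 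I will apply this with $W$ equal to $S^+-S_X$, $S_X-S^-$, $I^+-I_X$, and $I_X-I^-$ in turn, checking in each case that $W(-X)\geq 0$ (which is immediate from $S^-\leq S^+$, $I^-\leq I^+$ and the initial data).

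To produce the sign $cW'+kW\geq 0$ in each case, I would rewrite the defining inequalities of the upper and lower solutions (Definition \ref{Def}, together with Lemmas \ref{IUp}--\ref{LemLowI}) in the same affine form used in \eqref{TruPro}, and then compare their right-hand sides against $H_1(\phi,\psi)$ and $H_2(\phi,\psi)$. Writing $G(s,r):=s\,(\alpha-\beta f(r))$, the difference of right-hand sides splits into two kinds of terms. The diffusion contributions, of the form $d_1\big(S^+(\xi\pm1)-\hat\phi(\xi\pm1)\big)$ and the analogues with $S^-$, $I^+$, $I^-$, $\hat\psi$, are signed correctly because the extensions $\hat\phi,\hat\psi$ preserve the bounds at the shifted arguments; here one uses that $S^-$ is nonincreasing, that $I^+=e^{\lambda_1\xi}$ is increasing, and that $I^-\equiv 0$ for $\xi>\mathfrak{X}_2$, the latter being consistent with $X>-\mathfrak{X}_2>\mathfrak{X}_2$. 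The reaction contributions are differences of the monotone function $G$: for $S_X\leq S^+$ one needs $G(S_0,I^-)\geq G(\phi,\psi)$, which follows from $\phi\leq S_0$ and $\psi\geq I^-$ since $G$ is nondecreasing in $s$ (the choice of $\alpha$ making $\alpha-\beta f\geq 0$) and nonincreasing in $r$ (as $f'>0$ by (A1)), and the bound $S_X\geq S^-$ is symmetric. For the $I$-equation the only extra ingredient is the inequality $f(I^+)\leq f'(0)\,I^+$, which is precisely the concavity coming from $f(I)/I$ being nonincreasing in Assumption \ref{Assum}(A2); it lets me dominate $\phi\,f(\psi)\leq S_0 f'(0) I^+$ and thereby sign the reaction term for $I_X\leq I^+$, while $I_X\geq I^-$ again uses only monotonicity of $f$.

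The step I expect to require the most care is the comparison across the corners of the lower solutions: $S^-$ and $I^-$ are only piecewise $C^1$, with kinks at $\mathfrak{X}_1$ and $\mathfrak{X}_2$, so the differential inequalities of Lemmas \ref{LemLowS} and \ref{LemLowI} hold only for $\xi\neq\mathfrak{X}_1,\mathfrak{X}_2$. To handle this I would note that each relevant difference $W$ is Lipschitz on $[-X,X]$, hence $e^{k\xi/c}W$ is absolutely continuous, and its derivative is nonnegative almost everywhere (everywhere off the single kink point); consequently the function is still nondecreasing and the conclusion $W\geq 0$ survives. On the half-line $\xi>\mathfrak{X}_i$ where the lower solution vanishes, the corresponding inequality degenerates to the trivial statement that a sum of nonnegative terms is nonnegative, so no extra hypotheses are needed there. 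Collecting the four inequalities yields $(S_X,I_X)\in\Gamma_X$, that is, $\mathcal{A}$ maps $\Gamma_X$ into itself.
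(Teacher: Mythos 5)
Your proposal is correct and follows essentially the same route as the paper: verify that the functions from \eqref{UpLowSolution} are upper and lower solutions of the two scalar initial value problems in \eqref{TruPro} (using the choice of $\alpha$, the monotonicity of $f$, the bound $f(I)\leq f'(0)I$, and the ordering of the extensions $\hat\phi,\hat\psi$ at shifted arguments), and then conclude by the scalar ODE comparison principle. The only difference is one of explicitness — you spell out the integrating-factor comparison and the absolute-continuity argument across the kinks at $\mathfrak{X}_1,\mathfrak{X}_2$, which the paper's proof uses implicitly — so this is a refinement of detail, not a different method.
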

\begin{proof}
Firstly, we show that $S^-(\xi)\leq S_X(\xi)$ for any $\xi\in[-X,X].$
If $\xi\in(\mathfrak{X}_1, X),$ it follows that $S^-(\xi)=0$ and is a lower solution of the first equation of (\ref{TruPro}).
If $\xi\in(-X,\mathfrak{X}_1),$ then $S^-(\xi)=S_0(1-M_1 e^{\varepsilon_1 \xi})$, from Lemma \ref{LemLowS}, we have
\begin{align*}
&c{S^-}'(\xi) + (2d_1+\mu_1+\alpha)S^-(\xi) - d_1\hat{\phi}(\xi+1) -d_1 \hat{\phi}(\xi-1) - \Lambda - \alpha\phi(\xi) + \beta \phi(\xi)f(\psi(\xi))\\
\leq & c{S^-}'(\xi) - d_1J[S^-](\xi) - \Lambda + \mu_1 S^-(\xi) + \beta S^-(\xi)f( I^+(\xi))\\
\leq & 0,
\end{align*}
which implies that $S^-(\xi)=S_0(1-M_1 e^{\varepsilon_1 \xi})$ is a lower solution of the first equation of (\ref{TruPro}).
Thus $S^-(\xi)\leq S_X(\xi)$ for any $\xi\in[-X,X].$

Secondly, we show that $S_X(\xi)\leq S^+(\xi) = S_0$ for any $\xi\in[-X,X].$ In fact,
\begin{align*}
&c{S^+}'(\xi) + (2d_1+\mu_1+\alpha)S^+(\xi) - d_1\hat{\phi}(\xi+1) - d_1 \hat{\phi}(\xi-1) - \Lambda -\alpha\phi(\xi)+ \beta \phi(\xi)f(\psi(\xi))\\
\geq & \beta S_0f(I^-(\xi))\\
\geq & 0,
\end{align*}
thus $S^+(\xi)=S_0$ is an upper solution to the first equation of (\ref{TruPro}), which gives us $S_X(\xi)\leq S_0$ for any $\xi\in[-X,X].$

Similarly, we can show that
$I^-(\xi)\leq I_X(\xi)\leq I^+(\xi)$ for any $\xi\in[-X,X].$
This completes the proof.
\end{proof}

\begin{lem}
The operator $\mathcal{A}$ is completely continuous.
\end{lem}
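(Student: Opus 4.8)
The plan is to prove that the operator $\mathcal{A}=(\mathcal{A}_1,\mathcal{A}_2)$ is completely continuous by establishing two things in turn: that $\mathcal{A}$ is continuous on $\Gamma_X$, and that its image is relatively compact in $C([-X,X],\mathbb{R}^2)$. The key structural observation is that the truncated system \eqref{TruPro} is a pair of \emph{decoupled linear first-order ODEs} in the unknowns $S$ and $I$, with the nonlinearities and nonlocal shifts absorbed entirely into the right-hand sides $H_1(\phi,\psi)$ and $H_2(\phi,\psi)$, which depend only on the \emph{input} pair $(\phi,\psi)\in\Gamma_X$ and not on the solution itself. This means I can solve \eqref{TruPro} explicitly by the integrating-factor formula and read off the needed estimates directly from that representation.

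First I would write down the explicit solution. With the initial data prescribed at $\xi=-X$, the first equation integrates to
\begin{equation*}
S_X(\xi) = S^-(-X)\,e^{-\frac{2d_1+\mu_1+\alpha}{c}(\xi+X)} + \frac{1}{c}\int_{-X}^{\xi} e^{-\frac{2d_1+\mu_1+\alpha}{c}(\xi-s)} H_1(\phi,\psi)(s)\,\d s,
\end{equation*}
and analogously for $I_X$ with $2d_2+\mu_2$ in place of $2d_1+\mu_1+\alpha$ and $H_2$ in place of $H_1$. From this formula continuity of $\mathcal{A}$ is immediate: since $f$ is continuous (indeed $C^1$ by \textbf{(A1)}), the maps $(\phi,\psi)\mapsto H_i(\phi,\psi)$ are continuous from $\Gamma_X$ into $C([-X,X],\mathbb{R})$, and the integrating-factor operator is a bounded linear operator on $C([-X,X],\mathbb{R})$; composing these gives continuity of $\mathcal{A}_i$ in the sup norm. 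For relative compactness I would apply the Arzelà–Ascoli theorem. Uniform boundedness of $\{(S_X,I_X)\}$ follows from the previous lemma, which shows the image lies in $\Gamma_X$ and hence is bounded by $S^+$ and $I^+$. For equicontinuity, I would use the differential equations themselves rather than the integral formula: from \eqref{TruPro} one has
\begin{equation*}
c\,S_X'(\xi) = H_1(\phi,\psi)(\xi) - (2d_1+\mu_1+\alpha)S_X(\xi),
\end{equation*}
and since every term on the right is uniformly bounded over $(\phi,\psi)\in\Gamma_X$ and $\xi\in[-X,X]$ (the shifts $\hat\phi(\xi\pm1)$, $\hat\psi(\xi\pm1)$ are bounded because $\Gamma_X$ is, and $f(\psi)$ is bounded since $\psi$ ranges over a bounded set), the derivatives $S_X'$ and $I_X'$ are uniformly bounded by some constant independent of $(\phi,\psi)$. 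A uniform derivative bound gives uniform Lipschitz continuity, hence equicontinuity.

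Combining these two properties, Arzelà–Ascoli shows $\mathcal{A}(\Gamma_X)$ is precompact in $C([-X,X],\mathbb{R}^2)$, and together with continuity this is precisely the statement that $\mathcal{A}$ is completely continuous. I do not anticipate a genuine obstacle here, since the decoupling reduces everything to linear ODE estimates; the only point requiring mild care is verifying that the nonlocal terms $\hat\phi(\xi\pm1)$ stay uniformly bounded near the truncation endpoints, which is handled by the explicit definition of the extensions $\hat\phi,\hat\psi$ and the boundedness of $\Gamma_X$. The main thing to get right is simply to note the uniformity of all bounds in $(\phi,\psi)$, since that uniformity is what upgrades pointwise estimates into the equicontinuity needed for Arzelà–Ascoli.
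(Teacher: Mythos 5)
Your proof is correct and follows essentially the same route as the paper: the explicit integrating-factor representation of the solution of \eqref{TruPro} gives continuity of $\mathcal{A}$, and uniform bounds on $S_X'$, $I_X'$ (coming from the uniform boundedness of $H_1$, $H_2$ over $\Gamma_X$) give equicontinuity, so Arzel\`a--Ascoli yields compactness. The only cosmetic difference is that the paper carries out the continuity step with explicit Lipschitz-type estimates on $\phi f(\psi)$ using $f'(0)$, whereas you argue it qualitatively by composition of continuous maps; both are fine.
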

\begin{proof}
Suppose $(\phi_i(\xi),\psi_i(\xi))\in\Gamma_X,\ i=1,2.$ Denote
\begin{align*}
S_{X,i}(\xi)=\mathcal{A}_1(\phi_i(\xi),\psi_i(\xi))\ \ {\rm and}\ \ I_{X,i}(\xi)=\mathcal{A}_2(\phi_i(\xi),\psi_i(\xi)),
\end{align*}
We show that the operator $\mathcal{A}$ is continuous. By direct calculation, we have
\[
S_X(\xi) = S^-(-X) e ^{-\frac{2d_1+\mu_1+\alpha}{c}(\xi+X)} + \frac{1}{c}\int_{-X}^\xi e ^{\frac{2d_1+\mu_1+\alpha}{c}(\tau+X)}H_1(\phi,\psi)(\tau)\d \tau,
\]
and
\[
I_X(\xi) = I^-(-X) e ^{-\frac{2d_2+\mu_2}{c}(\xi+X)} + \frac{1}{c}\int_{-X}^\xi e ^{\frac{2d_2+\mu_2}{c}(\tau+X)}H_2(\phi,\psi)(\tau)\d \tau,
\]
where $H_i(\phi,\psi)(i=1,2)$ are defined in (\ref{TruPro}).
For any $(\phi_i, \psi_i)\in\Gamma_X$, $i=1,2$, we have
\begin{align*}
&\ |\phi_1(\xi)f(\psi_1(\xi)) - \phi_2(\xi)f(\psi_2(\xi))|\\
\leq & \ |\phi_1(\xi)f(\psi_1(\xi)) - \phi_1(\xi)f(\psi_2(\xi))|+|\phi_1(\xi)f(\psi_2(\xi)) - \phi_2(\xi)f(\psi_2(\xi))|\\
\leq & \ S_0 f'(0) \max_{\xi\in[-X,X]}|\psi_1(\xi)-\psi_2(\xi)| + f'(0)e^{\lambda_1 X}\max_{\xi\in[-X,X]}|\phi_1(\xi)-\phi_2(\xi)|.
\end{align*}
Thus, it is easy to see that the operator $\mathcal{A}$ is continuous. Next, we show $\mathcal{A}$ is compact. Indeed, since $S_X$ and $I_X$ are class of $C^1([-X,X])$, note that
\begin{align*}
&\ \left|c(S_{X,1}'(\xi)-S_{X,2}'(\xi))+(2d_1+\mu_1)(S_{X,1}(\xi)-S_{X,2}(\xi))\right|\\
\leq &\ d_1|(\hat{\phi}_1(\xi+1)-\hat{\phi}_2(\xi+1))| + d_1|(\hat{\phi}_1(\xi-1)-\hat{\phi}_2(\xi-1))| + \beta|\phi_1(\xi)f(\psi_1(\xi)) - \phi_2(\xi)f(\psi_2(\xi))|\\
\leq &\ \beta S_0 f'(0) \max_{\xi\in[-X,X]}|\psi_1(\xi)-\psi_2(\xi)| + \left(2d_1+\beta f'(0)e^{\lambda_1 X}\right)\max_{\xi\in[-X,X]}|\phi_1(\xi)-\phi_2(\xi)|.
\end{align*}
Same arguments with $I_X'$, give us that $S_X'$ and $I_X'$ are bounded. Then the operator $\mathcal{A}$ is compact and is completely continuous. This ends the proof.
\end{proof}

Applying Schauder's fixed point theorem, we have the following lemma.
\begin{lem}
There exists $(S_X,I_X)\in\Gamma_X$ such that
\[
(S_X(\xi),I_X(\xi)) = \mathcal{A}(S_X,I_X)(\xi)
\]
for $\xi\in[-X,X]$.
\end{lem}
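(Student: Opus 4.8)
The plan is to invoke Schauder's fixed point theorem directly, since the two immediately preceding lemmas have already supplied every structural hypothesis. First I would recall that $C([-X,X],\mathbb{R}^2)$, equipped with the supremum norm, is a Banach space, and that $\Gamma_X$ has already been observed to be a nonempty, bounded, closed, convex subset of it. These properties, together with the self-mapping and compactness statements below, are exactly the ingredients demanded by the version of Schauder's theorem cited as \cite[Corollary 2.3.10]{Chang2005}.

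Next I would simply assemble the conclusions of the two preceding lemmas. The first gives $\mathcal{A}(\Gamma_X)\subseteq\Gamma_X$, so that $\mathcal{A}=(\mathcal{A}_1,\mathcal{A}_2)$ is a self-map of $\Gamma_X$; the second gives that $\mathcal{A}$ is completely continuous, i.e. continuous with relatively compact image. With a nonempty bounded closed convex set and a completely continuous self-map in hand, Schauder's theorem yields at least one pair $(S_X,I_X)\in\Gamma_X$ fixed by $\mathcal{A}$, which is precisely the asserted identity $(S_X,I_X)=\mathcal{A}(S_X,I_X)$ on $[-X,X]$.

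There is essentially no analytic obstacle remaining in this step: the difficulty has been front-loaded into verifying the self-map property, where the upper and lower solution inequalities of Lemmas \ref{IUp}--\ref{LemLowI} are invoked, and into the compactness estimate, where the $C^1$ bounds and the compact embedding $C^1([-X,X],\mathbb{R}^2)\hookrightarrow C([-X,X],\mathbb{R}^2)$ are used. The only point I would treat with care is the bookkeeping about function spaces. The operator $\mathcal{A}$ is constructed by solving the linear ODE system (\ref{TruPro}) via its integral representation, so its image lies in $C^1([-X,X],\mathbb{R}^2)$; consequently the fixed point automatically inherits $C^1$ regularity and genuinely solves the truncated problem (\ref{TruPro}), even though the fixed-point argument itself is performed in the coarser $C$ topology. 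I would state this observation explicitly, so as to confirm that the fixed point is a bona fide classical solution of the truncated problem rather than merely a continuous selection, which is what the subsequent passage to the limit $X\to+\infty$ will require.
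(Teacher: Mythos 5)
Your proposal is correct and follows exactly the paper's route: the paper likewise obtains the fixed point by combining the two preceding lemmas (self-mapping of $\Gamma_X$ and complete continuity of $\mathcal{A}$) with the nonempty bounded closed convex structure of $\Gamma_X$ and invoking Schauder's fixed point theorem as in \cite[Corollary 2.3.10]{Chang2005}, offering no further argument. Your additional remark that the fixed point inherits $C^1$ regularity from the integral representation of $\mathcal{A}$, and hence solves \eqref{TruPro} classically, is a sound clarification that the paper leaves implicit.
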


In the following, we show some prior estimates for $(S_X,I_X)$.
Define
\[
C^{1,1}([-X,X])=\{u\in C^1([-X,X])\ |\ u,u' \textrm{are Lipschitz continuous}\}
\]
with the norm
\begin{gather*}
  \|u\|_{C^{1,1}([-X,X])}=\max_{x\in[-X,X]}|u|+\max_{x\in[-X,X]}|u'|+
  \sup_{\begin{subarray}{c}  x,y\in [-X,X]   \\
                             x\neq y
        \end{subarray}}\frac{|u'(x)-u'(y)|}{|x-y|}.
\end{gather*}
\begin{lem}\label{LemC}
There exists a constant $C(Y)>0$ such that
\begin{equation*}
\|S_X\|_{C^{1,1}([-Y,Y])}\leq C(Y)\ \ {\rm and}\ \ \|I_X\|_{C^{1,1}([-Y,Y])}\leq C(Y)
\end{equation*}
for $0<Y<X$ and $X>-\mathfrak{X}_2$.
\end{lem}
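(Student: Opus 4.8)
The plan is to bootstrap regularity one derivative at a time, exploiting that the fixed point $(S_X,I_X)$ solves the \emph{closed} system obtained from \eqref{TruPro} by setting $\phi=S_X$, $\psi=I_X$, and that every structural constant in that system is independent of $X$. First I would record the $L^\infty$ bounds. Since $(S_X,I_X)\in\Gamma_X$, the trapping $S^-\le S_X\le S^+=S_0$ and $I^-\le I_X\le I^+=e^{\lambda_1\xi}$ holds by the definition of $\Gamma_X$, and the same pointwise inequalities are inherited by the extensions $\hat S_X,\hat I_X$. Consequently, on any fixed interval $[-R,R]$ with $R<X$ one has $0\le S_X\le S_0$ and $0\le I_X\le e^{\lambda_1 R}$, with bounds depending only on $R$, not on $X$.

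Next I would solve the closed system for the derivatives. Setting $\phi=S_X,\ \psi=I_X$ in \eqref{TruPro}, the $\alpha S_X$ terms cancel and one obtains
\begin{equation*}
cS_X'=d_1\big(\hat S_X(\xi+1)+\hat S_X(\xi-1)\big)+\Lambda-(2d_1+\mu_1)S_X-\beta S_X f(I_X),
\end{equation*}
together with the analogous identity $cI_X'=d_2\big(\hat I_X(\xi+1)+\hat I_X(\xi-1)\big)+\beta S_X f(I_X)-(2d_2+\mu_2)I_X$. Using the $L^\infty$ bounds on $[-(Y+2),Y+2]$ together with the concavity estimate $f(I)\le f'(0)I$ from Assumption \ref{Assum}, every term on the right is bounded on $[-(Y+1),Y+1]$ by a constant depending only on $Y$. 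This yields a uniform-in-$X$ bound for $\|S_X'\|_{L^\infty([-(Y+1),Y+1])}$ and $\|I_X'\|_{L^\infty([-(Y+1),Y+1])}$, hence a $C^1$ estimate on $[-(Y+1),Y+1]$.

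Finally, for the Lipschitz seminorm of the derivatives on $[-Y,Y]$ I would read off directly from the same formula that $S_X'$ is a sum of terms each Lipschitz on $[-Y,Y]$. The shifts $\hat S_X(\xi\pm1)$ have Lipschitz constant at most $\|S_X'\|_{L^\infty([-(Y+1),Y+1])}$; the linear term $(2d_1+\mu_1)S_X$ is Lipschitz via the $C^1$ bound; and the product $S_X f(I_X)$ is Lipschitz because Assumption \ref{Assum} forces $f'(I)\le f(I)/I\le f'(0)$, so $f$ is globally Lipschitz with constant $f'(0)$ and therefore
\begin{equation*}
|S_X f(I_X)(x)-S_X f(I_X)(y)|\le S_0 f'(0)\|I_X'\|_\infty|x-y|+f'(0)e^{\lambda_1(Y+1)}\|S_X'\|_\infty|x-y|,
\end{equation*}
where the derivative norms are taken over $[-(Y+1),Y+1]$. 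Summing gives a Lipschitz bound for $S_X'$, and the identical argument bounds $\mathrm{Lip}(I_X')$. Collecting the $C^0$, $C^1$, and Lipschitz estimates produces the desired constant $C(Y)$.

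I expect the main (though mild) obstacle to be the nonlocal shift terms $\hat\phi(\xi\pm1)$: because each passage to the next estimate needs control of $S_X,I_X$ on an interval enlarged by one unit, one must run the argument on the nested intervals $[-(Y+2),Y+2]\supset[-(Y+1),Y+1]\supset[-Y,Y]$ and verify that this one-unit loss of domain is harmless once $X$ is large. The crucial qualitative point, which makes all constants uniform in $X$, is that the trapping bounds and the coefficients of \eqref{TruPro} do not depend on $X$.
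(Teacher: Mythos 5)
Your proposal is correct and follows essentially the same route as the paper's proof: read off the closed system satisfied by the fixed point, get $C^{1}$ bounds from the trapping inequalities of $\Gamma_X$ together with $f(I)\le f'(0)I$, and then obtain Lipschitz bounds on the derivatives by estimating the shifted terms $\hat{S}_X(\xi\pm1)$, $\hat{I}_X(\xi\pm1)$ and the product $S_Xf(I_X)$ via the global Lipschitz constant $f'(0)$ of $f$. Your explicit treatment of the one-unit domain enlargement through nested intervals is in fact somewhat more careful than the paper's case-by-case analysis of the extensions, but it is the same argument in substance.
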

\begin{proof}
Recall that $(S_X,I_X)$ is the fixed point of the operator $\mathcal{A},$ then
\begin{equation}
\label{FixEqu}\left\{
\begin{array}{l}
\vspace{2mm}
\displaystyle   cS_X'(\xi) = d_1\hat{S}_X(\xi+1) + d_1\hat{S}_X(\xi-1) - (2d_1+\mu_1)S_X(\xi) + \Lambda - \beta S_X(\xi)f(I_X(\xi)),\\
\displaystyle   cI_X'(\xi) = d_2\hat{I}_X(\xi+1) + d_2\hat{I}_X(\xi-1) - (2d_2+\mu_2)I_X(\xi) + \beta S_X(\xi) f(I_X(\xi)),
\end{array}\right.
\end{equation}
where
\begin{equation*}
\label{hatSV}
\hat{S}_X(\xi)=\left\{
\begin{array}{ll}
\displaystyle   S_X(X), &\mbox{for $\xi>X$,}
\\
\displaystyle   S_X(\xi), &\mbox{for $\xi\in[-X, X]$,}
\\
\displaystyle   S^-(\xi), &\mbox{for $\xi< -X$,}\\
\end{array}\right.\ \ \
\hat{I}_X(\xi)=\left\{
\begin{array}{ll}
\displaystyle   I_X(X), &\mbox{for $\xi>X$,}
\\
\displaystyle   I_X(\xi), &\mbox{for $\xi\in[-X, X]$,}
\\
\displaystyle   I^-(\xi), &\mbox{for $\xi< -X$.}\\
\end{array}\right.
\end{equation*}
Since $0\leq S_X(\xi)\leq S_0$ and $0\leq I_X(\xi)\leq e^{\lambda_1 Y}$ for all $\xi\in[-Y,Y]$,
from (\ref{FixEqu}) we have
\[
|S_X'(\xi)|\leq \frac{4d_1+\mu_1}{c}S_0 + \frac{\Lambda}{c} + \frac{\beta S_0f'(0)}{c}e^{\lambda_1 Y},
\]
and
\[
|I_X'(\xi)|\leq \frac{4d_2+\mu_2 + \beta S_0f'(0)}{c}e^{\lambda_1 Y}.
\]
Thus there exists some constant $C_1(Y) > 0$ such that
\[
\|S_X\|_{C^{1}([-Y,Y])}\leq C_1(Y)\ \ {\rm and}\ \ \|I_X\|_{C^{1}([-Y,Y])}\leq C_1(Y).
\]
For any $\xi,\eta\in[-Y,Y]$, it follows from \cite{ZhangWuIJB2019} that
\begin{align*}
&|\hat{S}_X(\xi+1)-\hat{S}_X(\eta+1)|\\ = &\left\{
\begin{array}{ll}
\displaystyle   |S_X(Y)-S_X(Y)| = 0, & \mbox{for $\xi+1,\eta+1>Y$,}
\\
\displaystyle   |S_X(\xi+1)-S_X(\eta+1)|\leq C_1(Y)|\xi-\eta|,   & \mbox{for $\xi+1,\eta+1<Y$,}
\\
\displaystyle   |S_X(\xi+1)-S_X(Y)|\leq C_1(Y)(Y-\xi-1)\leq C_1(Y)|\xi-\eta|,   & \mbox{for $\xi+1<Y,\eta+1>Y$,}
\\
\displaystyle   |S_X(X)-S_X(\eta+1)|\leq C_1(Y)(Y-\eta-1)\leq C_1(Y)|\xi-\eta|,   & \mbox{for $\xi+1>Y,\eta+1<Y$.}
\end{array}\right.
\end{align*}
Then $|\hat{S}_X(\xi+1)-\hat{S}_X(\eta+1)|\leq C_1(Y)|\xi-\eta|$ for all $\xi,\eta\in[-Y,Y]$. Similarly, we have
\[
|\hat{S}_X(\xi-1)-\hat{S}_X(\eta-1)|\leq C_1(Y)|\xi-\eta|
\]
for all $\xi,\eta\in[-Y,Y]$. Furthermore
\begin{align*}
&\ |\beta S_X(\xi)f(I_X(\xi)) - \beta S_X(\eta)f(I_X(\eta))|\\
\leq &\ |\beta S_X(\xi)f(I_X(\xi)) - \beta S_X(\xi)f(I_X(\eta))|+|\beta S_X(\xi)f(I_X(\eta)) - \beta S_X(\eta)f(I_X(\eta))|\\
\leq &\ \beta f'(0)C_1(Y)\left(|S_X(\xi)-S_X(\eta)| + |I_X(\xi)-I_X(\eta)|\right)
\end{align*}
for all $\xi,\eta\in[-Y,Y]$. Hence, there exist some constant $C(Y) > 0$ such that
\[
\|S_X\|_{C^{1,1}([-Y,Y])}\leq C(Y).
\]
Similarly,
\[
\|I_X\|_{C^{1,1}([-Y,Y])}\leq C(Y)
\]
for any $Y<X$. This completes the proof.
\end{proof}

\section{Existence of traveling wave solutions}
We first state the main results of this section as follows.
\begin{thm}\label{MainTh}
For any wave speed $c>c^*$, system \eqref{Model} admits a nontrivial traveling wave solution $(S(\xi),I(\xi))$ satisfying
\[
S^-\leq S(\xi)\leq S^+\ \ {\rm and}\ \  I^-\leq I(\xi)\leq I^+\ \ {\rm in}\ \ \mathbb{R}.
\]
Furthermore,
\[
\lim_{\xi\rightarrow-\infty}(S(\xi), I(\xi))=(S_0, 0)\ \ {\rm and}\ \ \lim_{\xi\rightarrow+\infty}(S(\xi), I(\xi))=(S^*,I^*).
\]
\end{thm}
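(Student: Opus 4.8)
The plan is to pass to the limit in the truncated problems produced by Schauder's theorem and then to analyse the two tails separately, the delicate one being $+\infty$.

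First I would let $X\to\infty$. Fixing any $Y>0$ and taking $X>\max\{Y,-\mathfrak{X}_2\}$, Lemma \ref{LemC} gives bounds on $\|S_X\|_{C^{1,1}([-Y,Y])}$ and $\|I_X\|_{C^{1,1}([-Y,Y])}$ independent of $X$, so the families $\{S_X\}$, $\{I_X\}$ and their derivatives are equicontinuous on every compact interval. By Arzel\`a--Ascoli together with a diagonal extraction over sequences $X_k\to\infty$, $Y_k\to\infty$, I obtain a subsequence converging in $C^1_{\mathrm{loc}}(\mathbb{R})$ to a pair $(S,I)$. Passing to the limit in \eqref{FixEqu} (the nonlocal terms converge pointwise because $X_k\to\infty$) shows $(S,I)$ solves \eqref{WaveEqu} on $\mathbb{R}$, and the inequalities $S^-\le S\le S^+$, $I^-\le I\le I^+$ are inherited in the limit. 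The behaviour at $-\infty$ is then immediate by squeezing: since $I^+(\xi)=e^{\lambda_1\xi}\to0$ and $I^-(\xi)\ge0$ we get $I(\xi)\to0$, and since $S^+\equiv S_0$ while $S^-(\xi)\to S_0$ we get $S(\xi)\to S_0$, which is \eqref{Bound1}. I would also record strict positivity $I(\xi)>0$ for all $\xi$: it holds on $(-\infty,\mathfrak{X}_2)$ where $I\ge I^->0$, and if $\xi^*=\inf\{\xi:I(\xi)=0\}$ were finite, evaluating the second equation of \eqref{WaveEqu} at $\xi^*$, where $I'(\xi^*)=0$ and $f(0)=0$, would give $0=d_2[I(\xi^*+1)+I(\xi^*-1)]>0$ since $I(\xi^*-1)>0$, a contradiction.

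The substantive part is the limit at $+\infty$. First I would confine the tail to a compact subset of the open positive quadrant. The bound $0\le S\le S_0$ is known; an upper bound for $I$ cannot come from $I^+$ (which blows up at $+\infty$), so I would control $I$ through $S+I$, using $c(S+I)'=d_1 J[S]+d_2 J[I]+\Lambda-\mu_1 S-\mu_2 I$ and a maximum-principle/comparison argument to show $\limsup_{\xi\to+\infty}(S+I)(\xi)<\infty$; combined with uniform persistence $\liminf_{\xi\to+\infty}I(\xi)>0$ (proved by translating the wave along a sequence $\xi_k\to+\infty$ and ruling out convergence to the disease-free state, which is inadmissible when $\Re_0>1$), this keeps $(S,I)$ in a compact set as $\xi\to+\infty$. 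Then I would build a Lyapunov functional. Writing $g(x)=x-1-\ln x\ge0$, the ODE energy associated with \eqref{ODEModel} is
\[
V(\xi)=S^* g\!\left(\frac{S(\xi)}{S^*}\right)+\left(I(\xi)-I^*-\int_{I^*}^{I(\xi)}\frac{f(I^*)}{f(\eta)}\,\mathrm{d}\eta\right),
\]
whose two summands are convex in $S$ and in $I$. Its derivative along the wave produces, besides a reaction part that is $\le0$ exactly because $f(I)/I$ is non-increasing (Assumption \ref{Assum}), the nonlocal contributions $d_1 G'(S)J[S]+d_2 G_I'(I)J[I]$; using the convexity inequality $G'(u(\xi))\,J[u](\xi)\le[G(u(\xi+1))-G(u(\xi))]-[G(u(\xi))-G(u(\xi-1))]$ and absorbing the telescoping right-hand side by correction integrals of the form $\int_{\xi}^{\xi+1}(\cdots)\,\mathrm{d}s$, I would assemble a functional $\mathcal{L}$ with $\mathcal{L}'(\xi)\le0$. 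Being monotone and, by the tail bounds, bounded, $\mathcal{L}$ converges, so $\int^{\infty}(-\mathcal{L}')<\infty$; a LaSalle-type argument, invoking the Lebesgue dominated convergence theorem to pass to the limit in the nonlocal terms along $\xi_k\to+\infty$, forces the reaction defect to vanish and hence $(S,I)\to(S^*,I^*)$, which upgrades to the full limit \eqref{Bound2}.

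I expect the main obstacle to be precisely this $+\infty$ analysis: obtaining the a priori upper bound together with uniform persistence of $I$ (the upper solution being useless there), and then engineering the correction integrals in $\mathcal{L}$ so that the lattice diffusion contributes a non-positive amount while $\mathcal{L}$ stays bounded along the tail. This is exactly the convergence-to-endemic-equilibrium question left open in \cite{ChenGuoHamelNon2017}, and it is where the generality of $f$ in Assumption \ref{Assum} must be used carefully.
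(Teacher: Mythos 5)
Your overall architecture matches the paper's: pass to the limit in the truncated problems via Lemma \ref{LemC} and Arzel\`a--Ascoli, squeeze between the upper/lower solutions to get \eqref{Bound1}, then prove \eqref{Bound2} by a non-increasing Lyapunov functional with telescoping correction integrals, translations $\xi_k\to+\infty$, and Lebesgue dominated convergence (your Korobeinikov-type functional with $\int_{I^*}^{I}f(I^*)/f(\eta)\,\mathrm{d}\eta$ is a harmless variant of the paper's $g$-based functional $L=W_1+d_1S^*W_2+d_2I^*W_3$). The genuine gap is in your a priori bounds at $+\infty$. You propose to bound $I$ by a ``maximum-principle/comparison argument'' applied to $S+I$. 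This fails as stated: when $d_1\neq d_2$ the term $d_1J[S]+d_2J[I]$ is not a discrete Laplacian of $S+I$, and even after using $|J[S]|\le 4S_0$ to reduce to the scalar differential inequality $cW'\le d_2J[W]+C-\mu W$ for $W=S+I$, such an inequality does \emph{not} imply boundedness --- every exponential $e^{\lambda\xi}$ with $\lambda$ large enough that $d_2(e^{\lambda}+e^{-\lambda}-2)-c\lambda-\mu>0$ satisfies it, so no comparison argument can close this step (and, as the paper emphasizes, the system itself has no comparison principle). The paper instead handles boundedness by a case analysis on whether $f$ is bounded (Case 1 is immediate) and, in the unbounded case (e.g.\ bilinear incidence), by Lemmas \ref{lem2}--\ref{lem5} (Appendices A--C): boundedness of the ratios $I(\xi\pm1)/I(\xi)$ and $I'/I$, the implication $I(\xi_k)\to\infty\Rightarrow S(\xi_k)\to0$, and then a characteristic-root argument showing that if $I\to\infty$ its logarithmic growth rate would converge to a root $\omega_0>\lambda_2>\lambda_1$ of $d_2(e^{\omega}+e^{-\omega}-2)-c\omega-\mu_2=0$, contradicting the \emph{global} bound $I\le I^+=e^{\lambda_1\xi}$. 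Note this uses $I^+$ at $+\infty$ in an essential way, precisely where you dismissed it as useless.

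A secondary, smaller gap: your uniform persistence claim ($\liminf_{\xi\to+\infty}I>0$ because convergence to the disease-free state is ``inadmissible when $\Re_0>1$'') is asserted rather than proved. This is exactly the content of the paper's Lemma \ref{lem6} (Appendix D), whose proof is a nontrivial translation-and-limit argument: one normalizes $\Psi_k=I_k/I_k(0)$, passes to a limit equation, shows $\Psi_\infty>0$, studies $\mathcal{Z}=\Psi_\infty'/\Psi_\infty$ via \cite[Lemma 3.4]{ChenGuoMA2003}, and uses the sign of the characteristic roots to force $I'(\xi_k)>0$, a contradiction. Your instinct for the mechanism is right, but as written this step, like the boundedness step, is where the real work of the theorem lies and where your proposal does not yet constitute a proof.
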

The proof of Theorem \ref{MainTh} is divided into the following serval steps.

\textbf{Step 1.} We show that system \eqref{Model} admits a nontrivial traveling wave solution $(S(\xi),I(\xi))$ in $\mathbb{R}$ and satisfying $\lim_{\xi\rightarrow-\infty}(S(\xi), I(\xi))=(S_0, 0)$.

Choose $\{X_n\}_{n=1}^{+\infty}$ be an increasing sequence such that $X_n>-\mathfrak{X}_2$, $X_n>Y$ and $X_n\rightarrow+\infty$ as $n\rightarrow+\infty$ for all $n\in\mathbb{N}$, where $Y$ is from Lemma \ref{LemC}. Denote $(S_n, I_n)\in\Gamma_{X_n}$ be the solution of system (\ref{TruPro}). For any $N\in\mathbb{N}$, since the function $ I^+(\xi)$ is bounded in $[-X_N,X_N]$, then the sequences
\[
\{S_n\}_{n\geq N}\ \ {\rm and}\ \ \{ I_n\}_{n\geq N}
\]
are uniformly bounded in $[-X_N,X_N]$. Then by (\ref{TruPro}), we can obtain that
\[
\{S'_n\}_{n\geq N}\ \ {\rm and}\ \ \{ I'_n\}_{n\geq N}
\]
are also uniformly bounded in $[-X_N,X_N]$. Again with (\ref{TruPro}), we can express $S''_n(\xi)$ and $ I''_n(\xi)$ in terms of $S_n(\xi)$, $ I_n(\xi)$, $S_n(\xi\pm1)$, $ I_n(\xi\pm1)$, $S_n(\xi\pm2)$ and $ I_n(\xi\pm2)$, which give us
\[
\{S''_n\}_{n\geq N}\ \ {\rm and}\ \ \{ I''_n\}_{n\geq N}
\]
are uniformly bounded in $[-X_N+2,X_N-2]$. By the Arzela-Ascoli theorem (see \cite[Theorem A5]{Rudin1991}), we can use a diagonal process to extract a subsequence, denote by $\{S_{n_k}\}_{k\in\mathbb{N}}$ and $\{ I_{n_k}\}_{k\in\mathbb{N}}$ such that
\[
S_{n_k}\rightarrow S,\  I_{n_k}\rightarrow I,\ S'_{n_k}\rightarrow S'\  {\rm and}\  I'_{n_k}\rightarrow I'\ {\rm as}\ k\rightarrow+\infty
\]
uniformly in any compact subinterval of $\mathbb{R}$, for some functions $S$ and $ I$ in $C^1(\mathbb{R})$. Thus $(S(\xi), I(\xi))$ is a solution of system (\ref{WaveEqu}) with
\[
S^-(\xi)\leq S(\xi)\leq S^+(\xi)\ \ {\rm and}\ \  I^-(\xi)\leq I(\xi)\leq I^+(\xi)\ \ {\rm in}\ \ \mathbb{R}.
\]
Furthermore, by the definition of $S^-(\xi)$ and $I^-(\xi)$ from \eqref{UpLowSolution}, it follows that
\[
\lim_{\xi\rightarrow-\infty}(S(\xi), I(\xi))=(S_0, 0).
\]

\textbf{Step 2.}
We claim that the functions $S(\xi)$ and $I(\xi)$ satisfy $0<S(\xi)<S_0\ \ {\rm and}\ \  I(\xi)>0\ \ {\rm in}\ \ \mathbb{R}$.

We first show that $S(\xi)>0$ for all $\xi\in\mathbb{R}$. Assume reversely, that is assume that if there exists some real number $\xi_0$ such that $S(\xi_0) = 0$, then $S'(\xi_0) = 0$ and $J[S](\xi_0)\geq0$. By the first equation of (\ref{WaveEqu}), we have
\[
0 = d_1J[S](\xi_0) + \Lambda > 0,
\]
which is a contradiction. Thus $S(\xi)>0$ for all $\xi\in\mathbb{R}$.

Next, we show that $I(\xi)>0$ for all $\xi\in\mathbb{R}$. By way of contradiction, we assume that if there exists $\xi_1$ such that $I(\xi_1) = 0$ and $I(\xi)>0$ for all $\xi<\xi_1$.
From the second equation of (\ref{WaveEqu}), we have
\[
 I(\xi_1+1) +  I(\xi_1-1) = 0.
\]
Consequently, $ I(\xi_1+1) =  I(\xi_1-1) = 0$ since $I(\xi)\geq0$ in $\mathbb{R}$, which is a contradiction to the definition of $\xi_1$.

To show that $S(\xi)<S_0$ for all $\xi\in\mathbb{R}$, we assume that if there exists $\xi_2$ such that $S(\xi_2) = S_0$, it is easy to obtain
\[
0 = d_1J[S](\xi_2) - \beta S(\xi_2) f(I(\xi_2)) <0.
\]
This contradiction leads to $S(\xi)<S_0$ for all $\xi\in\mathbb{R}$.

\textbf{Step 3.} Boundedness of traveling wave solutions $S(\xi)$ and $I(\xi)$ in $\mathbb{R}$.

We need to consider two cases of the nonlinear incidence function $f(x)$.
In fact, the function $f(x)$ satisfying Assumption \ref{Assum} has two possibilities: (i) $\lim\limits_{x\rightarrow+\infty}f(x)$ exists; (ii)$\lim\limits_{x\rightarrow+\infty}f(x) = +\infty$. For example, the saturated incidence with $f(x)=\frac{bx}{1+cx}$ satisfy (i) since $\lim\limits_{x\rightarrow+\infty}\frac{bx}{1+cx} = \frac{b}{c}$ and the bilinear incidence with $f(x)=bx$ satisfy (ii).

\textbf{Case 1.} $\lim\limits_{x\rightarrow+\infty}f(x)$ exists. Without losing generality, we assume that $\lim\limits_{x\rightarrow+\infty}f(x) = \bar{f} < +\infty$, then it is easy to verify that $\frac{\Lambda}{\mu_1+\beta \bar{f}}$ is a lower solution of $S(\xi)$ and $\frac{\beta S_0\bar{f}}{\mu_2}$ is an upper solution of $I(\xi)$. Then we obtain
\begin{equation}\label{Bound}
\frac{\Lambda}{\mu_1+\beta \bar{f}}\leq S(\xi)<S_0\ \ {\rm and}\ \ 0< I(\xi)\leq\frac{\beta S_0 \bar{f}}{\mu_2}\ \ {\rm for\ \ all}\ \  \xi\in\mathbb{R}.
\end{equation}

\textbf{Case 2.} $\lim\limits_{x\rightarrow+\infty}f(x) = +\infty$. In this case, we have the following lemmas.

\begin{lem}\label{lem2}
The functions $\frac{ I(\xi\pm1)}{ I(\xi)}$ and $\frac{ I'(\xi)}{ I(\xi)}$ are bounded in $\mathbb{R}$.
\end{lem}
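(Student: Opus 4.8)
The plan is to treat the second equation of \eqref{WaveEqu} as a linear nonlocal first‑order equation with a bounded coefficient and to read off the estimates from positivity together with a one‑sided monotonicity. First I would rewrite it as
\[
cI'(\xi) + (2d_2+\mu_2)I(\xi) = d_2 I(\xi+1) + d_2 I(\xi-1) + \beta S(\xi)f(I(\xi)),
\]
whose right‑hand side is nonnegative because $S,I>0$ and $f(I)>0$ in $\mathbb{R}$ (established in Step 2). Hence $cI'(\xi)\ge -(2d_2+\mu_2)I(\xi)$, so $\frac{I'(\xi)}{I(\xi)}\ge -\frac{2d_2+\mu_2}{c}=:-K$ and $e^{K\xi}I(\xi)$ is nondecreasing. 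This immediately gives the two one‑sided bounds $\frac{I(\xi-1)}{I(\xi)}\le e^{K}$ and $\frac{I(\xi+1)}{I(\xi)}\ge e^{-K}$ on $\mathbb{R}$. Since $I^-\le I\le I^+=e^{\lambda_1\xi}$ from Step 1 forces $I(\xi)\sim e^{\lambda_1\xi}$ as $\xi\to-\infty$, the three quotients in the statement are continuous and converge as $\xi\to-\infty$, so they are bounded on every half‑line $(-\infty,Y]$. The whole problem therefore reduces to controlling them as $\xi\to+\infty$.

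For the remaining bounds I would use Assumption \ref{Assum}(A2), which yields $f(I)/I\le f'(0)$; thus the coefficient $a(\xi):=\beta S(\xi)\frac{f(I(\xi))}{I(\xi)}-\mu_2\in[-\mu_2,\ \beta S_0 f'(0)-\mu_2]$ is bounded and $cI'=d_2\big(I(\xi+1)+I(\xi-1)-2I(\xi)\big)+a(\xi)I$. Solving for $d_2 I(\xi+1)$ and discarding the nonnegative terms $d_2 I(\xi-1)$ and $\beta S f(I)$ gives
\[
\frac{I(\xi+1)}{I(\xi)}\le \frac{c}{d_2}\,\frac{I'(\xi)}{I(\xi)}+\frac{2d_2+\mu_2}{d_2},
\]
while the equation itself bounds $I'(\xi)/I(\xi)$ linearly in terms of $I(\xi\pm1)/I(\xi)$. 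Consequently the three still‑missing estimates (the upper bound for $I(\xi+1)/I(\xi)$, the lower bound for $I(\xi-1)/I(\xi)$, and the upper bound for $I'(\xi)/I(\xi)$) are all equivalent, up to fixed constants, to a single assertion: the forward quotient $I(\xi+1)/I(\xi)$ is bounded above.

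To prove that forward bound I would argue by contradiction and translation. If $\sup_\xi I(\xi+1)/I(\xi)=+\infty$, the reduction above forces a sequence $\xi_n\to+\infty$ with $I(\xi_n+1)/I(\xi_n)\to+\infty$; setting $u_n(\xi):=I(\xi_n+\xi)/I(\xi_n+1)$ one has $u_n(1)=1$, $u_n(0)\to0$, and $u_n$ solves the same equation with uniformly bounded coefficient. The uniform inequality $u_n'/u_n\ge -K$ makes $e^{K\xi}u_n$ nondecreasing, which supplies uniform upper bounds for $u_n$ on compact subsets of $(-\infty,1]$ and, through the equation, uniform bounds for $u_n'$ on $(-\infty,0]$; a priori estimates analogous to Lemma \ref{LemC} then let me pass to a locally uniform limit $u\ge0$ with $u(0)=0$, whence $u\equiv0$ on $(-\infty,0]$ by the monotonicity. \textbf{The main obstacle is precisely at this point}: the equation is forward‑looking, coupling $I(\xi)$ to $I(\xi+1)$, so the monotonicity controls $u_n$ only to the left and the limit merely develops a boundary layer at $\xi=0$ (indeed $cu_n'(0)\to d_2 u_n(1)=d_2\neq0$) rather than an outright contradiction. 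To close the argument I expect to invoke the $S$‑equation: a deep dip of $I$ at $\xi_n$ relative to $I(\xi_n+1)$ would require $\beta S f(I)$ to be large nearby, which through $0<S<S_0$ and the depletion of $S$ must suppress the growth coefficient $a(\xi)$ and hence the possible increase of $I$. Making this feedback quantitative—so that the bounds $0<S<S_0$ and the structure of $f$ forbid $I(\xi_n+1)/I(\xi_n)\to\infty$—is the delicate step on which the lemma really rests.
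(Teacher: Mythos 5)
Your opening moves are sound and coincide with the paper's: the positivity of the right-hand side of the $I$-equation makes $U(\xi)=e^{\nu\xi}I(\xi)$, $\nu=(2d_2+\mu_2)/c$, nondecreasing, which gives $I(\xi-1)/I(\xi)<e^{\nu}$ and $I'/I\geq-\nu$; and your reduction of all the remaining bounds to a single upper bound on the forward quotient $I(\xi+1)/I(\xi)$ is correct. But that forward bound \emph{is} the lemma, and your proposal does not prove it, as you yourself concede. The gap is genuine: in your normalization $u_n(\xi)=I(\xi_n+\xi)/I(\xi_n+1)$ the functions $e^{\nu\xi}u_n(\xi)$ are nondecreasing and uniformly bounded on compact subsets of $(-\infty,1]$, so a subsequence converges pointwise to a nondecreasing limit; but limits of monotone functions may jump, and the scenario $u\equiv 0$ on $(-\infty,1)$ with $u(1)=1$ is consistent with every estimate you have --- this is exactly the boundary layer you describe, and no contradiction follows. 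Moreover, the rescue you propose via the $S$-equation is misdirected: the ratio $I(\xi_n+1)/I(\xi_n)$ could a priori blow up while $I$ itself stays small, in which case $S$ remains near $S_0$ and there is no depletion feedback to exploit (that feedback is the right tool for Lemma \ref{lem3}, where $I\to+\infty$ in absolute value, not here).

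The paper closes the gap with a purely local integral estimate using nothing beyond the $I$-equation: instead of discarding the forward term, keep it as a quantitative lower bound for the derivative of the monotone function $U$, namely
\[
\left[e^{\nu\xi}I(\xi)\right]'=\frac{1}{c}e^{\nu\xi}\left[d_2I(\xi+1)+d_2I(\xi-1)+\beta S(\xi)f(I(\xi))\right]>\frac{d_2}{c}\,e^{\nu\xi}I(\xi+1).
\]
Integrating this over a half-unit interval $[\xi-\tfrac12,\xi]$ and using the monotonicity of $e^{\nu s}I(s)$ to bound the integrand from below by its value at the right endpoint yields $e^{\nu\xi}I(\xi)\geq \mathrm{const}\cdot e^{\nu(\xi+\frac12)}I(\xi+\tfrac12)$, i.e.\ an upper bound on the half-step quotient $I(\xi+\tfrac12)/I(\xi)$ by an explicit constant depending only on $c$, $d_2$, $\mu_2$; composing two half-steps gives $I(\xi+1)/I(\xi)<4(c/d_2)^4e^{3\nu}$ for all $\xi$, and the bound on $I'/I$ then follows from the equation exactly as you indicated. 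Note a further advantage you would lose even if your compactness route could be repaired: the paper's constants are uniform in $\xi$ and uniform for wave speeds in compact subsets of $(0,\infty)$, which is precisely what the application in Lemma \ref{lem3} (a sequence of waves with varying speeds) requires, whereas your argument at $-\infty$ and any limit extraction tie the constants to one particular wave.
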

See Appendix A for the details of proof.
\begin{lem}\label{lem3}
Let $\{c_k,S_k, I_k\}$ be a sequence of traveling wave solutions of (\ref{Model}) with speed $\{c_k\}$ in a compact subinterval of $(0,\infty)$. If there is a sequence $\{\xi_k\}$ such that $ I(\xi_k)\rightarrow +\infty$ as $k\rightarrow +\infty$, then $S(\xi_k)\rightarrow 0$ as $k\rightarrow +\infty$.
\end{lem}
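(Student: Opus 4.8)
The plan is to read the first equation of \eqref{WaveEqu} as a scalar linear ODE for $S_k$ whose zeroth-order (absorption) coefficient is forced to be enormous on a whole unit interval ending at $\xi_k$, so that $S_k(\xi_k)$ is squeezed to $0$ by a variation-of-constants estimate. First I would rewrite the $S$-equation as
\[
c_k S_k'(\xi) = g_k(\xi) - p_k(\xi) S_k(\xi),
\]
where $g_k(\xi):= d_1[S_k(\xi+1)+S_k(\xi-1)] + \Lambda$ and $p_k(\xi):= 2d_1 + \mu_1 + \beta f(I_k(\xi))$. By the argument of Step 2 each wave satisfies $0\le S_k\le S_0$, so the forcing is nonnegative and uniformly bounded, $\Lambda \le g_k(\xi)\le 2d_1 S_0 + \Lambda =: G$. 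The crucial observation is that $p_k$ becomes large near $\xi_k$: since the quotients $I_k'/I_k$ are bounded by Lemma \ref{lem2} with a bound $L$ uniform over the compact speed range, we have $|(\ln I_k)'|\le L$ and hence $I_k(\xi)\ge I_k(\xi_k)\,e^{-L}$ for all $\xi\in[\xi_k-1,\xi_k]$. As we are in Case 2, with $f$ increasing and $f(+\infty)=+\infty$, this yields $f(I_k(\xi))\ge A_k := f\bigl(I_k(\xi_k)e^{-L}\bigr)\to+\infty$ uniformly on $[\xi_k-1,\xi_k]$.

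Next I would integrate the linear ODE by variation of constants on $[\xi_k-1,\xi_k]$, obtaining
\[
S_k(\xi_k)= S_k(\xi_k-1)\,e^{-\frac{1}{c_k}\int_{\xi_k-1}^{\xi_k} p_k(u)\,\d u} + \frac{1}{c_k}\int_{\xi_k-1}^{\xi_k} g_k(s)\,e^{-\frac{1}{c_k}\int_{s}^{\xi_k} p_k(u)\,\d u}\,\d s .
\]
Using $p_k\ge \beta A_k$ on $[\xi_k-1,\xi_k]$, together with $S_k(\xi_k-1)\le S_0$ and $g_k\le G$, both terms are controlled: the first is at most $S_0\,e^{-\beta A_k/c_k}$, and, after the substitution $u=\xi_k-s$, the second is at most $\frac{G}{c_k}\int_0^1 e^{-\frac{\beta A_k}{c_k}u}\,\d u \le \frac{G}{\beta A_k}$.

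Finally, because the speeds $c_k$ lie in a compact subinterval of $(0,\infty)$, there is $\bar c$ with $0<c_k\le\bar c$, so $e^{-\beta A_k/c_k}\le e^{-\beta A_k/\bar c}\to 0$, while $G/(\beta A_k)\to 0$ since $A_k\to+\infty$; hence $S_k(\xi_k)\to 0$, as claimed. I expect the main obstacle to be upgrading the blow-up of $I_k$ at the single point $\xi_k$ to a \emph{uniform} lower bound $f(I_k)\to+\infty$ on an interval of fixed length $[\xi_k-1,\xi_k]$: this is precisely where Lemma \ref{lem2} is indispensable, since the absorption estimate only bites if the large coefficient $p_k$ persists over a full unit interval, and the uniformity of the constant $L$ (and hence of $A_k$) in $k$ is exactly what the compactness of the speed range supplies.
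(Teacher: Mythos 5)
Your proof is correct, and it reaches the conclusion by a genuinely different mechanism than the paper's, although both arguments share the same key preliminary step: invoking Lemma \ref{lem2} (whose bounds in Appendix A are explicit in $c$, hence uniform over a compact speed range) to upgrade the pointwise blow-up $I_k(\xi_k)\rightarrow+\infty$ into a uniform blow-up of $f(I_k)$ on a whole interval of fixed length to the left of $\xi_k$. From there the paper argues by contradiction: assuming $S_k(\xi_k)\geq\varepsilon$, it first propagates this lower bound backwards onto an interval $[\xi_k-\delta,\xi_k]$ (using the bound $S_k'\leq\delta_0$, which requires the positive lower bound $\tilde{c}$ on the speeds), so that the absorption term $-\beta S_k f(I_k)$ --- whose magnitude depends on $S_k$ itself --- becomes uniformly large and negative there; then $S_k'\leq -2S_0/\delta$ for large $k$, and integrating over the interval drives $S_k(\xi_k)$ below $-S_0$, contradicting positivity. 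You instead exploit the linear structure of the $S$-equation: by factoring the absorption as the coefficient $p_k(\xi)=2d_1+\mu_1+\beta f(I_k(\xi))$ multiplying $S_k$, the variation-of-constants formula makes no demand on a lower bound for $S_k$, so no contradiction setup is needed, and it yields the explicit estimate $S_k(\xi_k)\leq S_0e^{-\beta A_k/\bar{c}}+G/(\beta A_k)$. What each approach buys: the paper's route is more elementary (only sign and derivative bounds plus integration), while yours is direct, gives a quantitative decay rate for $S_k(\xi_k)$ in terms of $A_k=f\bigl(I_k(\xi_k)e^{-L}\bigr)$, and cleanly separates where the hypothesis on the speeds enters (the upper bound $\bar{c}$ in the exponential term and both endpoints in the uniformity of $L$, versus the paper's use of the lower bound $\tilde{c}$ to control $S_k'$ from above).
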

See Appendix B for the details of proof.
\begin{lem}\label{lem4}
If $\limsup\limits_{\xi\rightarrow+\infty} I(\xi)=+\infty$, then $\lim\limits_{\xi\rightarrow+\infty} I(\xi)=+\infty$.
\end{lem}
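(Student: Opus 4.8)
The plan is to argue by contradiction. Suppose $\limsup_{\xi\to+\infty} I(\xi) = +\infty$ but $\lim_{\xi\to+\infty} I(\xi) \neq +\infty$; since $I>0$, this forces $\liminf_{\xi\to+\infty} I(\xi) = m < +\infty$, so $I$ oscillates and crosses every level $M>m$ infinitely often as $\xi\to+\infty$. The goal is to show that such oscillation actually keeps $I$ bounded above, which contradicts $\limsup_{\xi\to+\infty} I(\xi) = +\infty$.

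First I would upgrade Lemma \ref{lem3} to a uniform statement for our fixed solution: along any sequence with $I(\xi_k)\to+\infty$ one has $S(\xi_k)\to 0$, hence for every $\epsilon>0$ there is $M_\epsilon$ with $S(\xi)\le\epsilon$ whenever $I(\xi)\ge M_\epsilon$. Since $f(I)/I\le f'(0)$ by Assumption \ref{Assum}, choosing $\epsilon$ with $\beta\epsilon f'(0)\le \mu_2/2$ and using the second equation of \eqref{WaveEqu} gives, for every $\xi$ with $I(\xi)\ge M:=M_\epsilon$, the differential inequality $cI'(\xi)\le d_2 J[I](\xi)-\tfrac{\mu_2}{2}I(\xi)$. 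From Lemma \ref{lem2} the quotient $|I'/I|$ is bounded by some constant $C_1$, so $\ln I$ is Lipschitz with constant $C_1$.

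Now fix such an $M>m$ and consider any excursion $(a,b)$, i.e. a connected component of the open set $\{\xi:I(\xi)>M\}$ that contains a point where $I$ is large. Its endpoints are finite (using $I(-\infty)=0$ and $\liminf_{\xi\to+\infty}I=m<M$) and satisfy $I(a)=I(b)=M$ by continuity. Let $\xi^\ast\in(a,b)$ be an interior maximiser and $P=I(\xi^\ast)$, so $I'(\xi^\ast)=0$. The crux — and the main obstacle, since the nonlocal operator $J$ precludes a naive maximum principle — is to locate $\xi^\ast$: if $\xi^\ast$ were at distance at least $1$ from both endpoints, then $\xi^\ast\pm 1\in[a,b]$, whence $I(\xi^\ast\pm1)\le P$ and $J[I](\xi^\ast)\le 0$; but the inequality $cI'\le d_2 J[I]-\tfrac{\mu_2}{2}I$ evaluated at $\xi^\ast$ gives $d_2 J[I](\xi^\ast)\ge \tfrac{\mu_2}{2}P>0$, a contradiction. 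Hence $\xi^\ast$ lies within distance $1$ of $a$ or of $b$, and the Lipschitz bound on $\ln I$ yields $\ln P-\ln M\le C_1$, i.e. $P\le Me^{C_1}$. Thus the maximum of $I$ on every excursion above $M$ is at most $Me^{C_1}$, so $I(\xi)\le Me^{C_1}$ for all large $\xi$, contradicting $\limsup_{\xi\to+\infty}I(\xi)=+\infty$. This contradiction establishes $\lim_{\xi\to+\infty}I(\xi)=+\infty$.
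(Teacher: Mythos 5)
Your proof is correct, and it is essentially the argument this paper relies on: the paper omits its own proof of this lemma, deferring to \cite[Lemma 3.4]{ChenGuoHamelNon2017}, and that proof uses exactly your ingredients --- the bound on $|I'(\xi)/I(\xi)|$ from Lemma \ref{lem2} (so $\ln I$ is Lipschitz), the smallness of $S$ where $I$ is large from Lemma \ref{lem3}, and the maximum-point argument in which the sign of $J[I]$ at an interior maximizer of a component of a superlevel set forces that maximizer to lie within distance $1$ of the component's boundary. Your packaging (fixing one level $M$, bounding every excursion maximum by $Me^{C_1}$, hence bounding $I$ globally) differs only cosmetically from running the contradiction along a sequence of excursions whose maxima tend to infinity, so this is the same approach.
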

The proof of Lemma \ref{lem4} is similar to that of \cite[Lemma 3.4]{ChenGuoHamelNon2017}, so we omit the details.

\begin{lem}\label{lem5}
The function $ I(\xi)$ is bounded in $\mathbb{R}$.
\end{lem}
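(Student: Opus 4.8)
The plan is to argue by contradiction, assuming $I$ is unbounded on $\mathbb{R}$. The construction in Step~1 already gives $I(\xi)\le I^+(\xi)=e^{\lambda_1\xi}$, so $\lim_{\xi\to-\infty}I(\xi)=0$ and any unboundedness must occur at $+\infty$; that is, $\limsup_{\xi\to+\infty}I(\xi)=+\infty$. Lemma~\ref{lem4} then upgrades this to $\lim_{\xi\to+\infty}I(\xi)=+\infty$. Applying Lemma~\ref{lem3} to the single profile $(S,I)$ with the constant speed sequence $c_k\equiv c$ and any $\xi_k\to+\infty$, we obtain $\lim_{\xi\to+\infty}S(\xi)=0$.

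Next I would convert these asymptotics into a linear differential inequality for $I$. By Assumption~\ref{Assum}(A2) the ratio $f(I)/I$ is non-increasing with supremum $f'(0)$, hence $f(I)\le f'(0)I$ for all $I\ge0$. Since $S(\xi)\to0$, one may fix $\Xi$ so large that $\beta S(\xi)f'(0)\le\tfrac12\mu_2$ for $\xi\ge\Xi$; the second equation of \eqref{WaveEqu} then yields
\[
cI'(\xi)\le d_2J[I](\xi)-\tfrac12\mu_2\,I(\xi),\qquad \xi\ge\Xi .
\]
Independently, letting $\xi\to+\infty$ in the first equation of \eqref{WaveEqu} and using $S\to0$ (whence $J[S]\to0$ and $S'\to0$) gives the balance relation $\beta S(\xi)f(I(\xi))\to\Lambda$. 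Both facts encode the same heuristic: once $I$ is enormous the infection term can no longer drain the susceptibles fast enough, and the infected equation behaves like the decaying linear lattice equation $cU'=d_2J[U]-\mu_2U$.

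To close the contradiction I would run a rescaling (blow-up) argument, which is exactly why Lemma~\ref{lem3} is phrased for sequences of profiles. Choosing $\xi_k\to+\infty$ with $I(\xi_k)\to+\infty$, set $\tilde I_k(y):=I(\xi_k+y)/I(\xi_k)$. Lemma~\ref{lem2} bounds $I(\xi\pm1)/I(\xi)$ and $I'(\xi)/I(\xi)$, so $\{\tilde I_k\}$ together with its derivative is uniformly bounded on every compact set, hence precompact in $C^1_{\mathrm{loc}}(\mathbb{R})$; after passing to a subsequence $\tilde I_k\to\tilde I$ with $\tilde I\ge0$ and $\tilde I(0)=1$. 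Dividing the $I$-equation by $I(\xi_k)$ and using $f(I)\le f'(0)I$ with $S(\xi_k+y)\to0$, the source term tends to $0$ locally uniformly, so the limit satisfies the entire linear equation $c\tilde I'=d_2J[\tilde I]-\mu_2\tilde I$, whose nonnegative exponential solutions $e^{\lambda y}$ are governed precisely by $\Delta(\lambda,c)=\beta S_0f'(0)$ from \eqref{Delta}. The contradiction is then drawn from incompatibility with the susceptible equation: the balance $\beta Sf(I)\to\Lambda$ forces $S$ to decay at the very exponential rate at which $I$ grows, whereas the first equation of \eqref{WaveEqu}, a forced linear lattice equation with its own (generically distinct) characteristic exponents, cannot sustain a decaying solution at that rate.

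The main obstacle is precisely this matching-of-exponents step. The infected equation alone does not forbid growth: the inequality $cI'\le d_2J[I]-\tfrac12\mu_2I$ is consistent with $e^{\lambda\xi}$ for $\lambda$ large, and the nonlocal \emph{forward} term $I(\xi+1)$ blocks any naive one-sided comparison or maximum-principle argument on $[\Xi,\infty)$, since $I\to+\infty$ leaves no interior maximum to exploit. Thus the proof must genuinely use the coupling through $\beta Sf(I)\to\Lambda$, and the delicate point is to make the exponent contradiction rigorous — either via the rescaling scheme above paired with the limiting susceptible equation, or through a two-sided Laplace (Ikehara-type) analysis of the joint asymptotics of $(S,I)$ at $+\infty$.
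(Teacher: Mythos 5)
Your first half matches the paper exactly: assume unboundedness, use the bound $I\le I^{+}=e^{\lambda_1\xi}$ to locate the blow-up at $+\infty$, invoke Lemma \ref{lem4} to get $I(\xi)\to+\infty$ and Lemma \ref{lem3} to get $S(\xi)\to0$. But from that point on there is a genuine gap, which you yourself flag: you never close the contradiction, and the route you sketch for closing it does not work. The rescaling $\tilde I_k(y)=I(\xi_k+y)/I(\xi_k)$ normalizes away precisely the information you need. The constraint $I(\xi)\le e^{\lambda_1\xi}$ becomes $\tilde I_k(y)\le e^{\lambda_1(\xi_k+y)}/I(\xi_k)$, which imposes nothing on the limit $\tilde I$ unless you already know how fast $I(\xi_k)$ grows; and the limiting equation $c\tilde I'=d_2J[\tilde I]-\mu_2\tilde I$ does admit positive entire solutions (e.g.\ $e^{\omega_0 y}$ for the positive root $\omega_0$ of $h(\omega,c):=d_2(e^{\omega}+e^{-\omega}-2)-c\omega-\mu_2$), so no contradiction can come from the rescaled limit alone. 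Your fallback via the susceptible equation is also not viable as stated: the ``balance'' $\beta S f(I)\to\Lambda$ requires $S'\to0$, which does not follow pointwise from $S\to0$, and even granting it, a forced linear lattice equation for $S$ has no intrinsic ``characteristic exponents'' that forbid decay at a prescribed rate — the decay rate of $S$ is dictated by the forcing term, so the claimed incompatibility of exponents is not there.

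The paper's proof stays with the unrescaled profile and works with the logarithmic derivative $\theta(\xi)=I'(\xi)/I(\xi)$, whose boundedness is exactly what Lemma \ref{lem2} provides. Writing the $I$-equation as
\begin{equation*}
c\theta(\xi)=d_2 e^{\int_{\xi}^{\xi+1}\theta(s)\,\mathrm{d}s}+d_2 e^{\int_{\xi}^{\xi-1}\theta(s)\,\mathrm{d}s}-(2d_2+\mu_2)+B(\xi),\qquad B(\xi)=\beta S(\xi)\frac{f(I(\xi))}{I(\xi)},
\end{equation*}
one has $B(\xi)\to0$ (since $f(I)/I\le f'(0)$ and $S\to0$), and then the result of Chen--Guo \cite[Lemma 3.4]{ChenGuoMA2003} yields that $\theta$ converges at $+\infty$ to a finite limit $\omega_0$, necessarily the unique positive root of $h(\omega,c)=0$. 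The punchline is a root comparison: $h(\lambda_2,c)=\Delta(\lambda_2,c)-\beta S_0f'(0)=-\beta S_0f'(0)<0$, so by convexity $\lambda_2<\omega_0$, hence for large $\xi$ one gets $I(\xi)\ge C\exp\bigl(\tfrac{\lambda_2+\omega_0}{2}\xi\bigr)$ with $\tfrac{\lambda_2+\omega_0}{2}>\lambda_2>\lambda_1$, contradicting the very bound $I(\xi)\le e^{\lambda_1\xi}$ you only used at the beginning. In short: the a priori upper bound is not just for locating the blow-up — it is the other jaw of the contradiction, and the missing idea is to extract a pointwise exponential lower bound on $I$ itself (via convergence of $I'/I$), rather than passing to a normalized limit where that bound evaporates.
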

See Appendix C for the details of proof.

By the above lemmas, we know that $f(I(\xi))$ is bounded from above since $I(\xi)$ is bounded, then Proposition \eqref{Bound} follows.
Hence, we obtained that $S(\xi)$ and $I(\xi)$ are bounded from above
and $S(\xi)$ has a strictly positive lower bound in $\mathbb{R}$.\
In the following, we will show $I(\xi)$ could not approach zero.
\begin{lem}\label{lem6}
Let $0<c_1\leq c_2$ be given and $(S(\xi),I(\xi))$ be a solution of system \eqref{WaveEqu} with speed $c\in[c_1,c_2]$ satisfying
$0<S(\xi)<S_0\ \ {\rm and}\ \  I(\xi)>0\ \ {\rm in}\ \ \mathbb{R}$.
Then there exists some small enough constant $\varepsilon_0>0$, such that $I'(\xi)>0$ provided that $I(\xi)\leq\varepsilon_0$ for all $\xi\in \mathbb{R}$.
\end{lem}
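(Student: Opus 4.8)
The plan is to argue by contradiction and to rescale near points where $I$ is small, so that in the limit one lands on the linearization \eqref{LinearModel} whose exponential solutions are governed by $\Delta(\lambda,c)$. Suppose the assertion fails for the given interval $[c_1,c_2]$. Then there are solutions $(S_k,I_k)$ of \eqref{WaveEqu} with speeds $c_k\in[c_1,c_2]$ and points $\xi_k$ such that $I_k(\xi_k)\le 1/k$ while $I_k'(\xi_k)\le 0$; in particular $I_k(\xi_k)\to 0$. I would introduce the shifted, normalized functions
\[
v_k(\xi)=\frac{I_k(\xi+\xi_k)}{I_k(\xi_k)},\qquad u_k(\xi)=S_k(\xi+\xi_k),
\]
so that $v_k(0)=1$ and $v_k'(0)=I_k'(\xi_k)/I_k(\xi_k)\le 0$. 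By Lemma \ref{lem2} the ratios $I_k(\xi\pm1)/I_k(\xi)$ and $I_k'(\xi)/I_k(\xi)$ are bounded uniformly in $k$, hence $v_k(\pm1)$ and $v_k'(0)$ stay bounded and, iterating a one-step bound $v_k(\xi\pm1)\le Kv_k(\xi)$, the $v_k$ grow at most like $CK^{|\xi|}$; thus $\{v_k\}$ is bounded on every compact set. Feeding this into the normalized equation bounds $v_k'$ and $v_k''$ locally, while $0<u_k\le S_0$ and \eqref{WaveEqu} bound $u_k',u_k''$ locally. Passing to a subsequence, $c_k\to c_\infty\in[c_1,c_2]$ and $v_k\to v$, $u_k\to u_\infty$ in $C^1_{\mathrm{loc}}(\mathbb{R})$, with $v\ge 0$, $v(0)=1$, $v'(0)\le 0$, and $0<u_\infty\le S_0$.

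The next step is to identify the limiting equations. Since the $v_k$ are locally bounded and $I_k(\xi_k)\to 0$, we have $I_k(\cdot+\xi_k)=I_k(\xi_k)v_k\to 0$ locally uniformly, so by Assumption \ref{Assum} both $f(I_k(\cdot+\xi_k))/I_k(\cdot+\xi_k)\to f'(0)$ and $f(I_k(\cdot+\xi_k))\to 0$ locally uniformly. Letting $k\to\infty$ in the $S_k$–equation gives
\[
c_\infty u_\infty'(\xi)=d_1J[u_\infty](\xi)+\Lambda-\mu_1 u_\infty(\xi).
\]
Writing $w=S_0-u_\infty\ge 0$ (recall $\Lambda=\mu_1 S_0$), the function $w$ is a bounded nonnegative solution of $c_\infty w'=d_1J[w]-\mu_1 w$; translating along a sequence on which $w$ approaches its supremum produces a limit attaining an interior maximum, where, using $J[\,\cdot\,]\le 0$ at a maximum, the equation forces $\mu_1\sup w\le 0$, hence $\sup w=0$ and $u_\infty\equiv S_0$. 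Dividing the $I_k$–equation by $I_k(\xi_k)$ and letting $k\to\infty$ then yields
\[
c_\infty v'(\xi)=d_2J[v](\xi)+\big(\beta S_0 f'(0)-\mu_2\big)v(\xi),
\]
which is exactly the linearization \eqref{LinearModel}, with $\beta S_0f'(0)-\mu_2>0$ because $\Re_0>1$.

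Finally I would extract the contradiction from the structure of \eqref{LinearModel}, whose exponential solutions $e^{\lambda\xi}$ correspond to the roots of $\Delta(\lambda,c_\infty)=0$. Since $\Delta(0,c_\infty)=\beta S_0f'(0)-\mu_2>0$, and $\Delta$ is convex in $\lambda$ with its minimum at a positive $\lambda$, every real root is positive; when $c_\infty<c^*$ there is no real root and only $v\equiv 0$ survives, contradicting $v(0)=1$, while for $c_\infty\ge c^*$ Lemma \ref{WaveSpeed} gives two positive real roots $0<\lambda_1\le\lambda_2$. A nonnegative $v$ of controlled exponential growth cannot carry oscillatory (complex-exponential) modes without changing sign, so $v$ must be a nonnegative combination of the increasing exponentials $e^{\lambda_1\xi},e^{\lambda_2\xi}$; as $v\not\equiv 0$ this gives $v'(\xi)>0$ for all $\xi$, contradicting $v'(0)\le 0$, and the lemma follows. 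I expect the hard part to be precisely this last step: making rigorous that nonnegativity together with the uniform exponential growth bound excludes the oscillatory modes and forces nonnegative coefficients on $e^{\lambda_1\xi},e^{\lambda_2\xi}$. The cleanest route is a two-sided Laplace transform of $v$, analyzing its poles against the roots of $\Delta(\cdot,c_\infty)$ exactly as in the nonexistence analysis of Section 4; the remaining ingredients are the routine local compactness and maximum-principle arguments above.
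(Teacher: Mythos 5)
Your setup---arguing by contradiction, shifting to $\xi_k$, normalizing $v_k=I(\cdot+\xi_k)/I(\xi_k)$, using Lemma \ref{lem2} for locally uniform bounds, and passing to a limit in which $u_\infty\equiv S_0$ and $v$ solves the linearization $c_\infty v'=d_2J[v]+(\beta S_0f'(0)-\mu_2)v$ with $v(0)=1$, $v'(0)\le 0$---is precisely the skeleton of the paper's proof in Appendix D (there $\Psi_k=I_k/I_k(0)$ and $S_\infty=S_0$). The genuine gap is your final step, which you yourself flag as ``the hard part'': the claim that a nonnegative $v$ with growth bound $CK^{|\xi|}$ must be a nonnegative combination of $e^{\lambda_1\xi}$ and $e^{\lambda_2\xi}$. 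The limiting equation is a mixed-type (advance--delay) functional differential equation: its solution space is infinite-dimensional and $\Delta(\cdot,c_\infty)$ has infinitely many \emph{complex} roots, so there is no finite modal decomposition to appeal to, and the assertion that nonnegativity kills all oscillatory contributions is itself a nontrivial representation theorem, not a remark. Your proposed repair---``two-sided Laplace transform exactly as in Section 4''---does not transfer. The Section 4 argument produces a contradiction from the fact that $\Delta(\lambda,c)>0$ for all $\lambda>0$, which is available only when $c<c^*$; for $c_\infty\ge c^*$ the function $\Delta(\cdot,c_\infty)$ has positive real zeros, nontrivial nonnegative solutions (e.g.\ $e^{\lambda_1\xi}$) genuinely exist, and what must be proved is not nonexistence but the sign of $v'(0)$. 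Moreover, with only the bound $v\le CK^{|\xi|}$ and no decay estimate as $\xi\to-\infty$, the transform $\int_{-\infty}^{\infty}e^{-\lambda\xi}v(\xi)\,{\rm d}\xi$ need not converge in any strip, so even the first step of that machinery is unavailable. (Your parallel claim that for $c_\infty<c^*$ ``only $v\equiv0$ survives'' suffers from the same unproved representation.)

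For comparison, the paper closes this step by different means: after showing $\Psi_\infty>0$ (touching-point argument plus monotonicity of $\xi\mapsto\Psi_\infty(\xi)e^{\mu_2\xi/c_\infty}$), it studies the logarithmic derivative $\mathcal{Z}=\Psi_\infty'/\Psi_\infty$, invokes the result of Chen and Guo \cite{ChenGuoMA2003} (their Lemma 3.4) to conclude that $\mathcal{Z}$ has finite limits $\omega_\pm$ at $\pm\infty$ which are necessarily real roots of $\Delta(\cdot,c_\infty)=0$ and hence positive (since $\Delta(\lambda,c_\infty)>0$ for $\lambda\le0$ when $\Re_0>1$), and then runs a minimum-principle argument on the functional equation satisfied by $\mathcal{Z}$ to obtain $\inf_{\mathbb{R}}\mathcal{Z}\ge\min\{\omega_+,\omega_-\}>0$. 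This yields $\Psi_\infty'(0)>0$, contradicting $\Psi_\infty'(0)=\lim_k I'(\xi_k)/I(\xi_k)\le0$. To complete your argument you would need either to import such a result on asymptotics of logarithmic derivatives of positive solutions, or to actually prove a Deny/Choquet-type representation of nonnegative solutions by real exponentials; as written, the decisive conclusion does not follow from what you have established.
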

See Appendix D for the details of proof.

\textbf{Step 4.} Convergence of the traveling wave solutions as $\xi\rightarrow+\infty$. The key point is to construct a suitable Lyapunov functional.

Let $g(x)=x-1-\ln x$, it is easy to check $g(x)\geq0$ since $g(x)$ has the global minimum value $0$ only at $x = 1$.
Define the following Lyapunov functional:
\[
L(S, I)(\xi) = W_1(S, I)(\xi) + d_1S^* W_2(S, I)(\xi) + d_2I^* W_3(S, I)(\xi),
\]
where
\[
W_1(S, I)(\xi) = c S^* g\left(\frac{S(\xi)}{S^*}\right) + c I^* g\left(\frac{I(\xi)}{I^*}\right),
\]
\[
W_2(S, I)(\xi) = \int_0^1 g\left(\frac{S(\xi-\theta)}{S^*}\right)\d \theta - \int_{-1}^0 g\left(\frac{S(\xi-\theta)}{S^*}\right)\d \theta
\]
and
\[
W_3(S, I)(\xi) = \int_0^1 g\left(\frac{ I(\xi-\theta)}{I^*}\right)\d \theta - \int_{-1}^0 g\left(\frac{ I(\xi-\theta)}{I^*}\right)\d \theta.
\]
Thanks to the boundedness of $S(\xi)$ and $I(\xi)$ (see Step 3), we have $W_1(S,I)(\xi)$ and $W_2(S,I)(\xi)$ are well defined and bounded from below.
Since $\lim_{\xi\rightarrow-\infty}I(\xi)=0$, we need to consider the process of $\xi$ approaching negative infinity for $W_3(S,I)(\xi)$. For the $\varepsilon_0$ in Lemma \ref{lem6}, define $\xi^* = \min\{\xi\in\mathbb{R}|I(\xi) = \varepsilon_0\}$, then $I(\xi)$ is increasing in $(-\infty, \xi^*]$. By the properties of function $g$, we have $W_3(S,I)(\xi)\geq0$ for $\xi\in(-\infty, \xi^*]$. Thus the Lyapunov function $L(S,I)(\xi)$ is well defined and bounded from below.

Next we show that the map $\xi\mapsto L(S, I)(\xi)$ is non-increasing. The derivative of $W_1(S, I)(\xi)$ along the solution of \eqref{WaveEqu} is calculated as follows
\begin{align*}
\frac{\d W_1(S, I)(\xi)}{\d \xi}\bigg|_{\eqref{WaveEqu}} = &\left(1-\frac{S^*}{S(\xi)}\right) c\frac{\d S(\xi)}{\d \xi} + \left(1-\frac{I^*}{ I(\xi)}\right) c\frac{\d  I(\xi)}{\d \xi}\\
= & \left(1-\frac{S^*}{S(\xi)}\right) \left(d_1J[S](\xi) + \Lambda - \mu_1 S(\xi) - \beta S(\xi)f( I(\xi))\right)\\
& + \left(1-\frac{I^*}{ I(\xi)}\right) \left(d_2J[ I](\xi) + \beta S(\xi)f(I(\xi)) - \mu_2  I(\xi)\right)\\
= & \left(1-\frac{S^*}{S(\xi)}\right) d_1J[S](\xi) + \left(1-\frac{I^*}{ I(\xi)}\right) d_2J[ I](\xi) + \Theta(\xi),
\end{align*}
where
\[
\Theta(\xi) = \left(1-\frac{S^*}{S(\xi)}\right) \left(\Lambda - \mu_1 S(\xi) - \beta S(\xi)f(I(\xi))\right) + \left(1-\frac{I^*}{ I(\xi)}\right) \left(\beta S(\xi)f(I(\xi)) - \mu_2  I(\xi)\right).
\]
Note that the endemic equilibrium $(S^*,I^*)$ of system (\ref{Model}) satisfying (\ref{EStar}) and $\mu_1 = \mu + \alpha$.
By some calculation, we obtain
\begin{align*}
\Theta(\xi)= &\  \mu_1 S^* \left(2-\frac{S^*}{S(\xi)}-\frac{S(\xi)}{S^*}\right) - \beta S^*f(I^*)\left[g\left(\frac{S^*}{S(\xi)}\right)+g\left(\frac{I^*S(\xi)f(I(\xi))}{I(\xi)S^*f(I^*)}\right)\right]\\
&-\beta S^* f(I^*)\left[g\left(\frac{I(\xi)}{I^*}\right) - g\left(\frac{f( I(\xi))}{f(I^*)}\right)\right].
\end{align*}
For $W_2(S, I)(\xi)$, one has that
\begin{align*}
\frac{\d W_2(S, I)(\xi)}{\d \xi}\bigg|_{\eqref{WaveEqu}} = &\ \frac{\d}{\d \xi} \left[\int_0^1 g\left(\frac{S(\xi-\theta)}{S^*}\right)\d \theta - \int_{-1}^0 g\left(\frac{S(\xi-\theta)}{S^*}\right)\d \theta\right]\\
= &\  \int_0^1 \frac{\d}{\d \xi}g\left(\frac{S(\xi-\theta)}{S^*}\right)\d \theta - \int_{-1}^0 \frac{\d}{\d \xi}g\left(\frac{S(\xi-\theta)}{S^*}\right)\d \theta\\
= &\ - \int_0^1 \frac{\d}{\d \theta}g\left(\frac{S(\xi-\theta)}{S^*}\right)\d \theta + \int_{-1}^0 \frac{\d}{\d \theta}g\left(\frac{S(\xi-\theta)}{S^*}\right)\d \theta\\
= &\ 2 g\left(\frac{S(\xi)}{S^*}\right) - g\left(\frac{S(\xi-1)}{S^*}\right) - g\left(\frac{S(\xi+1)}{S^*}\right).
\end{align*}
Similarly,
\[
\frac{\d W_3(S, I)(\xi)}{\d \xi}\bigg|_{\eqref{WaveEqu}} = 2 g\left(\frac{ I(\xi)}{I^*}\right) - g\left(\frac{ I(\xi-1)}{I^*}\right) - g\left(\frac{ I(\xi+1)}{I^*}\right).
\]
It can be shown that
\[
\left(1-\frac{S^*}{S(\xi)}\right) d_1J[S](\xi) + S^*\frac{\d W_2(S, I)(\xi)}{\d \xi}\bigg|_{\eqref{WaveEqu}} = - d_1S^* \left[g\left(\frac{S(\xi-1)}{S(\xi)}\right) + g\left(\frac{S(\xi+1)}{S(\xi)}\right)\right],
\]
and
\[
\left(1-\frac{I^*}{ I(\xi)}\right) d_2J[ I](\xi) + I^*\frac{\d W_3(S, I)(\xi)}{\d \xi}\bigg|_{\eqref{WaveEqu}} = - d_2I^* \left[g\left(\frac{ I(\xi-1)}{ I(\xi)}\right) + g\left(\frac{ I(\xi+1)}{ I(\xi)}\right)\right].
\]
Thus
\begin{align*}
\frac{\d L(S, I)(\xi)}{\d \xi}\bigg|_{\eqref{WaveEqu}}
= &\ - d_1S^* \left[g\left(\frac{S(\xi-1)}{S(\xi)}\right) + g\left(\frac{S(\xi+1)}{S(\xi)}\right)\right] - d_2I^* \left[g\left(\frac{ I(\xi-1)}{ I(\xi)}\right) + g\left(\frac{ I(\xi+1)}{ I(\xi)}\right)\right]\\
&\ -\beta_1 S^* f(I^*)\left[g\left(\frac{S^*}{S(\xi)}\right) + g\left(\frac{S(\xi)f(I(\xi))I^*}{S^*f(I^*) I(\xi)}\right) + g\left(\frac{ I(\xi)}{I^*}\right) - g\left(\frac{f( I(\xi))}{f(I^*)}\right)\right]\\
&\ +\mu S^* \left(2-\frac{S^*}{S(\xi)}-\frac{S(\xi)}{S^*}\right).
\end{align*}
Since the arithmetic mean of non-negative real numbers is greater than or equal to the geometric mean of the same list, then we have
\[
2 - \frac{S^*}{S(\xi)} - \frac{S(\xi)}{S^*}\leq 0.
\]
From Assumption \ref{Assum}, we can conclude that
\[
\left(1-\frac{f(I^*)}{f(I)}\right)\left(\frac{f(I)}{f(I^*)} - \frac{I}{I^*}\right)\leq 0.
\]
Then, we have
\begin{align*}
g\left(\frac{f( I(\xi))}{f(I^*)}\right) - g\left(\frac{ I(\xi)}{I^*}\right) =\ &\frac{f(I)}{f(I^*)} - \frac{I}{I^*} + \ln\left(\frac{I f(I^*)}{I^*f(I)}\right)\\
\leq\ &\frac{f(I)}{f(I^*)} - \frac{I}{I^*} + \frac{I f(I^*)}{I^*f(I)} - 1\\
=\ &\left(1-\frac{f(I^*)}{f(I)}\right)\left(\frac{f(I)}{f(I^*)} - \frac{I}{I^*}\right)\\
\leq\ &0.
\end{align*}
Here we use $\frac{I f(I^*)}{I^*f(I)} - 1 - \ln\left(\frac{I f(I^*)}{I^*f(I)}\right) \geq 0$. Hence, the map $\xi\mapsto L(S, I)(\xi)$ is non-increasing. Consider an increasing sequence $\{\xi_k\}_{k\geq 0}$ with $\xi_k>0$ such that $\xi_k\rightarrow+\infty$ when $k\rightarrow+\infty$ and denote
$$\{S_k(\xi)=S(\xi+\xi_k)\}_{k\geq 0}\ \ \textrm{and}\ \ \{ I_k(\xi)= I(\xi+\xi_k)\}_{k\geq 0}.$$
Since the functions $S$ and $I$ are bounded, the system (\ref{WaveEqu}) give us that the functions $S$ and $I$ have bounded derivatives.
Then by Arzela-Ascoli theorem, the functions $\{S_k(\xi)\}$ and $\{I_k(\xi)\}$ converge in $C_{loc}^{\infty}(\mathbb{R})$ as $k\rightarrow+\infty$, up to extraction of a subsequence,
one may assume that the sequences of $\{S_k(\xi)\}$ and $\{I_k(\xi)\}$ convergence towards some nonnegative $C^\infty$ functions $S_\infty$ and $ I_\infty.$ Furthermore, since $L(S, I)(\xi)$ is non-increasing on $\xi$ and bounded from below,
then there exists a constant $C_0$ and large $k$ such that
\[
C_0\leq L(S_k, I_k)(\xi)=L(S, I)(\xi+\xi_k)\leq L(S, I)(\xi).
\]
Therefore there exists some $\delta\in \mathbb{R}$ such that $\lim\limits_{k\rightarrow\infty} L(S_k, I_k)(\xi)=\delta$ for any $\xi\in \mathbb{R}$. By Lebesgue's dominated convergence theorem (see \cite[Theorem 11.32]{Rudin1976}), we have
\[
\lim_{k\rightarrow+\infty}L(S_k, I_k)(\xi)=L(S_\infty, I_\infty)(\xi),\ \xi\in \mathbb{R}.
\]
Thus
\[
L(S_\infty, I_\infty)(\xi)=\delta.
\]
Note that $\frac{\d L}{\d \xi}=0$ if and only if $S(\xi)\equiv S^*$ and $ I(\xi)\equiv I^*$, it follows that
\[
(S_\infty, I_\infty)\equiv (S^*,I^*).
\]

Hence, we complete the proof of Theorem \ref{MainTh}.

\begin{thm}\label{Thcstar}
For the wave speed $c=c^*$, system \eqref{Model} admits a nontrivial traveling wave solution $(S(\xi),I(\xi))$ satisfying
\[
S^-\leq S(\xi)\leq S^+\ \ {\rm and}\ \  I^-\leq I(\xi)\leq I^+\ \ {\rm in}\ \ \mathbb{R}.
\]
Furthermore,
\[
\lim_{\xi\rightarrow-\infty}(S(\xi), I(\xi))=(S_0, 0)\ \ {\rm and}\ \ \lim_{\xi\rightarrow+\infty}(S(\xi), I(\xi))=(S^*,I^*).
\]
\end{thm}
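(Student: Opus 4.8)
The plan is to obtain the critical wave by a limiting argument from the supercritical waves already produced in Theorem \ref{MainTh}, rather than by constructing upper and lower solutions directly at $c=c^*$ (where the two roots $\lambda_1,\lambda_2$ of $\Delta(\cdot,c)$ coalesce at $\lambda^*$ and the lower solution $I^-$ from Lemma \ref{LemLowI} degenerates, since $\Delta(\lambda^*+\varepsilon_2,c^*)$ is no longer negative). First I would fix some $c_1>c^*$ and pick a decreasing sequence $c_k\downarrow c^*$ with $c_k\in(c^*,c_1]$. For each $k$, Theorem \ref{MainTh} furnishes a traveling wave $(S_k,I_k)$ with $0<S_k<S_0$, $I_k>0$, $\lim_{\xi\to-\infty}(S_k,I_k)=(S_0,0)$ and $\lim_{\xi\to+\infty}(S_k,I_k)=(S^*,I^*)$. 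Because traveling waves are invariant under translation, the crucial step is to \emph{normalize} each profile so that its transition region stays at a fixed location; otherwise the local limit could collapse onto one of the constant equilibria. Using Lemma \ref{lem6}, which guarantees $I_k'>0$ whenever $I_k\le\varepsilon_0$, and choosing $\varepsilon_0<I^*$, there is a unique point $\xi_k^*$ with $I_k(\xi_k^*)=\varepsilon_0$ and $I_k<\varepsilon_0$ on $(-\infty,\xi_k^*)$; I would translate so that $\xi_k^*=0$, replacing $(S_k,I_k)$ by $(S_k(\cdot+\xi_k^*),I_k(\cdot+\xi_k^*))$, so that $I_k(0)=\varepsilon_0$ and $I_k$ is increasing on $(-\infty,0]$.

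Next I would pass to the limit. The boundedness established in Step 3 of Theorem \ref{MainTh} together with Lemmas \ref{lem2}--\ref{lem5}, all stated uniformly for wave speeds ranging in a compact subinterval of $(0,\infty)$, yields bounds on $S_k,I_k$ that are uniform in $k$; the equation \eqref{WaveEqu} then bounds $S_k',I_k'$ and, differentiating once more, $S_k'',I_k''$ uniformly on compact sets, exactly as in Lemma \ref{LemC}. By the Arzela--Ascoli theorem and a diagonal extraction I obtain a subsequence with $(S_k,I_k)\to(S,I)$ in $C^1_{loc}(\mathbb{R})$, where $(S,I)\in C^1(\mathbb{R})^2$ solves \eqref{WaveEqu} with $c=c^*$. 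The normalization $I_k(0)=\varepsilon_0$ passes to the limit, so $I(0)=\varepsilon_0>0$ and the wave is nontrivial; repeating the sign arguments of Step 2 of Theorem \ref{MainTh} gives $0<S<S_0$ and $I>0$ on $\mathbb{R}$, while the two-sided bounds $S^-\le S\le S^+$ and $I^-\le I\le I^+$ in the statement are recovered in the limit.

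For the boundary behavior at $-\infty$ I would use that $I(\xi)\le\varepsilon_0$ and $I$ is non-decreasing on $(-\infty,0]$ (inherited from the monotonicity of the $I_k$), so that $\lim_{\xi\to-\infty}I(\xi)=:I_-$ exists with $0\le I_-\le\varepsilon_0<I^*$. Shifting the limit profile by $s\to-\infty$ and extracting once more, the shifted limit is a solution of \eqref{WaveEqu} that is constant, hence an equilibrium; the only equilibria are $(S_0,0)$ and $(S^*,I^*)$, and since $I_-<I^*$ this forces $\lim_{\xi\to-\infty}(S,I)=(S_0,0)$. For the behavior at $+\infty$ I would simply reuse the Lyapunov functional $L(S,I)$ of Step 4 of Theorem \ref{MainTh}: its dissipation identity was derived under $c>0$ only, so it remains valid at $c=c^*$; $L$ is non-increasing and bounded below, and the same shift-and-pass-to-the-limit argument, together with the fact that $\frac{\d L}{\d\xi}=0$ forces $(S,I)\equiv(S^*,I^*)$, yields $\lim_{\xi\to+\infty}(S,I)=(S^*,I^*)$.

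The main obstacle is the nontriviality of the limit: everything hinges on the normalization surviving the passage to the limit, which in turn relies on Lemma \ref{lem6} to make the level set $\{I_k=\varepsilon_0\}$ a single well-defined crossing point and on the a priori bounds remaining uniform in $k$ all the way down to $c=c^*$, so that the compactness argument does not deteriorate as $c_k\to c^*$. Once nontriviality is secured, the left-hand limit is a routine monotonicity-plus-equilibrium argument and the right-hand limit is a verbatim repetition of the Lyapunov computation.
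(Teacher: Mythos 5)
Your proposal follows essentially the same route as the paper: the paper obtains the critical wave precisely by the approximation technique of Chen--Guo--Hamel (taking supercritical waves with speeds $c_k\downarrow c^*$, normalizing translates, and passing to the limit via uniform a priori bounds), and then invokes the fact that the Lyapunov functional is independent of $c$ to get the asymptotic boundary conditions, exactly as you do. The paper omits all details, so your write-up --- normalization via Lemma \ref{lem6}, compactness and diagonal extraction, and the equilibrium/Lyapunov arguments at $\mp\infty$ --- is a faithful expansion of the argument the paper points to.
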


For the case $c=c^*$, we can obtain the existence of traveling wave solutions $(S(n+c^*t),I(n+c^*t))$ by an approximation technique used in \cite[Section 4]{ChenGuoHamelNon2017}. Since the Lyapunov functional is independent on $c$, we can also have that $(S(n+c^*t),I(n+c^*t))$ satisfies the asymptotic boundary conditions \eqref{Bound1} and \eqref{Bound2}.
So we omit the details.

\section{Nonexistence of traveling wave solutions}

In this section, we study the nonexistence of traveling wave solutions. Firstly, we show that $c>0$ if there exists a nontrivial positive solution $(S(\xi), I(\xi))$ of system (\ref{WaveEqu}) satisfying the asymptotic boundary conditions (\ref{Bound1}) and (\ref{Bound2}).

\begin{lem}\label{cpositive}
Assume that $\Re_0>1$ and there exists a nontrivial solution $(S(\xi), I(\xi))$ of system (\ref{Model}) satisfying the asymptotic boundary conditions (\ref{Bound1}) and (\ref{Bound2}). Then $c>0$, where $c$ is defined in (\ref{c}).
\end{lem}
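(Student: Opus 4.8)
The plan is to argue by contradiction. Arguing as in Step~2 of the proof of Theorem~\ref{MainTh}, we may assume $0<S(\xi)<S_0$ and $I(\xi)>0$ on $\mathbb{R}$. Suppose $c\le 0$. The decisive elementary observation is that then the characteristic function has no positive root: since $\Re_0>1$ gives $\Delta(0,c)=\beta S_0 f'(0)-\mu_2=\mu_2(\Re_0-1)>0$, while $\partial_\lambda\Delta(\lambda,c)=d_2(e^\lambda-e^{-\lambda})-c>0$ for every $\lambda>0$ when $c\le 0$, the function $\Delta(\cdot,c)$ is strictly increasing on $(0,\infty)$ and hence $\Delta(\lambda,c)>0$ for all $\lambda>0$. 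This positivity is what I will eventually contradict through a two-sided Laplace transform of $I$, the tool announced for this section.

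To prepare the transform I rewrite the $I$-equation as $cI'(\xi)=d_2 J[I](\xi)+rI(\xi)+Q(\xi)$, where $r:=\beta S_0 f'(0)-\mu_2>0$ and $Q(\xi):=\beta S(\xi)f(I(\xi))-\beta S_0 f'(0) I(\xi)$. Since $S<S_0$ and, by Assumption~\ref{Assum}, $f(I)/I\le f'(0)$, one has $Q(\xi)\le 0$ on $\mathbb{R}$; and because $S(\xi)\to S_0$ and $f(I(\xi))/I(\xi)\to f'(0)$ as $\xi\to-\infty$, also $Q(\xi)=o(I(\xi))$ there. Moreover $\beta Sf(I)-\mu_2 I=I\,[\beta S f(I)/I-\mu_2]\ge\delta I$ for all $\xi\le\xi_0$ and some $\delta>0$. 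The next step is an a~priori decay estimate: I want $I$ to decay \emph{exactly} exponentially at $-\infty$, so that its two-sided Laplace transform $L(\lambda)=\int_{\mathbb{R}}e^{-\lambda\xi}I(\xi)\,d\xi$ is defined and analytic on a maximal interval $(0,\alpha_0)$ with $0<\alpha_0<\infty$. A lower exponential bound $I(\xi)\ge C e^{k\xi}$ (hence $\alpha_0<\infty$) follows by inserting $\beta Sf(I)-\mu_2 I\ge\delta I$ into the equation and discarding the nonnegative shift terms, which yields $cI'\ge(\delta-2d_2)I$ near $-\infty$; conversely, sub-exponential decay is incompatible with the equation, since it would make $cI'$ and $d_2 J[I]$ negligible against the reaction term $\delta I$, forcing $\delta=0$. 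For the special value $c=0$ this step is in fact elementary: the inequality $d_2 J[I]\le-\delta I$ makes the increment $\xi\mapsto I(\xi)-I(\xi-1)$ strictly decreasing, and since it tends to $0$ at $-\infty$ it must stay $\le 0$, so $I$ is nondecreasing as $\xi\downarrow-\infty$, contradicting $I(-\infty)=0$ outright.

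With the transform defined on $(0,\alpha_0)$, I apply it to the reformulated equation; the shifts produce the factors $e^{\pm\lambda}$ and, for $\lambda\in(0,\alpha_0)$, all boundary terms in the integration by parts vanish. This gives the identity $\Delta(\lambda,c)L(\lambda)=-\widetilde{Q}(\lambda)$, where $\widetilde{Q}(\lambda)=\int_{\mathbb{R}}e^{-\lambda\xi}Q(\xi)\,d\xi$. Here $L(\lambda)>0$ and, since $Q\le 0$, $-\widetilde{Q}(\lambda)\ge 0$. Because $Q=o(I)$ at $-\infty$, splitting the integral at a far-left point $\xi_1$ gives a bound $0\le-\widetilde{Q}(\lambda)\le\epsilon L(\lambda)+C_{\xi_1}$ with $\epsilon\to 0$ as $\xi_1\to-\infty$; as $\lambda\uparrow\alpha_0$ one has $L(\lambda)\to+\infty$, so $-\widetilde{Q}(\lambda)/L(\lambda)\to 0$. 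Passing to the limit in $\Delta(\lambda,c)=-\widetilde{Q}(\lambda)/L(\lambda)$ then forces $\Delta(\alpha_0,c)=0$ with $\alpha_0>0$, contradicting the positivity of $\Delta(\cdot,c)$ established above. Hence $c>0$.

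I expect the main obstacle to be the a~priori decay estimate of the second paragraph: pinning the abscissa $\alpha_0$ to a finite positive number (equivalently, showing $I$ decays neither faster nor slower than a genuine exponential at $-\infty$) is what makes the transform and its singularity meaningful, and it is the only place where the precise structure of the equation, rather than soft sign information, is used. A secondary technical point is justifying that $L(\lambda)\to+\infty$ at the abscissa and that $\widetilde{Q}$ does not blow up faster than $L$; this is where $Q=o(I)$ does the real work, and it can be phrased, if preferred, through Widder's theorem that a finite abscissa of the Laplace transform of a nonnegative function must be a singularity, which is impossible since $-\widetilde{Q}/\Delta$ continues analytically across any point where $\Delta\ne 0$.
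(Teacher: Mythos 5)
Your opening observation is correct and nicely isolates the mechanism: for $c\le 0$ one has $\Delta(0,c)=\mu_2(\Re_0-1)>0$ and $\partial_\lambda\Delta(\lambda,c)=d_2(e^{\lambda}-e^{-\lambda})-c>0$ for $\lambda>0$, so $\Delta(\cdot,c)>0$ on $[0,\infty)$; and your elementary treatment of the special case $c=0$ (the increments $I(\xi)-I(\xi-1)$ decrease under the unit shift, yet are positive and tend to $0$ at $-\infty$) is complete. The genuine gap is the case $c<0$. Your entire transform argument rests on the a priori estimate that $I$ decays \emph{genuinely} exponentially at $-\infty$, i.e.\ that the abscissa $\alpha_0$ of $L$ is strictly positive; you justify this only by the heuristic that sub-exponential decay would make $cI'$ and $d_2J[I]$ negligible against $\delta I$, which is not a proof (nothing stated rules out $\alpha_0=0$), and it is precisely the step you yourself flag as the main obstacle. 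Worse, the gap is circular: the standard way to produce such decay bounds in this setting --- and the way the paper produces them for Theorem \ref{ThNon}, see (\ref{Equ5.6})--(\ref{Equ5.7}) --- is to integrate the linearized inequality $cI'(\xi)\ge d_2J[I](\xi)+\omega I(\xi)$ (valid near $-\infty$ because $\Re_0>1$) twice from $-\infty$, obtaining $cQ(\xi)\ge d_2\bigl(\int_\xi^{\xi+1}Q-\int_{\xi-1}^{\xi}Q\bigr)+\omega\int_{-\infty}^{\xi}Q$ with $Q(\xi)=\int_{-\infty}^{\xi}I$. For $c>0$ this yields the exponential bounds; for $c\le 0$ the left-hand side is $\le 0$ while the right-hand side is $>0$ since $Q$ is increasing --- a contradiction. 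That two-line computation \emph{is} the paper's entire proof of Lemma \ref{cpositive}: the work needed to fill your gap finishes the proof before the Laplace transform is ever defined, so the transform (the right tool for $0<c<c^*$, where $\Delta$ is positive only on a bounded $\lambda$-interval) is superfluous here.

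Two further flaws sit inside the transform step itself, even granting $0<\alpha_0<\infty$. First, the claim $L(\lambda)\to+\infty$ as $\lambda\uparrow\alpha_0$ is false in general: for a nonnegative left tail like $e^{\alpha_0\xi}(1+\xi^2)^{-1}$ the transform remains bounded at its abscissa; the correct statement (Landau--Pringsheim, as in Widder) is only that $\lambda=\alpha_0$ is a singular point of $L$. Second, with that weaker statement your analytic-continuation fallback needs $\widetilde Q$ to be analytic in a neighbourhood of $\alpha_0$, hence a quantitative bound such as $|Q|\le CI^{1+\eta}$ near $-\infty$; the qualitative $Q=o(I)$ that Assumption \ref{Assum} provides is not enough, since mere continuity of $f(I)/I$ allows $S(\xi)f(I(\xi))/I(\xi)-S_0f'(0)$ to vanish arbitrarily slowly, so the abscissas of $\widetilde Q$ and $L$ may coincide. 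Finally, note that your own $c=0$ trick closes the $c<0$ case once applied one integration down: from $0\ge cI(\xi)\ge d_2J[Q](\xi)+\omega Q(\xi)$ one gets $J[Q]<0$ near $-\infty$, so the positive increments $Q(\xi)-Q(\xi-1)$ increase as $\xi$ decreases yet tend to $0$ --- the same contradiction, and in essence the paper's argument.
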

\begin{proof}
Assume that $c\leq0$.
Since $S(\xi)\rightarrow S_0\ \ {\rm and}\ \ I(\xi)\rightarrow 0\ \ {\rm as}\ \ \xi\rightarrow-\infty$, there exists a $\xi^*<0$ such that
\begin{equation}\label{Equ5.1}
cI'(\xi)\geq d_2[I(\xi+1)+I(\xi-1)-2I(\xi)] + \frac{\beta S_0 f'(0)+\mu_2}{2f'(0)}(f'(0)-\epsilon)I(\xi) - \mu_2 I(\xi),
\end{equation}
here we used the condition $\Re_0>1$. Note that inequality (\ref{Equ5.1}) is valid for any $\epsilon\in(0,f'(0))$, then for $\xi<\xi^*$, we have
\begin{equation}\label{Equ5.2}
cI'(\xi)\geq d_2[I(\xi+1)+I(\xi-1)-2I(\xi)] + \frac{\beta S_0 f'(0)-\mu_2}{2}I(\xi).
\end{equation}
Denote $\omega = \frac{\beta S_0 f'(0)-\mu_2}{2}$ and $Q(\xi) = \int_{-\infty}^\xi I(y)\d y$ for $\xi\in \mathbb{R}$, note that $\omega>0$ since $\Re_0>1$.
Integrating inequality (\ref{Equ5.1}) from $-\infty$ to $\xi$ and using $I(-\infty)=0$, one has that
\begin{equation}\label{Equ5.3}
cI(\xi) \geq d_2[Q(\xi+1)+Q(\xi-1)-2Q(\xi)] + \omega Q(\xi)\ \ \textrm{for}\ \ \xi<\xi^*.
\end{equation}
Again, Integrating inequality (\ref{Equ5.1}) from $-\infty$ to $\xi$, yields
\begin{equation}\label{Equ5.4}
cQ(\xi) \geq d_2\left(\int_\xi^{\xi+1}Q(\tau)\d\tau - \int_{\xi-1}^{\xi}Q(\tau)\d\tau\right) + \omega \int_{-\infty}^\xi Q(\tau)\d\tau\ \ \textrm{for}\ \ \xi<\xi^*.
\end{equation}
Since $Q(\xi)$ is strictly increasing in $\mathbb{R}$ and $c\leq0$, we can conclude that
\[
0\geq c Q(\xi)\geq d_2\left(\int_\xi^{\xi+1}Q(\tau)\d\tau - \int_{\xi-1}^{\xi}Q(\tau)\d\tau\right) + \omega \int_{-\infty}^\xi Q(\tau)\d\tau>0,
\]
which is a contradiction. Hence $c>0$. The proof is finished.
\end{proof}

Now, we are in position to show the nonexistence of traveling wave solutions, we will use two-sided Laplace to prove it (see \cite{BaiZhangCNSNS2015,YangLiLiWangDCDSB2013,ZhouSongWeiJDE2019})
\begin{thm}\label{ThNon}
Assume that $\Re_0>1$ and $c<c^*$. Then there are no nontrivial solution $(S(\xi), I(\xi))$ of system (\ref{Model}) satisfying the asymptotic boundary conditions (\ref{Bound1}) and (\ref{Bound2}).
\end{thm}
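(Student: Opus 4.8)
The plan is to argue by contradiction via the two-sided Laplace transform, the tool flagged before the statement. Suppose, contrary to the claim, that for some $0<c<c^*$ there is a nontrivial solution $(S,I)$ of \eqref{Model} obeying \eqref{Bound1}--\eqref{Bound2}; by Lemma \ref{cpositive} we already know $c>0$, so this is the only regime to exclude. The whole argument rests on one sign fact about $\Delta$: since $\partial\Delta/\partial c=-\lambda<0$ for $\lambda>0$, lowering $c$ below $c^*$ strictly raises $\Delta$, so by Lemma \ref{WaveSpeed} we have $\Delta(\lambda,c)>0$ for \emph{every} $\lambda>0$ when $0<c<c^*$; in particular $\Delta(\cdot,c)$ has no positive real zero. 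The contradiction will come from showing that the transform of $I$ must nevertheless carry a singularity at a positive zero of $\Delta$.

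Because $I$ is bounded and $I(\xi)\to I^*$ as $\xi\to+\infty$, the integral $\int_0^{+\infty}e^{-\lambda\xi}I(\xi)\,\d\xi$ converges for every $\lambda>0$, so the only issue is the tail at $-\infty$. The delicate preliminary step is therefore to prove that $I$ decays exponentially as $\xi\to-\infty$, i.e. $0<I(\xi)\le Ke^{\mu\xi}$ on $(-\infty,0]$ for some $K,\mu>0$, so that
\[
\mathcal{I}(\lambda):=\int_{\mathbb{R}}e^{-\lambda\xi}I(\xi)\,\d\xi
\]
converges on a nondegenerate strip $0<\lambda<\lambda^{**}$ with $\lambda^{**}>0$. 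This is the main obstacle, and it is genuinely subtle in the sub-critical regime: precisely because $\Delta(\cdot,c)>0$, there is \emph{no} exponential supersolution, so the decay cannot be read off a naive comparison. I would instead extract it from the renewal/integral form of the $I$-equation together with the positivity of $I$ and the integrated inequalities of the type used in the proof of Lemma \ref{cpositive}. That same integral form, in which $I(\xi)$ is fed by the lagged term $d_2I(\xi-1)$, yields a matching lower bound $I(\xi)\ge\kappa\,I(\xi-1)$ near $-\infty$ that rules out super-exponential decay and hence forces $\lambda^{**}<+\infty$.

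Next I would apply the transform to the second equation of \eqref{WaveEqu}, written in the form $cI'-d_2J[I]+\mu_2 I-\beta S_0f'(0)I=-N$ with $N:=\beta S_0f'(0)I-\beta Sf(I)$. Using integration by parts and the shift identities $\int_{\mathbb{R}}e^{-\lambda\xi}I(\xi\pm1)\,\d\xi=e^{\pm\lambda}\mathcal{I}(\lambda)$, the left-hand side transforms to $-\Delta(\lambda,c)\mathcal{I}(\lambda)$, giving the key identity
\[
\Delta(\lambda,c)\,\mathcal{I}(\lambda)=\Phi(\lambda),\qquad \Phi(\lambda):=\int_{\mathbb{R}}e^{-\lambda\xi}N(\xi)\,\d\xi,
\]
valid on the strip of convergence. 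By concavity of $f$ and $S\le S_0$ we have $N\ge0$, and since $S(\xi)\to S_0$ and $f(I)/I\to f'(0)$ as $\xi\to-\infty$ we have $N(\xi)=o(I(\xi))$ there; quantifying this shows $\Phi$ converges on a strip strictly larger than $(0,\lambda^{**})$.

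Finally I would run the abscissa-of-convergence argument. Since $I\ge0$, the classical Widder property says that if the abscissa $\lambda^{**}$ is finite it must be a singular point of the analytic function $\mathcal{I}$. But $\mathcal{I}(\lambda)=\Phi(\lambda)/\Delta(\lambda,c)$, in which $\Phi$ is analytic on a neighborhood of $\lambda^{**}$ and $\Delta(\cdot,c)$ is entire with no zero on $(0,+\infty)$; hence $\mathcal{I}$ extends analytically across $\lambda^{**}$, contradicting that $\lambda^{**}$ is singular. Combined with $\lambda^{**}<+\infty$ from the second step, this contradiction shows no such solution exists for $0<c<c^*$, which proves the theorem. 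The two places that demand real work are the exponential decay estimate at $-\infty$ and the verification that $N$ decays fast enough to push the convergence of $\Phi$ strictly past $\lambda^{**}$, so that the continuation genuinely crosses the abscissa.
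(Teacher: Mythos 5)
Your overall strategy is the one the paper follows: contradiction via the two-sided Laplace transform, using (i) positivity of $\Delta(\cdot,c)$ on $(0,\infty)$ when $0<c<c^*$, (ii) exponential decay of $I$ at $-\infty$ extracted from the integrated inequality in the proof of Lemma \ref{cpositive} (this is exactly how the paper obtains $\sup_\xi I(\xi)e^{-\mu_0\xi}<\infty$), (iii) the identity $\Delta(\lambda,c)\mathcal{I}(\lambda)=\Phi(\lambda)$, (iv) a quantitative bound on $N=\beta S_0f'(0)I-\beta Sf(I)$ pushing the convergence of $\Phi$ to a strictly wider strip, and (v) Widder's singularity property. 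The genuine gap is in your endgame. You close by asserting $\lambda^{**}<+\infty$, justified by the inequality $I(\xi)\ge\kappa\,I(\xi-1)$ ``fed by the lagged term $d_2I(\xi-1)$''. That inequality points the wrong way: iterating it gives $I(-n)\le\kappa^{-n}I(0)$, an \emph{upper} bound on the left tail, which says nothing about how fast $I$ may decay (indeed $I$ is increasing near $-\infty$, cf.\ Lemma \ref{lem6}, so your inequality holds trivially with $\kappa=1$). What rules out super-exponential decay is the opposite inequality $I(\xi-1)\ge\kappa\,I(\xi)$, equivalently an upper bound on the forward ratio $I(\xi+1)/I(\xi)$; iterating that one gives the lower bound $I(-n)\ge\kappa^{n}I(0)$ and hence $\lambda^{**}\le\ln(1/\kappa)<+\infty$. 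This correct bound is true, but it comes from the \emph{advance} term $d_2I(\xi+1)$ via the two-step integration argument of Lemma \ref{lem2} (Appendix A), not from the lag term as you describe. Since your entire contradiction rests on finiteness of the abscissa, the proof as written does not go through: if $\lambda^{**}=+\infty$ your argument yields no contradiction at all.

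Two repairs are available. Either replace your ratio bound by the correct one from Lemma \ref{lem2}, after which your Widder-versus-analytic-continuation contradiction is valid; or do what the paper does, namely allow both cases (finite abscissa or none), use the wider strip of $\Phi$ to continue the identity $\Delta(\lambda,c)\mathcal{I}(\lambda)=\Phi(\lambda)$ to all $\lambda>0$, and then note that $N\le\beta S_0f'(0)I$ pointwise forces $\Delta(\lambda,c)\le\beta S_0f'(0)$ for all $\lambda>0$, contradicting $\Delta(\lambda,c)\to+\infty$ as $\lambda\to+\infty$; this second route never needs $\lambda^{**}<+\infty$. Separately, be aware that your deferred step of ``quantifying'' $N=o(I)$ is essential and cannot stay qualitative: $N=o(I)$ alone only gives convergence of $\Phi$ where $\mathcal{I}$ already converges. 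The paper makes it quantitative via $N\le I^2\le \bigl(\sup_\xi I(\xi)e^{-\mu_0\xi}\bigr)e^{\mu_0\xi}I(\xi)$, which is the form you need to push $\Phi$ past $\lambda^{**}$.
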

\begin{proof}
By way of contradiction, assume that there exists a nontrivial positive solution $(S(\xi), I(\xi))$ of system (\ref{Model}) satisfying the asymptotic boundary condition (\ref{Bound1}) and (\ref{Bound2}). Then $c>0$ by Lemma \ref{cpositive} and
\[
S(\xi)\rightarrow S_0\ \ {\rm and}\ \ I(\xi)\rightarrow 0\ \ {\rm as}\ \ \xi\rightarrow-\infty.
\]
Let $\omega = \frac{\beta S_0 f'(0)-\mu_2}{2}$ and $Q(\xi) = \int_{-\infty}^\xi I(y)\d y$ for $\xi\in \mathbb{R}$. It follows from the proof of Lemma \ref{cpositive}, there exists a $\xi^*<0$, we have
\[
cQ(\xi) \geq d_2\left(\int_\xi^{\xi+1}Q(\tau)\d\tau - \int_{\xi-1}^{\xi}Q(\tau)\d\tau\right) + \omega \int_{-\infty}^\xi Q(\tau)\d\tau\ \ \textrm{for}\ \ \xi<\xi^*.
\]
Recalling that $Q(\xi)$ is strictly increasing in $\mathbb{R}$, one has that
\[
d_2\left(\int_\xi^{\xi+1}Q(\tau)\d\tau - \int_{\xi-1}^{\xi}Q(\tau)\d\tau\right)>0.
\]
Thus
\begin{equation}\label{Equ5.5}
cQ(\xi) \geq \omega \int_{-\infty}^\xi Q(\tau)\d\tau\ \ \textrm{for}\ \ \xi<\xi^*.
\end{equation}
Hence, there exists some constant $\delta>0$ such that
\begin{equation}\label{Equ5.6}
\omega \delta Q(\xi-\delta)\leq c Q(\xi)\ \ \textrm{for}\ \ \xi<\xi^*.
\end{equation}
Moreover, there exists a $\nu>0$ is large enough and $\epsilon_0\in(0,1)$ such that
\begin{equation}\label{Equ5.7}
Q(\xi-\nu)\leq \epsilon_0 Q(\xi)\ \ \textrm{for}\ \ \xi<\xi^*.
\end{equation}
Set
\[
\mu_0:=\frac{1}{\nu}\ln \frac{1}{\epsilon_0}\ \ \ \textrm{and}\ \ \ V(\xi):=Q(\xi)e^{-\mu_0\xi}.
\]
We have
\[
V(\xi-\nu) = Q(\xi-\nu)e^{-\mu_0(\xi-\nu)}<\epsilon_0 Q(\xi)e^{-\mu_0(\xi-\nu)} = V(\xi)\ \ \textrm{for}\ \ \xi<\xi^*,
\]
which implies that $V(\xi)$ is bounded as $\xi\rightarrow-\infty$.
Since $\int_{-\infty}^\infty I(\xi)\d\xi<\infty$, we obtain that
\[
\lim_{\xi\rightarrow\infty}V(\xi) = \lim_{\xi\rightarrow\infty} Q(\xi)e^{-\mu_0\xi} = 0.
\]
From the second equation of (\ref{WaveEqu}), we have
\[
cI'(\xi)\leq d_2[I(\xi+1)+I(\xi-1)-2I(\xi)] +\beta S_0 f'(0)I(\xi) -\mu_2 I(\xi),
\]
integrating over $(-\infty,\xi)$, give us
\[
cI(\xi)\leq d_2[Q(\xi+1)+Q(\xi-1)-2Q(\xi)] + \beta S_0 f'(0)Q(\xi) -\mu_2 Q(\xi).
\]
Hence, we can obtain that
\[
\sup_{\xi\in\mathbb{R}} \left\{I(\xi)e^{-\mu_0 \xi}\right\}<\infty\ \ \ \textrm{and}\ \ \ \sup_{\xi\in\mathbb{R}} \left\{I'(\xi)e^{-\mu_0 \xi}\right\}<\infty.
\]
For $\lambda\in\mathbb{C}$ with $0<\textrm{Re}\lambda<\mu_0$, define the following two-sided Laplace transform of $I(\cdot)$ by
\[
\mathcal{L}(\lambda):=\int_{-\infty}^\infty e^{-\lambda \xi}I(\xi)\d\xi.
\]
Note that
\begin{align*}
&\ \int_{-\infty}^\infty e^{-\lambda\xi}[I(\xi+1)+I(\xi-1)]\d\xi\\
=&\ e^\lambda \int_{-\infty}^\infty e^{-\lambda(\xi+1)}I(\xi+1)\d\xi + e^{-\lambda} \int_{-\infty}^\infty e^{-\lambda(\xi-1)}I(\xi-1)\d\xi\\
=&\ \left(e^\lambda+e^{-\lambda}\right) \mathcal{L}(\lambda)
\end{align*}
and
\begin{align*}
\int_{-\infty}^\infty e^{-\lambda\xi}I'(\xi)\d\xi
=e^\lambda I(\xi)\bigg|_{-\infty}^\infty - \int_{-\infty}^\infty I(\xi)\d e^{-\lambda\xi}
=\lambda \mathcal{L}(\lambda).
\end{align*}
From the second equation of (\ref{WaveEqu}), we have
\begin{equation}\label{Equ5.8}
d_2J[I](\xi) + \beta S_0 f'(0) I(\xi) - \mu_2 I(\xi) - cI'(\xi) =\beta S_0 f'(0) I(\xi) -\beta S(\xi)f(I(\xi)).
\end{equation}
Taking two-sided Laplace transform on (\ref{Equ5.8}), give us
\begin{equation}\label{Equ5.9}
\Delta(\lambda,c)\mathcal{L}(\lambda) =  \int_{-\infty}^\infty e^{-\lambda\xi} \left[\beta S_0 f'(0) I(\xi) -\beta S(\xi)f(I(\xi))\right]\d\xi.
\end{equation}
It follows from the proof in Lemma \ref{LemLowI}, as $\xi\rightarrow-\infty$, we have
\begin{align*}
\left[\beta S_0 f'(0) I(\xi) -\beta S(\xi)f(I(\xi))\right] e^{-2\mu_0\xi} \leq &\ I^2(\xi) e^{-2\mu_0\xi}\\
\leq &\ \left(\sup_{\xi\in\mathbb{R}}\left\{I(\xi)e^{-\mu_0\xi}\right\}\right)^2\\
\leq &\ \infty.
\end{align*}
Thus, we can obtain that
\begin{equation}\label{Equ5.10}
\sup_{\xi\in\mathbb{R}}\left[\beta S_0 f'(0) I(\xi) -\beta S(\xi)f(I(\xi))\right] e^{-2\mu_0\xi}<\infty.
\end{equation}
By the property of Laplace transform \cite{Widder1941}, either there exist a real number $\mu_0$ such that $\mathcal{L}(\lambda)$ is analytic for $\lambda\in\mathbb{C}$ with $0<\textrm{Re} \lambda < \mu_0$ and $\lambda=\mu_0$ is singular point of $\mathcal{L}(\lambda)$, or $\mathcal{L}(\lambda)$ is well defined for $\lambda\in\mathbb{C}$ with $\textrm{Re} \lambda>0$. Furthermore,
the two Laplace integrals can be analytically continued to the whole right half line;
otherwise, the integral on the left of (\ref{Equ5.9}) has singularity at $\lambda = \mu_0$ and it is analytic for all $\lambda<\mu_0$. However, it follows from (\ref{Equ5.10}) that the integral on the right of (\ref{Equ5.9}) is actually analytic for all $\lambda\leq 2 \mu_0$, a contradiction. Thus, (\ref{Equ5.9}) holds for all $\textrm{Re}\lambda>0$.
From Lemma \ref{WaveSpeed}, $\Delta(\lambda,c)>0$ for all $\lambda>0$ and by the definition of $\Delta(\lambda,c)$ in (\ref{Delta}), we know that $\Delta(\lambda,c)\rightarrow\infty$ as $\lambda\rightarrow\infty$, which is a contradiction with Equation (\ref{Equ5.9}).
This ends the proof.
\end{proof}

\section{Application and discussion}\label{sec:dis}
As an application, we consider the following two discrete diffusive epidemic models. The first one is a model with saturated incidence rate which has been wildly used in epidemic modeling (see, for example, \cite{LiLiYangAMC2014,ZhangWuIJB2019,xu2009global,zhang2018behavior,XuGuoMMAS2019}).

\begin{ex}
Discrete diffusive epidemic model with saturated incidence rate:
\begin{equation}
\label{Eg1}\left\{
\begin{array}{l}
\vspace{2mm}
\displaystyle   \frac{\d S_n(t)}{\d t}= d_1[S_{n+1}(t) + S_{n-1}(t) - 2S_n(t)] + \Lambda - \frac{\beta S_n(t)I_n(t)}{1+\alpha I_n(t)} - \mu_1 S_n(t),\\
\displaystyle   \frac{\d I_n(t)}{\d t}= d_2[I_{n+1}(t) + I_{n-1}(t) - 2I_n(t)] + \frac{\beta S_n(t)I_n(t)}{1+\alpha I_n(t)}  - \gamma I_n(t) - \mu_1 I_n(t),
\end{array}\right.
\end{equation}
where $\beta I_n(t)$ is the force of infection and $\frac{1}{1+\alpha I_n(t)}$ measures the inhibition effect which is dependent on the infected individuals.
\end{ex}

Setting $f(I_n(t)) = \frac{\beta S_n(t)I_n(t)}{1+\alpha I_n(t)}$ in the original system \eqref{Model}, we can easily see that \eqref{Eg1} is a special case of \eqref{Model}.
In fact, it is obvious that $f(I_n(t))$ satisfy Assumption \ref{Assum}. The disease-free equilibrium of system \eqref{Eg1} is similar to the original one, which is $\tilde{E}_{0}=(S_0,0)$. Moreover, we obtain the basic reproduction number of system \eqref{Eg1} as $\Re_{1} = \frac{\beta S_0}{\gamma+\mu_1}$ and there exists a positive equilibrium
$\tilde{E}^{*}=(\tilde{S}^{*},\tilde{I}^{*})$ if $\Re_{1}>1$, where
\[
\tilde{S}^{*} = \frac{\alpha\Lambda+\gamma+\mu_1}{\beta+\alpha\mu_1}\ \ \ {\rm and}\ \ \ \tilde{I}^{*} = \frac{\Lambda\beta - \mu_1(\gamma+\mu_1)}{(\gamma+\mu_1)(\beta+\alpha\mu_1)}.
\]

Hence, from Theorems \ref{MainTh}, \ref{Thcstar} and \ref{ThNon}, we obtain the following corollary
\begin{cor}\label{Cor1}
Assume that $\Re_1>1$. Then there exists some $c^*>0$ such that for any $c\geq c^*$, system \eqref{Eg1} admits a traveling wave solution $(S(\xi),I(\xi))$ satisfying
\begin{equation}\label{AC1}
\lim_{\xi\rightarrow-\infty}(S(\xi), I(\xi))=(S_0, 0)\ \ {\rm and}\ \ \lim_{\xi\rightarrow+\infty}(S(\xi), I(\xi))=(\tilde{S}^{*},\tilde{I}^{*}).
\end{equation}
Furthermore, system \eqref{Eg1} admits no traveling wave solutions satisfying \eqref{AC1} when $c<c^*$.
\end{cor}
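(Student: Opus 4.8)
The plan is to recognize Corollary~\ref{Cor1} as a direct specialization of Theorems~\ref{MainTh}, \ref{Thcstar} and \ref{ThNon} to the incidence function $f(I)=\frac{I}{1+\alpha I}$, so that the entire argument reduces to checking that this $f$ satisfies Assumption~\ref{Assum} and that the threshold $\Re_1$ together with the equilibria of \eqref{Eg1} coincide with the objects $\Re_0$, $\tilde E_0$, $\tilde E^*$ governing the general theory. Once these identifications are made, no new analysis is needed and the three regimes $c>c^*$, $c=c^*$, $c<c^*$ are handled by the three theorems respectively.

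First I would verify Assumption~\ref{Assum} for $f(I)=\frac{I}{1+\alpha I}$. For (A1): $f(0)=0$, $f(I)>0$ for $I>0$ (hence $f(I)=0$ if and only if $I=0$), and $f'(I)=\frac{1}{(1+\alpha I)^2}>0$ for all $I\geq 0$. For (A2): the ratio $\frac{f(I)}{I}=\frac{1}{1+\alpha I}$ is continuous and monotonously non-increasing on $[0,\infty)$, and $\lim_{I\to 0^+}\frac{f(I)}{I}=1$ exists; in particular $f'(0)=1$.

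Next I would match the thresholds and equilibria. With $\mu_2=\gamma+\mu_1$ and $S_0=\Lambda/\mu_1$, the general reproduction number $\Re_0=\frac{\beta S_0 f'(0)}{\mu_2}$ reduces to $\frac{\beta S_0}{\gamma+\mu_1}=\Re_1$, so the hypothesis $\Re_1>1$ is exactly the condition $\Re_0>1$ required by all three theorems. The disease-free equilibrium is $\tilde E_0=(S_0,0)$, matching the general setting. Substituting $f(I)=\frac{I}{1+\alpha I}$ into the defining relations \eqref{EStar} and solving the resulting algebraic system yields the endemic equilibrium $\tilde E^*=(\tilde S^*,\tilde I^*)$ with the stated closed forms; a short computation shows $\tilde I^*>0$ if and only if $\Lambda\beta>\mu_1(\gamma+\mu_1)$, which is precisely $\Re_1>1$, so $\tilde E^*$ exists and is positive under our hypothesis.

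With every hypothesis of the general framework confirmed, the conclusion is immediate: Theorems~\ref{MainTh} and \ref{Thcstar} provide, for each $c\geq c^*$, a nontrivial traveling wave solution of \eqref{Eg1} connecting $(S_0,0)$ at $-\infty$ to $(\tilde S^*,\tilde I^*)$ at $+\infty$, which is exactly \eqref{AC1}, while Theorem~\ref{ThNon} rules out any such solution for $c<c^*$. There is no genuine obstacle in this specialization; the only mild bookkeeping is the algebra verifying that the displayed $\tilde E^*$ solves \eqref{EStar} and is positive precisely when $\Re_1>1$, which is routine.
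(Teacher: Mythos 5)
Your proposal is correct and follows essentially the same route as the paper: recognize \eqref{Eg1} as the special case $f(I)=\frac{I}{1+\alpha I}$ of system \eqref{Model}, check Assumption \ref{Assum}, identify $\Re_1$ with $\Re_0$ (via $f'(0)=1$) and $(\tilde S^*,\tilde I^*)$ with the solution of \eqref{EStar}, and then invoke Theorems \ref{MainTh}, \ref{Thcstar} and \ref{ThNon}. Your write-up is in fact slightly more careful than the paper's, which declares the verification of Assumption \ref{Assum} and the equilibrium computation to be obvious, and it also correctly uses $f(I)=\frac{I}{1+\alpha I}$ where the paper's text contains a harmless typo ($f(I_n(t))=\frac{\beta S_n(t)I_n(t)}{1+\alpha I_n(t)}$).
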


The next example was studied in \cite{ChenGuoHamelNon2017}, and our results will solve the open problem proposed in \cite{ChenGuoHamelNon2017}, which is the traveling wave solutions converge to the endemic equilibrium as $\xi\rightarrow+\infty$ for discrete diffusive system (\ref{PreModel}).
\begin{ex}
Discrete diffusive epidemic model with mass action infection mechanism:
\begin{equation}
\label{Eg2}\left\{
\begin{array}{l}
\vspace{2mm}
\displaystyle   \frac{\d S_n(t)}{\d t}= d_1[S_{n+1}(t) + S_{n-1}(t) - 2S_n(t)] + \Lambda - \beta S_n(t)I_n(t) - \mu_1 S_n(t),\\
\displaystyle   \frac{\d I_n(t)}{\d t}= d_2[I_{n+1}(t) + I_{n-1}(t) - 2I_n(t)] + \beta S_n(t)I_n(t)  - \gamma I_n(t) - \mu_1 I_n(t).
\end{array}\right.
\end{equation}
\end{ex}

Setting $f(I_n(t)) = \beta S_n(t)I_n(t)$ in the original system \eqref{Model}, we can easily see that \eqref{Eg2} is a special case of \eqref{Model} and this model has been studied in \cite{ChenGuoHamelNon2017}.
The disease-free equilibrium of system \eqref{Eg2} is $\bar{E}_{0}=(S_0,0)$. Moreover, we obtain the basic reproduction number of system \eqref{Eg2} is the same with \eqref{Eg1} as $\Re_{1} = \frac{\beta S_0}{\gamma+\mu_1}$ and there exists a positive equilibrium
$\bar{E}^{*}=(\bar{S}^{*},\bar{I}^{*})$ if $\Re_{1}>1$, where
\[
\bar{S}^{*} = \frac{\gamma+\mu_1}{\beta}\ \ \ {\rm and}\ \ \ \bar{I}^{*} = \frac{\Lambda - \mu_1\bar{S}^*}{\beta\bar{S}^*}.
\]

Then, from Theorems \ref{MainTh}, \ref{Thcstar} and \ref{ThNon}, we obtain the following corollary
\begin{cor}\label{Cor2}
Assume that $\Re_1>1$. Then there exists some $c^*>0$ such that for any $c\geq c^*$, system \eqref{Eg2} admits a traveling wave solution $(S(\xi),I(\xi))$ satisfying
\begin{equation}\label{AC2}
\lim_{\xi\rightarrow-\infty}(S(\xi), I(\xi))=(S_0, 0)\ \ {\rm and}\ \ \lim_{\xi\rightarrow+\infty}(S(\xi), I(\xi))=(\bar{S}^{*},\bar{I}^{*}).
\end{equation}
Furthermore, system \eqref{Eg2} admits no traveling wave solutions satisfying \eqref{AC2} when $c<c^*$.
\end{cor}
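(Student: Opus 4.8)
The plan is to obtain Corollary \ref{Cor2} as a direct specialization of the general existence theorems (Theorems \ref{MainTh} and \ref{Thcstar}) and the nonexistence theorem (Theorem \ref{ThNon}), once it is confirmed that system \eqref{Eg2} is exactly the instance of \eqref{Model} corresponding to the bilinear incidence $f(I) = I$. Thus the whole corollary reduces to two routine verifications: that $f(I) = I$ meets Assumption \ref{Assum}, and that the threshold and equilibria of the general theory specialize to $\Re_1$ and $(\bar{S}^{*}, \bar{I}^{*})$.

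First I would check Assumption \ref{Assum} for $f(I) = I$. Condition (A1) is immediate, since $f(I) = I \geq 0$ with $f'(I) = 1 > 0$ for all $I \geq 0$, and $f(I) = 0$ if and only if $I = 0$. For (A2), the quotient $f(I)/I \equiv 1$ is continuous and (weakly) non-increasing on $[0,\infty)$, and $\lim_{I\to 0^+} f(I)/I = 1$ exists. Hence every hypothesis invoked in Sections 2 and 3 is in force for system \eqref{Eg2}.

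Next I would match the threshold and the equilibria. Since $f'(0) = 1$ and $\mu_2 = \gamma + \mu_1$ for \eqref{Eg2}, the basic reproduction number of Section 1 becomes $\Re_0 = \beta S_0 f'(0)/\mu_2 = \beta S_0/(\gamma+\mu_1) = \Re_1$, so the hypothesis $\Re_1 > 1$ is precisely $\Re_0 > 1$. Solving the equilibrium system \eqref{EStar} with $f(I^*) = I^*$, the second equation forces $\beta S^* = \gamma + \mu_1$, i.e. $S^* = (\gamma+\mu_1)/\beta = \bar{S}^{*}$, and substituting into the first gives $I^* = (\Lambda - \mu_1 \bar{S}^{*})/(\beta \bar{S}^{*}) = \bar{I}^{*}$. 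Thus the endemic equilibrium $(S^*, I^*)$ of the general theory coincides with $(\bar{S}^{*}, \bar{I}^{*})$, and one checks that $\Re_1 > 1$ is equivalent to $\bar{I}^{*} > 0$, so the equilibrium is genuinely positive.

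With these identifications, Theorems \ref{MainTh} and \ref{Thcstar} apply verbatim and produce, for every $c \geq c^*$, a nontrivial traveling wave of \eqref{Eg2} with $\lim_{\xi\to-\infty}(S,I) = (S_0, 0)$ and $\lim_{\xi\to+\infty}(S,I) = (\bar{S}^{*}, \bar{I}^{*})$, which is exactly \eqref{AC2}; and Theorem \ref{ThNon} rules out such a wave when $c < c^*$. The only point worth flagging is that the bilinear $f(I) = I$ is unbounded, so one lands in Case 2 ($\lim_{x\to+\infty} f(x) = +\infty$) of Step 3 in the proof of Theorem \ref{MainTh}; this is the delicate regime, but the a priori bounds of Lemmas \ref{lem2}--\ref{lem6} are established there in full generality, so no extra argument is required. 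I therefore anticipate no genuine obstacle beyond confirming these routine identifications, and the corollary follows immediately from the three cited theorems.
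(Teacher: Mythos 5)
Your proposal is correct and takes essentially the same route as the paper: check that $f(I)=I$ satisfies Assumption \ref{Assum}, identify $\Re_0=\Re_1$ and solve \eqref{EStar} to get $(S^*,I^*)=(\bar{S}^{*},\bar{I}^{*})$, then invoke Theorems \ref{MainTh}, \ref{Thcstar} and \ref{ThNon}. Your added remark that the bilinear incidence is unbounded and therefore lands in Case 2 of Step 3 of Theorem \ref{MainTh} (covered by Lemmas \ref{lem2}--\ref{lem6}) is a detail the paper leaves implicit, but no new argument is needed.
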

Note that Corollary \ref{Cor2} could answer the open problem proposed in \cite{ChenGuoHamelNon2017}, that is the traveling wave solutions for system \eqref{PreModel3} converge to the endemic equilibrium at $+\infty$.

Next, we show that how the parameters affect wave speed. Suppose $(\hat{\lambda},\hat{c})$ be a zero root of $\Delta(\lambda,c)$ which defined in \eqref{Delta}, a direct calculation yields
\[
\frac{\d \hat{c}}{\d \beta} = \frac{S_0f'(0)}{\lambda}>0,\ \ \frac{\d \hat{c}}{\d d_2} = \frac{e^{\lambda}+e^{-\lambda}-2}{\lambda}>0\ \ {\rm and}\ \ \frac{\d \hat{c}}{\d \Re_0} = \frac{\mu_2}{\lambda}>0.
\]
that is, $\hat{c}$ is an increasing function on $\beta$, $d_2$ and $\Re_0$. Biologically, this means that the diffusion and infection ability of infected individuals can accelerate the speed of disease spreading.

Now, we are in a position to make the following summary:

In this paper, a discrete diffusive epidemic model with nonlinear incidence rate has been investigated.
When the basic reproduction number $\Re_0>1$, we proved that there exists a critical wave speed $c^*>0$, such that for each $c \geq c^*$ the system (\ref{Model}) admits a nontrivial traveling wave solution. Moreover, we used a Lyapunov functional to establish the convergence of traveling wave solutions at $+\infty$. We also showed the nonexistence nontrivial traveling wave solutions when $\Re_0>1$ and $c<c^*$. As special example of the model \eqref{Model}, we considered two different discrete diffusive epidemic model and apply our general results to show the conditions of existence and nonexistence of traveling wave solutions for the model \eqref{Eg1}.
One of the example is studied in \cite{ChenGuoHamelNon2017} and our result solved the open problem proposed in \cite{ChenGuoHamelNon2017}, which is the traveling wave solutions converge to the endemic equilibrium as $\xi\rightarrow+\infty$ for discrete diffusive system (\ref{PreModel}).

Here we mention some functions $f(I)$ considered
in the literature that do not satisfy Assumptions \ref{Assum}. For example, the incidence rates with media impact  $f(I) = I e^{-mI}$ in \cite{CuiSunZhuJDDE2008}; the specific incidence rate $f(I) = \frac{kI}{1 + \alpha I^2}$ in \cite{XiaoRuanMB2007}; and the nonmonotone incidence rate $f(I) = \frac{kI}{1 + \beta I + \alpha I^2}$ in \cite{XiaoZhouCAMQ2006}. In a recent paper, Shu et al. \cite{ShuPanWangWuJDDE2019} studied a SIR model with non-monotone incidence rates and without constant recruitment, they investigated the existence and nonexistence of traveling wave solutions. What is the condition of existence and nonexistence of traveling wave solution for our model (\ref{Model}) with non-monotone incidence rates, which will be an interesting question and we leave this for future work.



\section*{Appendix A: Proof of Lemma \ref{lem2}}
\begin{proof}
From the second equation of (\ref{WaveEqu}), one has that
\[
c  I'(\xi) + (2d_2 + \mu_2) I(\xi) = d_2 I(\xi+1) +  d_2 I(\xi-1) + \beta S(\xi) f( I(\xi)) >0.
\]
Denote $U(\xi) = e^{\nu \xi} I(\xi)$, where $\nu = (2d_2 + \mu_2)/c$. It follows that
\[
c U'(\xi) = e^{\nu \xi}(c I'(\xi) + (2d_2 + \mu_2) I(\xi))>0,
\]
thus $U(\xi)$ is increasing on $\xi$. Then $U(\xi-1) < U(\xi)$, that is
\[
\frac{ I(\xi-1)}{ I(\xi)} < e^\nu\ \ {\rm for}\ \ {\rm all}\ \ \xi\in\mathbb{R}.
\]
Note that
\begin{align}\label{Equ3}
\nonumber\left[e^{\nu\xi} I(\xi)\right]' & = \frac{1}{c}e^{\nu\xi}\left[d_2 I(\xi+1) + d_2 I(\xi-1) + \beta S(\xi) f(I(\xi))\right]\\
& > \frac{d_2}{c}e^{\nu\xi} I(\xi+1).
\end{align}
Integrating (\ref{Equ3}) over $[\xi,\xi+1]$ and using the fact that $e^{\nu \xi} I(\xi)$ is increasing, we have
\begin{align*}
e^{\nu(\xi+1)} I(\xi+1)\ > & \ e^{\nu\xi} I(\xi) + \frac{d_2}{c}\int_\xi^{\xi+1}e^{\nu s} I(s+1)\d s\\
> & \ e^{\nu\xi} I(\xi) + \frac{d_2}{c}\int_\xi^{\xi+1}e^{\nu (\xi+1)} I(\xi+1)e^{-\nu}\d s\\
= & \ e^{-\nu}\left[ I(\xi) + \frac{d_2}{c} I(\xi+1)\right].
\end{align*}
By (\ref{Equ3}), we obtain
\begin{equation}\label{Equ4}
\left[e^{\nu\xi} I(\xi)\right]' > \left(\frac{d_2}{c}\right)^2 e^{-2\nu}e^{\nu(\xi+1)} I(\xi+1).
\end{equation}
Integrating (\ref{Equ4}) over $[\xi-\frac{1}{2},\xi]$, yields
\begin{align*}
e^{\nu\xi} I(\xi) > & \left(\frac{d_2}{c}\right)^2 e^{-2\nu}\int_{\xi-\frac{1}{2}}^{\xi}e^{\nu(s+1)} I(s+1)\d s\\
> & \left(\frac{d_2}{c}\right)^2 \frac{e^{-2\nu}}{2} e^{\nu(\xi+\frac{1}{2})} I\left(\xi+\frac{1}{2}\right),
\end{align*}
that is
\[
\frac{ I\left(\xi+\frac{1}{2}\right)}{ I(\xi)} < 2 \left(\frac{c}{d_2}\right)^2 e^{\frac{3}{2}\nu}\ \ {\rm for}\ \ {\rm all}\ \ \xi\in\mathbb{R}.
\]
Similarly, integrating (\ref{Equ4}) over $[\xi, \xi+\frac{1}{2}]$, we have
\[
\frac{ I(\xi+1)}{ I\left(\xi+\frac{1}{2}\right)} < 2 \left(\frac{c}{d_2}\right)^2 e^{\frac{3}{2}\nu}\ \ {\rm for}\ \ {\rm all}\ \ \xi\in\mathbb{R}.
\]
Thus
\[
\frac{ I(\xi+1)}{ I(\xi)} = \frac{ I\left(\xi+\frac{1}{2}\right)}{ I(\xi)} \frac{ I(\xi+1)}{ I\left(\xi+\frac{1}{2}\right)} < 4 \left(\frac{c}{d_2}\right)^4 e^{3\nu}\ \ {\rm for}\ \ {\rm all}\ \ \xi\in\mathbb{R}.
\]
By the second equation of (\ref{WaveEqu}), it follows that
\begin{align*}
c \frac{I'(\xi)}{I(\xi)}  = & \frac{I(\xi+1)}{I(\xi)} + \frac{I(\xi-1)}{I(\xi)} + \beta S(\xi) \frac{f(I(\xi))}{I(\xi)} - (2d_2 + \mu_2)\\
\leq & \frac{I(\xi+1)}{I(\xi)} + \frac{ I(\xi-1)}{I(\xi)} + \beta S_0 f'(0) - (2d_2 + \mu_2),
\end{align*}
which gives us $\frac{ I'(\xi)}{ I(\xi)}$ is bounded in $\mathbb{R}$. The proof is end.
\end{proof}

\section*{Appendix B: Proof of Lemma \ref{lem3}}
\begin{proof}
Assume that there is a subsequence of $\{\xi_k\}_{k\in\mathbb{N}}$ again denoted by $\xi_k$, such that $ I_k(\xi_k)\rightarrow+\infty$ as $k\rightarrow+\infty$ and $S_k(\xi_k)\geq\varepsilon$ in $\mathbb{R}$ for all $k\in\mathbb{N}$ with some positive constant $\varepsilon$. From the first equation of (\ref{WaveEqu}), we have
\[
S'_k(\xi)\leq \frac{2S_0+\Lambda}{\tilde{c}} := \delta_0\ \ \textrm{in}\ \ \mathbb{R},
\]
where $\tilde{c}$ is a positive lower bound of $\{c_k\}$.
It follows that
\[
S_k(\xi)\geq\frac{\varepsilon}{2},\ \ \ \forall\xi\in[\xi_k-\delta,\xi_k]
\]
for all $k\in\mathbb{N}$, where $\delta = \frac{\varepsilon}{\delta_0}$.
By Lemma \ref{lem2}, we can assume that $\bigg|\frac{ I'_k(\xi)}{ I_k(\xi)}\bigg|<C_0$ for some $C_0>0$.
Then
\[
\frac{ I_k(\xi_k)}{ I_k(\xi)} = \exp\left\{\int_{\xi}^{\xi_k}\frac{ I'_k(s)}{ I_k(s)}\d s\right\}\leq e^{C_0\delta},\ \ \forall\xi\in[\xi_k-\delta, \xi_k]
\]
for all $k\in\mathbb{N}$. Thus
\[
\min_{\xi\in[\xi_k-\delta, \xi_k]} I_k(\xi)\geq e^{-C_0\delta} I_k(\xi_k),
\]
which give us
\[
\min_{\xi\in[\xi_k-\delta, \xi_k]} I_k(\xi) \rightarrow +\infty\ \ \textrm{as}\ \ k\rightarrow+\infty
\]
since $ I_k(\xi_k)\rightarrow+\infty$ as $k\rightarrow+\infty$.
Recalling we assumed that $\lim_{x\rightarrow+\infty}f(x)=+\infty$ in this case, one has that
\[
\max_{\xi\in[\xi_k-\delta, \xi_k]}S'_k(\xi)\leq \delta_0 - \frac{\beta\varepsilon}{2}f(\varpi_k)\rightarrow-\infty\ \ \textrm{as}\ \ k\rightarrow+\infty.
\]
where $\varpi_k:=\min\limits_{\xi\in[\xi_k-\delta, \xi_k]} I_k(\xi)$. Moreover, there exists some $K>0$ such that
\[
S'_k(\xi)\leq - \frac{2S_0}{\delta},\ \ \forall k\geq K\ \ \textrm{and}\ \ \xi\in[\xi_k-\delta, \xi_k].
\]
Note that $S_k<S_0$ in $\mathbb{R}$ for each $k\in\mathbb{N}$. Hence $S_k(\xi_k)\leq-S_0$ for all $k\geq K$, which reduces to a contradiction since $S_k(\xi_k)\geq\varepsilon$ in $\mathbb{R}$ for all $k\in\mathbb{N}$ with some positive constant $\varepsilon$. This completes the proof.
\end{proof}

\section*{Appendix C: Proof of Lemma \ref{lem5}}
\begin{proof}
Assume that $\limsup\limits_{\xi\rightarrow+\infty} I(\xi)=+\infty$, then we have $\lim\limits_{\xi\rightarrow+\infty}S(\xi)=0$ by Lemma \ref{lem3} and Lemma \ref{lem4}.
Set $\theta(\xi)=\frac{ I'(\xi)}{ I(\xi)}$, from the second equation of (\ref{WaveEqu}), we have
\[
c\theta(\xi) = d_2 e^{\int_{\xi}^{\xi+1}\theta(s)\d s} + d_2 e^{\int_{\xi}^{\xi-1}\theta(s)\d s} - (2d_2 + \mu_2) + B(\xi),
\]
where
\[
B(\xi) = \beta S(\xi)\frac{f( I(\xi))}{ I(\xi)}.
\]
It is easy to have that $\lim\limits_{\xi\rightarrow+\infty}B(\xi) = 0$ since $\frac{f( I(\xi))}{I(\xi)}\leq f'(0)$ and $\lim\limits_{\xi\rightarrow+\infty}S(\xi)=0$. By using \cite[Lemma 3.4]{ChenGuoMA2003}, $\theta(\xi)$ has a finite limit $\omega$ at $+\infty$ and satisfies the following equation
\[
h(\omega,c) := d_2\left(e^\omega + e^{-\omega} - 2\right) -c\omega - \mu_2 = 0.
\]
By some calculations, we obtain
\[
h(0,c)<0,\ \ \frac{\partial h(\omega,c)}{\partial \omega}\bigg|_{\omega=0}<0,\ \ \frac{\partial^2 h(\omega,c)}{\partial \omega^2}>0\ \ \textrm{and}\ \ \lim_{\omega\rightarrow+\infty}h(\omega,c)=-\infty.
\]
Thus, there exists a unique positive real root $\omega_0$ of $h(\omega,c) = 0$.
Recall that $\lambda_1$ is the smaller real root of (\ref{Delta}) and $\lambda_2$ is the larger real root of (\ref{Delta}).
From Lemma \ref{WaveSpeed}, one has
\[
d_2\left(e^{\lambda_2} + e^{-\lambda_2} - 2\right) -c\lambda_2 - \mu_2 = -\beta S_0 f'(0)<0,
\]
thus, we have $\lambda_2<\omega_0$. Since $\lim\limits_{\xi\rightarrow+\infty}\theta(\xi) = \omega_0$, then there exists some $\tilde{\xi}$ large enough such that
\[
 I(\xi)\geq C \exp\left\{\left(\frac{\lambda_2+\omega_0}{2}\right)\xi\right\}\ \ {\rm for}\ \ {\rm all}\ \ \xi\geq\tilde{\xi}
\]
with some constant $C$. This is a contradiction since $ I(\xi)\leq e^{\lambda_1\xi}$ in $\mathbb{R}$ and $\lambda_1<\omega_0$.
This ends the proof.
\end{proof}

\section*{Appendix D: Proof of Lemma \ref{lem6}}
\begin{proof}
Assume by way of contradiction that there is no such $\varepsilon_0$, that is there exist some sequence $\{\xi_k\}_{k\in\mathbb{N}}$ with speed $c_k\in(\underline{c},\overline{c})$ such that $ I(\xi_k)\rightarrow0$ as $k\rightarrow+\infty$ and $I'(\xi_k)\leq0$, where $\underline{c}$ and $\overline{c}$ are two given positive real number. Denote
\[
S_k(\xi):= S(\xi_k+\xi)\ \  \textrm{and}\ \  I_k(\xi):=  I(\xi_k+\xi).
\]
Thus we have $ I_k(0)\rightarrow0$ as $k\rightarrow+\infty$ and $ I_k(\xi)\rightarrow0$ locally uniformly in $\mathbb{R}$ as $k\rightarrow+\infty$.
As a consequence, there also holds that $ I_k'(\xi)\rightarrow0$ locally uniformly in $\mathbb{R}$ as $k\rightarrow+\infty$ by the second equation of (\ref{WaveEqu}).
From the proof of \cite[Lemma 3.8]{ChenGuoHamelNon2017}, we can obtain that $S_\infty = S_0$.
Let $\Psi_k(\xi):=\frac{ I_k(\xi)}{ I_k(0)}$. In the view of
\[
\Psi_k'(\xi) = \frac{ I_k'(\xi)}{ I_k(0)} = \frac{ I_k'(\xi)}{ I_k(\xi)}\Psi_k(\xi),
\]
we have that $\Psi_k(\xi)$ and $\Psi_k'(\xi)$ are also locally uniformly in $\mathbb{R}$ as $k\rightarrow+\infty$. Letting $k\rightarrow+\infty$, thus
\[
c_\infty\Psi_\infty'(\xi)= d_2 J[\Psi_\infty](\xi) + \beta S_0 f(\Psi_\infty(\xi)) - \mu_2 \Psi_\infty(\xi).
\]
We claim that $\Psi_\infty(\xi)>0$ in $\mathbb{R}$. In fact, if there exists some $\xi_0$ such that $\Psi_\infty(\xi_0)=0$, we also have ${\Psi}'_\infty(\xi_0)=0$ since $\Psi_\infty(\xi)\geq0$,
then
\[
0 = d_2(\Psi_\infty(\xi_0+1)+\Psi_\infty(\xi_0-1)).
\]
Thus $\Psi_\infty(\xi_0+1) = \Psi_\infty(\xi_0-1) = 0$, it follows that $\Psi_\infty(\xi_0+m) = 0$ for all $m\in\mathbb{Z}$. Recall that
$c_\infty\Psi_\infty'(\xi) \geq  - \mu_2 \Psi_\infty(\xi)$, then the map $\xi\mapsto \Psi_\infty(\xi) e^{\frac{\mu_2\xi}{c_\infty}}$ is nondecreasing. Since it vanishes at $\xi_0+m$ for all $m\in\mathbb{Z}$ and $e^{\frac{\mu_2\xi}{c_\infty}}$ is increasing, one can concluded that $\Psi_\infty = 0$ in $\mathbb{R}$, which is a contradicts with $\Psi_\infty(0) = 1$.

Denote $\mathcal{Z}(\xi):=\frac{\Psi_\infty'(\xi)}{\Psi_\infty(\xi)}$, it is easy to verify that $\mathcal{Z}(\xi)$ satisfies
\begin{equation}\label{Z}
c_\infty \mathcal{Z}(\xi)= d_2 e^{\int^{\xi+1}_\xi \mathcal{Z}(s){\rm d} s}\d y + d_2 e^{\int^{\xi-1}_\xi \mathcal{Z}(s){\rm d} s}\d y - 2d_2 + \beta S_0 f'(0) - \mu_2,
\emph{}\end{equation}
here we use $ I_k\rightarrow0$ and $\frac{f( I_k)}{ I_k}\rightarrow f'(0)$ as $k\rightarrow+\infty$. Recalling \cite[Lemma 3.4]{ChenGuoMA2003}, $\mathcal{Z}(\xi)$ has finite limits $\omega_{\pm}$ as $\xi\rightarrow\pm\infty$, where $\omega_\pm$ are roots of
\[
c_\infty \omega_\pm = d_2\left(e^{\omega_\pm} + e^{-\omega_\pm} -2\right) + \beta S_0 f'(0) - \mu_2.
\]
By the analogous arguments in Lemma \ref{WaveSpeed}, we have $\omega_\pm>0$. Thus $\Psi_\infty'(\xi)$ is positive at $\pm\infty$ by the definition of $\mathcal{Z}(\xi)$.
Moreover, $\Psi_\infty'(\xi)>0$ for all $\xi\in\mathbb{R}$. Indeed, if there exists some $\xi^*$ such that $\mathcal{Z}(\xi^*) = \inf_{\mathbb{R}}\mathcal{Z}(\xi)$,
then $\mathcal{Z}(\xi^*) = 0$. Differentiating (\ref{Z}) gives us
\[
c_\infty \mathcal{Z}'(\xi) = d_2(\mathcal{Z}(\xi+1) - \mathcal{Z}(\xi))\frac{\Psi_\infty(\xi+1)}{\Psi_\infty(\xi)} + d_2(\mathcal{Z}(\xi-1) - \mathcal{Z}(\xi))\frac{\Psi_\infty(\xi-1)}{\Psi_\infty(\xi)},
\]
it follows that
\[
\mathcal{Z}(\xi^*) = \mathcal{Z}(\xi^*+1) = \mathcal{Z}(\xi^*-1).
\]
Hence $\mathcal{Z}(\xi^*) = \mathcal{Z}(\xi^*+m)$ for all $m\in\mathbb{Z}$. Then, there is
\[
\inf_{\mathbb{R}}\mathcal{Z}(\xi)\geq\min\{\mathcal{Z}(+\infty),\mathcal{Z}(-\infty)\}>0.
\]
So $\Psi_\infty'(\xi)>0$. From the definition of $\Psi_\infty(\xi)$, we have
\[
0<\Psi_\infty'(0) = \lim_{k\rightarrow+\infty}\Psi_k'(0) = \lim_{k\rightarrow+\infty}\frac{ I_k'(0)}{ I_k(0)}.
\]
Thus, $ I'(\xi_k) =  I_k'(0)>0$, which is a contradiction. This completes the proof.
\end{proof}


\begin{thebibliography}{10}

\bibitem{AndersonMay1991}
Anderson R.M., May R.M.:
\newblock {Infectious Diseases in Humans: Dynamics and Control}.
\newblock Oxford University Press, Oxford (1991)

\bibitem{BaiZhangCNSNS2015}
Bai Z., Zhang S.:
\newblock {Traveling waves of a diffusive SIR epidemic model with a class of
  nonlinear incidence rates and distributed delay}.
\newblock {Commun. Nonlinear Sci. Numer. Simulat.} 22, 1370--1381 (2015)

\bibitem{BatesChmajARMA1999}
Bates P.W., Chmaj A.:
\newblock A discrete convolution model for phase transitions.
\newblock {Arch. Ration. Mech. Anal.} 150, 281--305 (1999)

\bibitem{BriggsGodfray1995}
Briggs C.J., Godfray H.C.J.:
\newblock {The dynamics of insect-pathogen interactions in stage-structured
  populations}.
\newblock {Am. Nat.} 145, 855--887 (1995)

\bibitem{HallareVleckSIAMADS2011}
Brucal-Hallare M., Vleck E.V.:
\newblock {Traveling wavefronts in an antidiffusion lattice Nagumo model}.
\newblock {SIAM J. Appl. Dyn. Syst.} 10, 921--959 (2011)

\bibitem{CapassoSerioMB1978}
Capasso V., Serio G.:
\newblock {A generalization of the Kermack-Mackendric deterministic model}.
\newblock {Math. Biosci.} 42, 43--61 (1978)

\bibitem{Chang2005}
Chang K.-C.:
\newblock {Methods in Nonlinear Analysis}.
\newblock Springer Monographs in Mathematics. Springer-Verlag, Berlin (2005)

\bibitem{ChenGuoMA2003}
Chen X., Guo J.-S.:
\newblock {Uniqueness and existence of traveling waves for discrete quasilinear
  monostable dynamics}.
\newblock {Math. Ann.} 326, 123--146 (2003)

\bibitem{ChenGuoHamelNon2017}
Chen Y.-Y., Guo J.-S., Hamel F.:
\newblock {Traveling waves for a lattice dynamical system arising in a
  diffusive endemic model}.
\newblock {Nonlinearity} 30, 2334--2359 (2017)

\bibitem{CuiSunZhuJDDE2008}
Cui J., Sun Y., Zhu H.:
\newblock {The impact of media on the control of infectious diseases}.
\newblock {J. Dynam. Differential Equations} 20, 31--53 (2008)

\bibitem{DucrotMagalNon2011}
Ducrot A., Magal P.:
\newblock Travelling wave solutions for an infection-age structured epidemic
  model with external supplies.
\newblock {Nonlinearity} 24, 2891--2911 (2011)

\bibitem{ErneuxNicolisPD1993}
Erneux T., Nicolis G.:
\newblock {Propagating waves in discrete bistable reaction diffusion systems}.
\newblock {Physica D} 67, 237--244 (1993)

\bibitem{FangWeiZhaoPRSAMPES2010}
Fang J., Wei J., Zhao X.-Q.:
\newblock {Spreading speeds and travelling waves for non-monotone time-delayed
  lattice equations}.
\newblock {Proc. R. Soc. A-Math. Phys. Eng. Sci.} 466, 1919--1934 (2010)

\bibitem{FuGuoWuJNCA2016}
Fu S.-C., Guo J.-S., Wu C.-C.:
\newblock {Traveling wave solutions for a discrete diffusive epidemic model}.
\newblock {J. Nonlinear Convex Anal.} 17, 1739--1751 (2016)

\bibitem{FuJMAA2016}
Fu S-C.:
\newblock Traveling waves for a diffusive {SIR} model with delay.
\newblock {J. Math. Anal. Appl.} 435, 20--37 (2016)

\bibitem{GuoWuJDE2012}
Guo J.-S., Wu C.-H.:
\newblock {Traveling wave front for a two-component lattice dynamical system
  arising in competition models}.
\newblock {J. Differ. Equ.} 252, 4357--4391 (2012)

\bibitem{Han2019}
Han X., Kloeden P.E.:
\newblock Lattice dynamical systems in the biological sciences.
\newblock In Yin G., Zhang Q. (eds) {Modeling, Stochastic
  Control, Optimization, and Applications}. Springer, Cham (2019)

\bibitem{HeTsaiZAMP2019}
He J., Tsai J.-C.:
\newblock {Traveling waves in the Kermark-McKendrick epidemic model with latent period}.
\newblock {Z. Angew. Math. Phys.} 70: 27, 22 pp. (2019)

\bibitem{HeesterbeekMetzJMB1993}
Heesterbeek J.A.P., Metz J.A.J.:
\newblock {The saturating contact rate in marriage and epidemic models}.
\newblock {J. Math. Biol.} 31, 529--539 (1993)

\bibitem{Hethcote2000}
Hethcote H.W.:
\newblock The mathematics of infectious diseases.
\newblock {SIAM Rev.} 42, 599--653 (2000)

\bibitem{HosonoIlyasMMMAS1995}
Hosono Y., Ilyas B.:
\newblock Traveling waves for a simple diffusive epidemic model.
\newblock {Math. Models Methods Appl. Sci.} 5, 935--966 (1995)

\bibitem{KapralJMC1991}
Kapral R.:
\newblock Discrete models for chemically reacting systems.
\newblock {J. Math. Chem.} 6, 113--163 (1991)

\bibitem{KermackMcKendrickPRLSA1927}
Kermack W., McKendrick A.:
\newblock A contribution to mathematical theory of epidemics.
\newblock {Proc. R. Soc. A-Math. Phys. Eng. Sci.} 115, 700--721 (1927)

\bibitem{KorobeinikovMainiMMB2005}
Korobeinikov A., Maini P.K.:
\newblock Nonlinear incidence and stability of infectious disease models.
\newblock {Math. Med. Biol.} 22, 113--128 (2005)

\bibitem{KorobeinikovBMB2006}
Korobeinikov A.:
\newblock {Lyapunov functions and global stability for SIR and SIRS
  epidemiological models with non-linear transmission}.
\newblock {Bull. Math. Biol.} 68, 615--626 (2006)

\bibitem{LamWangZhangSIAMJMA2018}
Lam K.-Y., Wang X., Zhang T.:
\newblock {Traveling waves for a class of diffusive disease-transmission models
  with network structures}.
\newblock {SIAM J. Math. Anal.} 50, 5719--5748 (2018)

\bibitem{LiXuZhangDCDS2017}
Li W.-T., Xu W.-B., Zhang L.:
\newblock Traveling waves and entire solutions for an epidemic model with
  asymmetric dispersal.
\newblock {Discret. Contin. Dyn. Syst.} 37, 2483--2512 (2017)

\bibitem{LiLiYangAMC2014}
Li Y., Li W.-T., Yang F.-Y.:
\newblock Traveling waves for a nonlocal dispersal {SIR} model with delay and
  external supplies.
\newblock {Appl. Math. Comput.} 247, 723--740 (2014)

\bibitem{LiuLevinIwasaJMB1986}
Liu W.~M., Levin S.~A., Iwasa X.:
\newblock Influence of nonlinear incidence rates upon the behaviour of {SIRS}
  epidemiological models.
\newblock {J. Math. Biol.} 23, 187--204 (1986)

\bibitem{MuroyaKuniyaEnatsuDCDSB2015}
Muroya Y., Kuniya T., Enatsu Y.:
\newblock {Global stability of a delayed multi-group SIRS epidemic model with
  nonlinear incidence rates and relapse of infection}.
\newblock {Discret. Contin. Dyn. Syst. Ser. B} 20, 3057--3091 (2015)

\bibitem{Rudin1976}
Rudin W.:
\newblock {Principles of Mathematical Analysis. 3nd Edition}.
\newblock International Series in Pure and Applied Mathematics. McGraw-Hill,
  New York (1976)

\bibitem{Rudin1991}
Rudin W.:
\newblock {Functional Analysis. 2nd Edition}.
\newblock International Series in Pure and Applied Mathematics. McGraw-Hill,
  New York (1991)

\bibitem{SanWangJMAA2019}
San X.F., Wang Z.-C.:
\newblock Traveling waves for a two-group epidemic model with latent period in
  a patchy environment.
\newblock {J. Math. Anal. Appl.} 475, 1502--1531 (2019)

\bibitem{ShuPanWangWuJDDE2019}
Shu H., Pan X., Wang X.-S., Wu J.:
\newblock {Traveling waves in epidemic models: non-monotone diffusive systems
  with non-monotone incidence rates}.
\newblock {J. Dynam. Differential Equations} 31, 883--901 (2019)

\bibitem{ThiemeJDE2011}
Thieme H.R.:
\newblock {Global stability of the endemic equilibrium in infinite dimension:
  Lyapunov functions and positive operators}.
\newblock {J. Differ. Equ.} 250, 3772--3801 (2011)

\bibitem{TianYuanSCM2017}
Tian B., Yuan R.:
\newblock Traveling waves for a diffusive {SEIR} epidemic model with standard
  incidences.
\newblock {Sci. China Math.} 60, 813--832 (2017)

\bibitem{WangMaDCDSB2018}
Wang W., Ma W.:
\newblock {Global dynamics and travelling wave solutions for a class of
  non-cooperative reaction-diffusion systems with nonlocal infections}.
\newblock {Discret. Contin. Dyn. Syst. Ser. B} 23, 3213--3235 (2018)

\bibitem{WengHuangWuIMAJAM2003}
Weng P., Huang H., Wu J.:
\newblock Asymptotic speed of propagation of wave fronts in a lattice delay
  differential equation with global interaction.
\newblock {IMA J. Appl. Math.} 68, 409--439 (2003)

\bibitem{Widder1941}
Widder D.V.:
\newblock {The Laplace Transform}.
\newblock Princeton Mathematical Series 6, Princeton University
Press, Princeton (1941)

\bibitem{WuJDE2017}
Wu C.-C.:
\newblock {Existence of traveling waves with the critical speed for a discrete
  diffusive epidemic model}.
\newblock {J. Differ. Equ.} 262, 272--282 (2017)

\bibitem{WuWengRuanEJAM2015}
Wu S., Weng P., Ruan S.:
\newblock {Spatial dynamics of a lattice population model with two age classes
  and maturation delay}.
\newblock {Euro. J. Appl. Math.} 26, 61--91 (2015)

\bibitem{XiaoRuanMB2007}
Xiao D., Ruan S.:
\newblock {Global analysis of an epidemic model with a nonlinear incidence
  rate}.
\newblock {Math. Biosci.} 208, 419--429 (2007)

\bibitem{XiaoZhouCAMQ2006}
Xiao D., Zhou Y.:
\newblock {Qualitative analysis of an epidemic model}.
\newblock {Can. Appl. Math. Q} 14, 469--492 (2006)

\bibitem{xu2009global}
Xu R., Ma Z.:
\newblock Global stability of a {SIR} epidemic model with nonlinear incidence
  rate and time delay.
\newblock {Nonlinear Anal. Real World Appl.} 10, 3175--3189 (2009)

\bibitem{XuGuoMMAS2019}
Xu Z., Guo T.:
\newblock Traveling waves in a diffusive epidemic model with criss-cross
  mechanism.
\newblock {Math. Meth. Appl. Sci.} 42, 2892--2908 (2019)

\bibitem{YangLiLiWangDCDSB2013}
Yang F.-Y., Li Y., Li W.-T., Wang Z.-C.:
\newblock Traveling waves in a nonlocal dispersal Kermack-McKendrick epidemic model
  equation with monostable convolution type nonlinearity.
\newblock {Discret. Contin. Dyn. Syst. Ser. B} 18, 1969--1993 (2013)

\bibitem{YangZhangSCM2018}
Yang Z., Zhang G.:
\newblock Stability of non-monotone traveling waves for a discrete diffusion
  equation with monostable convolution type nonlinearity.
\newblock {Sci. China Math.} 61, 1789--1806 (2018)

\bibitem{ZhangWuIJB2019}
Zhang Q., Wu S.-L.:
\newblock {Wave propagation of a discrete SIR epidemic model with a saturated
  incidence rate}.
\newblock {Int. J. Biomath.} 12, 1950029, 18 pp (2009)

\bibitem{ZhangXuAMC2013}
Zhang S., Xu R.:
\newblock {Travelling waves and global attractivity of an SIRS disease model
  with spatial diffusion and temporary immunity}.
\newblock {Appl. Math. Comput.} 224, 635--651 (2013)

\bibitem{zhang2018behavior}
Zhang Y., Li Y., Zhang Q., Li A.:
\newblock Behavior of a stochastic {SIR} epidemic model with saturated
  incidence and vaccination rules.
\newblock {Physica A} 501, 178--187 (2018)

\bibitem{ZhaoWangRuanNonlinearity2017}
Zhao L., Wang Z.-C., Ruan S.:
\newblock Traveling wave solutions in a two-group epidemic model with latent
  period.
\newblock {Nonlinearity} 30, 1287--1325 (2017)

\bibitem{ZhouSongWeiJDE2019}
Zhou J., Song L., Wei J.:
\newblock Mixed types of waves in a discrete diffusive epidemic model with nonlinear incidence and time delay.
\newblock {J. Differ. Equ.} Doi:10.1016/j.jde.2019.10.034
\end{thebibliography}
\end{document}